\newtheorem{thm}{Theorem}[section]
\newtheorem{prop}[thm]{Proposition}
\newtheorem{lem}[thm]{Lemma}
\theoremstyle{definition}
\newtheorem{ass}[thm]{Assumptions}
\theoremstyle{remark}
\numberwithin{equation}{section}
\newcommand\al{\alpha}
\newcommand\bt{\beta}
\newcommand\Gm{\Gamma}
\newcommand\Gf{\Gamma} 
\newcommand\gm{\gamma}
\newcommand\Dt{\Delta}
\newcommand\dt{\delta}
\newcommand\e{\varepsilon}
\newcommand\z{\zeta}
\renewcommand\th{\vartheta}
\renewcommand\k{\kappa}
\newcommand\Ld{\Lambda}
\newcommand\ld{\lambda}
\newcommand\x{\xi}
\newcommand\s{\sigma}
\newcommand\Ph{\Phi}
\newcommand\ph{\varphi}
\newcommand\ch{\chi}
\newcommand\ps{\psi}
\newcommand\om{\omega}
\newcommand\Om{\Omega}
\newcommand\CC{\mathbb{C}}
\newcommand\NN{\mathbb{N}}
\newcommand\QQ{\mathbb{Q}}
\newcommand\RR{\mathbb{R}}
\newcommand\ZZ{\mathbb{Z}}
\renewcommand\={\;=\;}
\newcommand\isdef{:=}
\renewcommand\setminus{\smallsetminus}
\newcommand\PSL{{\mathrm{PSL}}}
\newcommand\GL{{\mathrm{GL}}}
\newcommand\PGL{{\mathrm{PGL}}}
\newcommand\uhp{{\mathbb H}}
\newcommand\oh{{\mathrm{O}}}
\newcommand\re{\mathrm{Re}\,}
\newcommand\im{\mathrm{Im}\,}
\newcommand\sign{\,\mathrm{Sign}\,}
\newcommand\mf{{\mathsf{Maass}}}
\newcommand\proj[1]{\mathbb{P}^1_{#1}}
\newcommand\zssmb{\mathsf{Selb}}
\newcommand\zspec{\zssmb_{\mathrm{cu}}}
\newcommand\dscm{{\mathbf D}}
\newcommand\escm{{\mathbf E}}
\newcommand\scm{\mathbf{C}}
\newcommand\hypg[2]{{}_{#1}\!F_{\!#2}}
\newcommand\eis{\mathsf{Eis}}
\newcommand\poinc{\mathsf{Poinc}}
\newcommand\four[1]{\mathbf{F}_{\!#1}}
\newcommand\R{\mathbf{R}}
\newcommand\mum{\mathbf{m}}
\newcommand\omm{\mathbf{w}}
\newcommand\lmm{\mathbf{n}}
\newcommand\Id{\mathbf{I}}
\newcommand\Jm{\mathbf{J}}
\newcommand\Pm{\mathbf{P}}
\newcommand\Um{\mathbf{U}}
\newcommand\Vm{\mathbf{V}}
\newcommand\Wm{\mathbf{W}}
\newcommand\txtfrac[2]{{\textstyle\frac{#1}{#2}}}
\newcommand\be{\begin{equation}}
\newcommand\ee{\end{equation}}
\newcommand\bad{\be\begin{aligned}}
\newcommand\badl[1]{\be\label{#1}\begin{aligned}}
\newcommand\ead{\end{aligned}\ee}
\newcommand\eadl{\ead}
\newcommand\matc[4]{\left[ {#1\@@atop #3}{#2\@@atop #4}\right]}
\newcommand\matr[4]{\left[ {\hfill #1\@@atop\hfill #3}{\hfill
#2\@@atop\hfill #4}\right]}
\newcommand\rmatc[4]{\left( {#1\@@atop #3}{#2\@@atop #4}\right)}
\newcommand\rmatr[4]{\left( {\hfill #1\@@atop\hfill #3}{\hfill
#2\@@atop\hfill #4}\right)}
\newcommand\brdt{11.8}
\newcommand\grtt{\brdt\unitlength}
\newcounter{fact}
\renewcommand\thefact{\textbf{F\arabic{fact}}}
\long\def\fact#1\endfact{\smallskip\par\noindent\refstepcounter{fact}%
\textbf{\thefact. }{\it#1}\smallskip\par}
\begin{document}
\title[Zeros of the Selberg zeta-function]{Perturbation of zeros of
the Selberg zeta-function for $\Gm_0(4)$}

\author{Roelof Bruggeman}
\address{Mathematisch Instituut Universiteit Utrecht, Postbus 80010,
NL-3508 TA Utrecht, Nederland}
\email{r.w.bruggeman@uu.nl}

\author{Markus Fraczek}
\address{Institut f\"ur Theoretische Physik, Technische Universit\"at
Clausthal, Arnold-Sommerfeld-Stra\ss e~6, 38678 Clausthal-Zellerfeld,
Deutschland}
\email{Dieter.Mayer@tu-clausthal.de}

\author{Dieter Mayer}
\address{Institut f\"ur Theoretische Physik, Technische Universit\"at
Clausthal, Arnold-Sommerfeld-Stra\ss e~6, 38678 Clausthal-Zellerfeld,
Deutschland}
\email{markus.fraczek@tu-clausthal.de}

\begin{abstract}We study the asymptotic behavior of zeros of the
Selberg zeta-function for the congruence subgroup $\Gm_0(4)$ as a
function of a one-parameter family of characters tending to the
trivial character. The motivation for the study comes from
observations based on numerical computations. Some of the observed
phenomena lead to precise theorems that we prove and compare with the
original numerical results.
\end{abstract}

\subjclass{11M36, 
11F72, 
37C30 
}

\maketitle

\section*{Introduction} This paper presents computational and
theoretical results concerning zeros of the Selberg zeta-function.
The second named author shows in~\cite{Fr} that it is possible to use
the transfer operator to compute in a precise way zeros of the
Selberg zeta-function, and carries out computations for $\Gm_0(4)$
for a one-parameter family of characters. The results show how zeros
of the Selberg zeta-function follow curves in the complex plane
parametrized by the character. In this paper we observe several
phenomena in the behavior of the zeros as the character approaches
the trivial character. Motivated by these observations we formulate a
number of asymptotic results for these zeros, and prove these results
with the spectral theory of automorphic forms. These asymptotic
formulas predict certain aspects of the behavior of the zeros more
precisely than we guessed from the data. We compare these predictions
with the original data. In this way our paper forms an example of
interaction between experimental and theoretical mathematics.

Selberg shows in~\cite{Se90} that for the group $\Gm_0(4)$ and a
specific one-parameter family of characters, the Selberg
zeta-function not only has countably many zeros on the central line
$\re\bt=\frac12$, but has also many zeros in the spectral plane
situated on the left of the central line, the so-called resonances.
Both type of zeros change when the character changes. As the
character approaches the trivial character the resonances tend to
points on the lines $\re \bt=\frac12$ or $\re\bt=0$, or to the
 non-trivial zeros of $\z(2\bt)=0$, so presumably to points on the
line $\re \bt=\frac14$. Many of these zeros have a real part tending
to $-\infty$ as the parameter of the character approaches other
specific values.

In this paper we focus on zeros on or near the central line
$\re \bt=\frac12$, and consider their behavior as the character
approaches the trivial character.

In Section~\ref{sect-res} we describe observations in the results of
the computations. We state the theoretical results, and compare
predictions with the observations in the computational results. The
approach of Fraczek is based on the use of a transfer operator, which
makes it possible to consider eigenvalues and resonances in the same
way. See \S7.4 in~\cite{Fr}.

In Section~\ref{sect-prfs} we give a short list of facts from the
spectral theory of automorphic forms, and give the proofs of the
statements in \S\ref{sect-res}.

In Section~\ref{sect-spth} we recall the required results from
spectral theory, applied to the group~$\Gm_0(4)$. Not all of the
facts needed in~\S\ref{sect-prfs} are readily available in the
literature, some facts need additional arguments in the present
situation. The spectral theory that we apply uses Maass forms with a
bit of exponential growth at the cusps. In this way it goes beyond
the classical spectral theory, which considers only Maass forms with
at most polynomial growth. We close \S\ref{sect-spth} with some
further remarks on the method and on the interpretation of the
results.
\medskip

The first named author thanks D.\,Mayer for several invitations to
visit Clausthal, and thanks the Volkswagenstiftung for the provided
funds.

\section{Discussion of results}\label{sect-res}
The congruence subgroup $\Gm_0(4) $ consists of the elements
$\matc abcd\in \PSL_2(\ZZ)$ with $c\equiv 0\bmod 4$. By $\matc abcd$
we denote the image in $\PGL_2(\RR)$ of $\rmatc abcd\in \GL_2(\RR)$.
The group $\Gm_0(4)$ is free on the generators $\matc1101$ and
$\matr10{-4}1$. A family $\al\mapsto \ch_\al$ of characters
parametrized by~$\al \in \CC \bmod \ZZ$ is determined by
\be \ch_\al \biggl( \matc1101\biggr) \= e^{2\pi i \al}\,,\quad
\ch_\al\biggl( \matr10{-4}1\biggr)\= 1\,. \ee
The character is unitary if $\al\in \RR\bmod \ZZ$.
This is the family
of characters of $\Gm_0(4)$ used in \cite{Fr}. See especially
\S8.1.3. Up to conjugation and differences in parametrization, this
is the family of characters considered by Selberg in \S3
of~\cite{Se90}, and by Phillips and Sarnak in \cite{PS92}
and~\cite{PS94}.

For a unitary character $\ch$ of a cofinite discrete group~$\Gm$ the
Selberg zeta-function $Z(\Gm,\ch;\cdot)$
is a meromorphic function on~$\CC$ with both geometric and spectral
relevance. As a reference we mention \cite{He83}, Chapter~X, \S2
and~\S5. One may also consult \cite{Fi}, or Chapter~7 of~\cite{Ve90}.

The geometric significance is clear from the product representation
\be Z(\Gm,\ch;\bt) \= \prod_{k\geq 0}\prod_{\{\gm\}} \bigl( 1-\ch(\gm)
\, e^{-(\bt+k)\ell(\gm)}\bigr) \qquad(\re \bt>1)\,, \ee
where $k$ runs over integers and $\gm$ over representatives of
primitive hyperbolic conjugacy classes. By $\ell(\gm)$ is denoted the
length of the associated closed geodesic. This geometric aspect is
used in the investigations in~\cite{Fr}. By means of a transfer
operator, Fraczek is able to compute zeros of the Selberg zeta
function for $\Gm_0(4)$ as a function of the character $\ch_\al$.

\begin{figure}
\begin{center}
\renewcommand\brdt{5.6}
\makebox[\grtt]{\epsfxsize=\grtt\epsffile{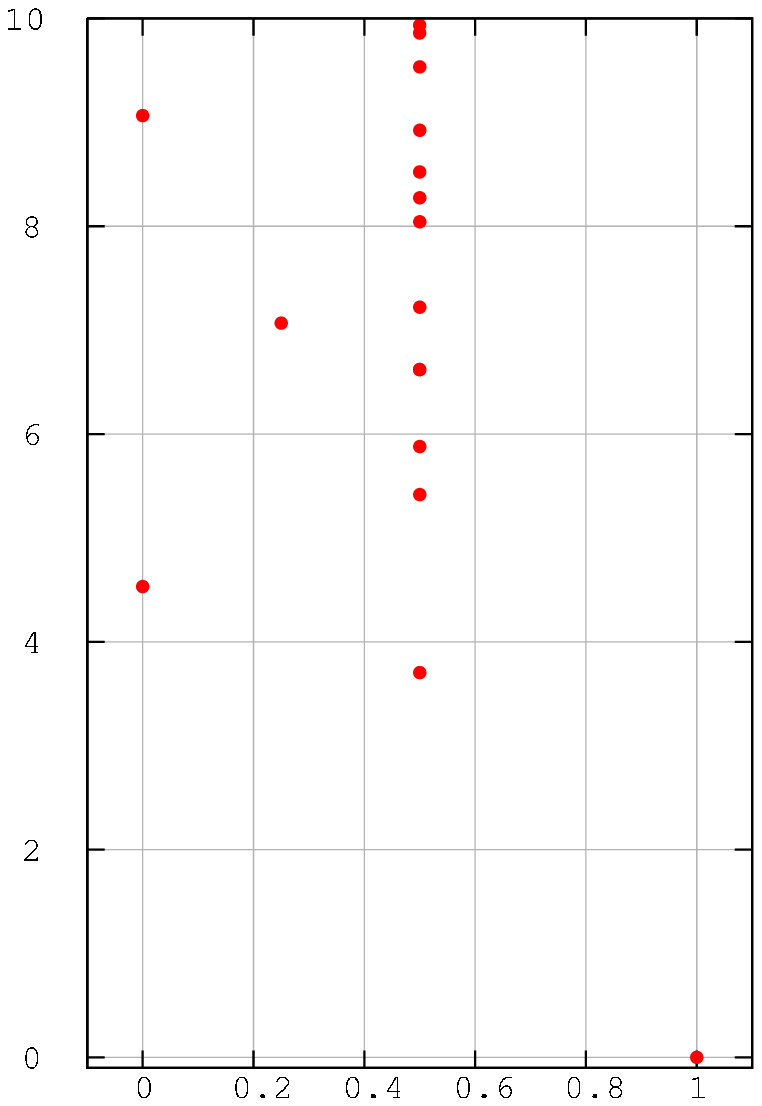}}
\hspace{\fill}
\makebox[\grtt]{\epsfxsize=\grtt\epsffile{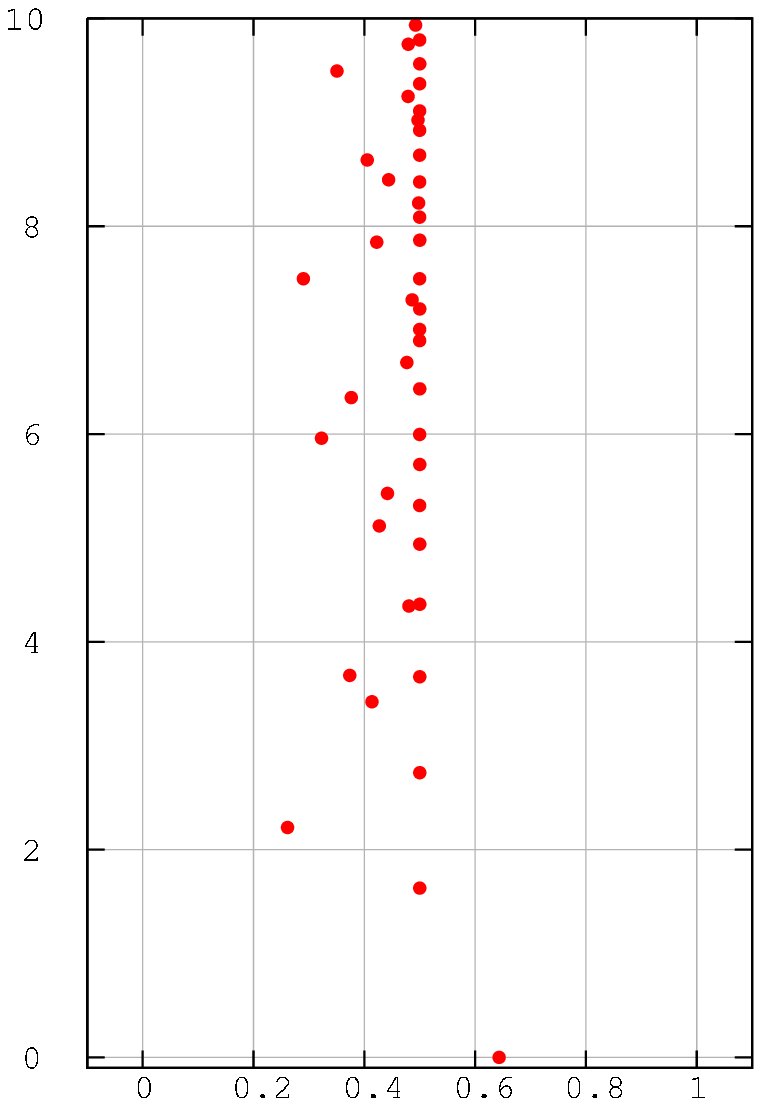}}
\end{center}
\caption{Zeros of the Selberg zeta-function
$Z(\al,\cdot) = Z(\ch_\al,\cdot)$
in the region $[0,1]\times i[0,10]$ of the spectral plane. On the left
for $\al=0$, on the right for $\al=\frac1{10}$.}\label{fig-zeros}
\end{figure}
Via the Selberg trace formula, the zeros of function
$Z(\Gm,\ch;\cdot)$
are related to automorphic forms. This is the relation that we use in
Sections \ref{sect-prfs} and~\ref{sect-spth} for our theoretical
approach.

We denote by $Z(\al,\bt)$ the Selberg zeta-function
$\bt\mapsto Z\bigl(\Gm_0(4),\ch_\al;\bt)$ for $\al\in \RR$. We
consider its zeros in the region $\im \bt>0$.\medskip

For each value of~$\al\in \RR$ the zeros of $Z(\al,\cdot)$ form a
discrete set. In Figure~\ref{fig-zeros} we give the non-trivial zeros
of $ Z(\al,\cdot)$ in the region $[0,1]\times i[0,10]$ in the
$\bt$-plane for the trivial character, $\al=0$ (Table~D.1
in~\cite{Fr}), and the nearby value $\al=\frac1{10}$
(interpolation of data discussed in \S8.2 of~\cite{Fr}). In the
unperturbed situation, $\al=0$, the zeros to the left of the central
line, the \emph{resonances}, are known to occur at the zeros of
$\z(2\bt)$, of which only one falls within the bounds in the figure.
There are also zeros at points $\frac{\pi  i \ell}{\log 2}$ with
$\ell\in \ZZ$.

We call zeros $\bt$ of $Z(\al,\cdot)=0$ with $\re \bt=\frac12$
\emph{eigenvalues}, although we will see in \S\ref{sect-mf} that
$\bt-\bt^2$ qualifies better for that name. The lowest unperturbed
eigenvalue is $.5 + 3.70331 \, i$. Perturbation to $\al=\frac1{10}$
gives a more complicated set of zeros, many of which are eigenvalues.

In~\cite{Fr}, \S8.2, it is explained how zeros are followed as a
function of the parameter. They follow curves that either stay on the
central line, or move to the left of the central line and touch the
central line only at some points.

\subsection{Curves of eigenvalues}\label{sect-cei}To exhibit curves of
zeros of the Selberg zeta-func\-tion on the central line
$\re\bt=\frac12$ we plot $\im \bt$ as a function of~$\al$.
\begin{figure}
\begin{center}
\renewcommand\brdt{9.2}
\makebox[\grtt]{\epsfxsize=\grtt\epsffile{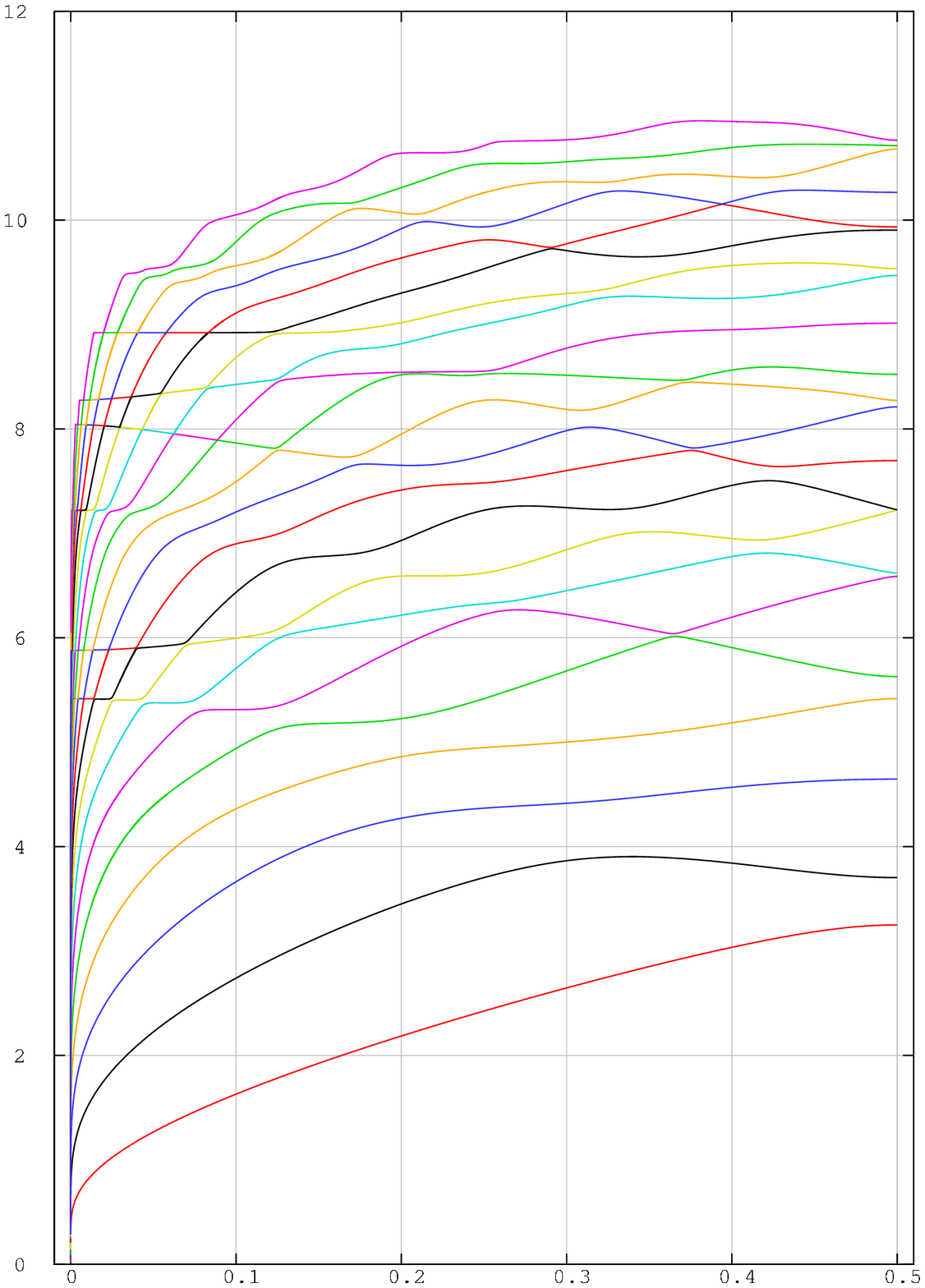}}
\end{center}
\caption{Zeros of $Z(\al,\bt)$ with $0<\al\leq \frac12$, $\bt\in
\frac12+i(0,10)$. Horizontal: $\al$; vertical:
$\im \bt$.}\label{fig-spectr}
\end{figure}
The curves in Figure~\ref{fig-spectr} were obtained in~\cite{Fr}, by
first determining for the arithmetical cases
$\al\in\bigl\{\frac18,\frac14,\frac38,\frac12\}$ all zeros in a
region of the form $\frac12+i[0,T]$. Here we display only those zeros
which stay on the central line $\re \beta =\frac12$. The computations
suggest that all these zeros go to $\bt=\frac12$ as
$\al\downarrow 0$, along curves that are almost vertical for small
values of~$\alpha$. Our first result confirms this impression, and
makes it more precise:
\begin{thm}\label{thm-ei-0}
For each integer $k\geq 1$ there are $\z_k\in (0,1]$ and a
real-analytic map $\tau_k:(0,\z_k)\rightarrow (0,\infty)$ such that
$Z\bigl( \al,\frac12+\nobreak i \tau_k(\al)\bigr)=0$ for all
$\al\in (0,\z_k)$.

For each $k\geq 1$
\be\label{as-0} \tau_k(\al) \= \frac{\pi k}{-\log(\pi^2\al/4)} +
\oh\Bigl( \frac{k^2}{(\log\al)^3}\Bigr)\qquad \text{as }\al\downarrow
0\,. \ee
\end{thm}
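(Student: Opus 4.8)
The plan is to use the spectral theory of Section~\ref{sect-spth} to rewrite the condition ``$Z(\al,\tfrac12+it)=0$ for some $t>0$'' as an explicit transcendental equation, and then to analyse that equation near $\bt=\tfrac12$ as $\al\downarrow0$ with the analytic implicit function theorem. For $\al\in(0,1)$ the cusp~$0$ of $\Gm_0(4)$ is singular (the generator $\matr10{-4}1$ of its stabilizer is sent to~$1$), whereas the cusps~$\infty$ and~$\tfrac12$ are not (the character takes the value $e^{2\pi i\al}$ on generators of their stabilizers). A zero $\bt=\tfrac12+it$, $t>0$, of $Z(\al,\cdot)$ corresponds to a Maass form $u$ for $(\Gm_0(4),\ch_\al)$ of eigenvalue $\bt(1-\bt)=\tfrac14+t^2$: its zero Fourier term at the cusp~$0$ vanishes, while at each non-singular cusp its relevant (frequency $\al$) Fourier term is forced to be a multiple of the exponentially decaying solution, which at the cusp~$\infty$ is
\be\label{om-def}
\om_\bt(z)\=\sqrt{\im z}\;K_{\bt-1/2}\!\bigl(2\pi\al\,\im z\bigr)\,e^{2\pi i\al\,\re z}
\ee
and at the cusp~$\tfrac12$ is the analogous expression in the cusp coordinate. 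Since $K_\nu(x)=\tfrac12\Gm(\nu)(x/2)^{-\nu}+\tfrac12\Gm(-\nu)(x/2)^\nu+\oh(x^{3/2})$ as $x\downarrow0$, uniformly for $\nu$ near~$0$, the function $\om_\bt$ behaves, for bounded $\al\,\im z$, like a combination of $(\im z)^\bt$ and $(\im z)^{1-\bt}$ whose coefficients of $(\im z)^{1-\bt}$ and $(\im z)^\bt$ stand in the fixed ratio $\Gm(\bt-\tfrac12)\,\Gm(\tfrac12-\bt)^{-1}(\pi\al)^{1-2\bt}$; in particular all of the $\al$-dependence of the eigenvalue condition near $\bt=\tfrac12$ sits in this factor.

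Matching \eqref{om-def} across the rest of the surface --- where, by the facts recalled in Section~\ref{sect-spth}, the behaviour is controlled by the scattering matrix of $(\Gm_0(4),\ch_\al)$ and is \emph{holomorphic} at $\bt=\tfrac12$ --- turns the existence of~$u$ into a scalar equation
\be\label{res-eq}
\frac{\Gm(\tfrac12-\bt)}{\Gm(\bt-\tfrac12)}\,(\pi\al)^{2\bt-1}\=\Ps_\al(\bt)\,,
\ee
in which $\Ps_\al$ is real-analytic in $(\al,\bt)$ for $\al>0$, holomorphic in~$\bt$ near $\bt=\tfrac12$, satisfies $\Ps_\al=\Ps_0+\oh(\al)$ there with $\Ps_0$ holomorphic and non-vanishing, and obeys $\Ps_0(\tfrac12)=-1$ (as one sees by letting $\bt\to\tfrac12$ in~\eqref{res-eq}, since $\Gm(\tfrac12-\bt)/\Gm(\bt-\tfrac12)\to-1$). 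Two points require care. First, there are two non-singular cusps, not one; they are interchanged by a symmetry of $(\Gm_0(4),\ch_\al)$ --- coming from the $S_3$-action on $\Gm(2)\backslash\uhp\cong\Gm_0(4)\backslash\uhp$, combined with complex conjugation because that action carries $\ch_\al$ to $\overline{\ch_\al}$ --- and passing to the eigenspaces of the induced involution reduces the matching, in the relevant eigenspace, to the single equation~\eqref{res-eq}; moreover, since the Eisenstein series attached to the cusp~$0$ lies in the other eigenspace, the relevant forms lie in the subspace carrying no continuous spectrum, which is why the Fermi golden rule obstruction to persistence vanishes and the zeros genuinely stay on the central line for a whole interval of~$\al$. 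Second, one must check that the scattering matrix of $\Gm_0(4)$ itself has no singularity at $\bt=\tfrac12$; this holds because the pole there of $\Gm(\bt-\tfrac12)$ is cancelled by the simple zero of $1/\z(2\bt)$.

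Now put $\bt=\tfrac12+it$ with $t>0$ small. For $\al>0$ both sides of~\eqref{res-eq} are unimodular, and, using $\Gm(it)/\Gm(-it)=-\Gm(1+it)/\Gm(1-it)$, comparison of arguments turns~\eqref{res-eq} into a real equation
\be\label{phase-eq}
2t\,(-\log\al)\=2\pi k-\theta_\al(t)\,,\qquad k\in\ZZ\,,
\ee
with $\theta_\al$ real-analytic in $(\al,t)$ for $\al>0$, $t$ near~$0$, and $\theta_\al=\theta_0+\oh(\al)$ there; the Taylor expansions at $t=0$ of $\Gm(1\pm it)$ and of the $\Gm$-, power-of-$2$, power-of-$\pi$ and $\z$-factors making up $\Ps_\al$ give $\theta_0(0)=0$ and $\theta_0'(0)=-2\log(\pi^2/4)$. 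The left side of~\eqref{phase-eq} is strictly increasing in~$t$ with slope $2(-\log\al)\to\infty$, so for each fixed $k\geq1$ and all sufficiently small $\al$ the equation has a unique solution $t=\tau_k(\al)$ near~$0$; it lies in $(0,\infty)$, and since $\partial_t\bigl(2t(-\log\al)+\theta_\al(t)\bigr)\neq0$ it depends real-analytically on~$\al$ by the analytic implicit function theorem. This gives the first assertion, with $\z_k\in(0,1]$ small enough. For the asymptotics, \eqref{phase-eq} first gives $\tau_k(\al)=\oh(k/|\log\al|)$; substituting $\theta_\al(\tau_k)=\theta_\al(0)+\theta_\al'(0)\,\tau_k+\oh(\tau_k^2)$ back into~\eqref{phase-eq}, and using $2(-\log\al)+\theta_\al'(0)=-2\log(\pi^2\al/4)+\oh(\al)$ together with $\theta_\al(0)=\oh(\al)$, one obtains
\[
\tau_k(\al)\=\frac{\pi k}{-\log(\pi^2\al/4)}+\oh\Bigl(\frac{k^2}{(\log\al)^3}\Bigr)\,,
\]
the error term coming from the quadratic remainder of~$\theta_\al$; this is~\eqref{as-0}.

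The heart of the proof --- and its main obstacle --- is the first half: establishing~\eqref{res-eq} via the refined spectral theory of Section~\ref{sect-spth} (that on $\re\bt=\tfrac12$ the zeros of $Z(\al,\cdot)$ are exactly these eigenvalues; that $\Ps_\al$ is the scattering quantity described, holomorphic near $\bt=\tfrac12$, with $\Ps_0(\tfrac12)=-1$) and, in order to pin down the constant $\pi^2/4$ in~\eqref{as-0}, a careful expansion at $\bt=\tfrac12$ of every $\Gm$-, power-of-$2$, power-of-$\pi$ and $\z$-factor of that scattering matrix together with the small-argument expansion of $K_\nu$ in~\eqref{om-def}. Once~\eqref{res-eq} is established, the implicit function theorem and the bootstrap above are routine.
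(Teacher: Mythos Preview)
Your approach is essentially the paper's: your scattering quantity $\Ps_\al$ is the paper's $1/C_-$, your equation~\eqref{res-eq} is the paper's $X\,C_-=1$ rewritten, and your phase equation~\eqref{phase-eq} is exactly the paper's equation $2t\log\pi\al=A_-(\al,\tfrac12+it)-2\pi k$, after which both arguments run the same implicit-function-theorem and bootstrap computation. One small slip: you cannot read off $\Ps_0(\tfrac12)=-1$ ``by letting $\bt\to\tfrac12$ in~\eqref{res-eq}'' since~\eqref{res-eq} is the eigenvalue \emph{condition}, not an identity; the value comes instead from the explicit unperturbed scattering matrix~\eqref{sm0-expl}, via $C_-(0,\bt)=\frac{\Ld(2\bt-1)}{\Ld(2\bt)}\,\frac{2^{2-2\bt}-1}{2^{2\bt}-1}$, which gives $C_-(0,\tfrac12)=-1$ and hence the Taylor coefficient $2\log\frac4\pi$ that produces the constant $\pi^2/4$.
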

So there are infinitely many curves of zeros going down as
$\al\downarrow0$, and for each curve the quantity
$-\pi^{-1}\, \im \bt \, \log\frac{\pi^2\al}4$ tends to an integer.
Figure~\ref{fig-asm0} shows that the computational data confirm the
asymptotic behavior in~\eqref{as-0}.
\begin{figure}
\begin{center}
\renewcommand\brdt{7.4}
\makebox[\grtt]{\epsfxsize=\grtt\epsffile{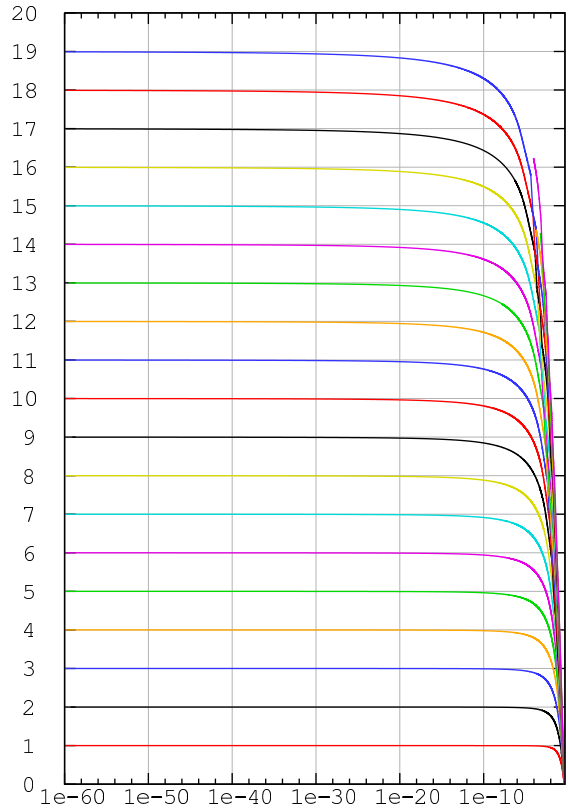}}
\end{center}
\caption{On the vertical axis the quantity
$-\pi^{-1} \, \im \bt \, \log\frac{\pi^2\al}4$ is given for those
curves in the data set of~\protect\cite{Fr} for which $\bt$ goes to~$\frac12$
along the central line as $\al\downarrow0$. The horizontal axis gives
$\al$ on a logarithmic scale. So $\al\downarrow0$ means going to the
left in the graph.}\label{fig-asm0}
\end{figure}
The theorem does not state that all zeros of the Selberg zeta-function
on the central line occur in these families. The spectral theory of
automorphic forms allows the possibility that there are other
families.
\smallskip

Figure~\ref{fig-spectr} shows also a regular behavior near many parts
on the central line. By theoretical means we obtain:
\begin{thm}\label{thm-ei-ln}Let $I\subset (0,\infty)$ be a bounded
closed interval such that the interval $\frac12+i\, I$ on the central
line does not contain zeros of the unperturbed Selberg zeta-function
$Z(0,\cdot)$.

Let for $t\geq 0$
\be\label{ph-def} \ph_-(t) \= \arg
\pi^{2it}\;\frac{2^{1+2it}-1}{2^{1-2it}-1}\;\frac{\z(-2it)}{\z(2it)}\,,
\ee
with $\z$ the Riemann zeta-function, be the continuous choice of the
argument that takes the value~$0$ for $t=0$.

For all sufficiently large $k\in \ZZ$ there is a function
$a_k:I\rightarrow(0,1)$ inverting on~$I$ the function $\tau_k$ of the
previous theorem: $\tau_k\bigl( a_k(t))=t$ for $t\in I$. Uniformly
for $t\in I$ we have
\be \label{alk-as}
a_k(t) \= \pi^{-1} e^{\ph_-(1/2+it)/2t+\pi k_I/2t}\, e^{-\pi k /t}
\,\Bigl( 1+ \oh\bigl(e^{-\pi k/t}\bigr)\Bigr)
\qquad(k\rightarrow\infty)\,, \ee
for some $k_I \in \ZZ$.
\end{thm}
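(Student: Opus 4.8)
\emph{Strategy.} The plan is to promote the transcendental equation behind Theorem~\ref{thm-ei-0} by one further order of precision. The spectral input comes from Section~\ref{sect-spth}: for $\al\neq0$ only the cusp~$0$ of $\Gm_0(4)$ remains singular for $\ch_\al$, the cusps $\infty$ and $\frac12$ become ``nearly singular'', and the zeros of $Z(\al,\cdot)$ near the central line are the resonances, which in the extended spectral theory (Maass forms with a bit of exponential growth at $\infty$ and $\frac12$) are genuine eigenvalues counted by zeros of $Z(\al,\cdot)$. Writing $\bt=\frac12+it$ with $t$ real and expanding the corresponding eigenfunction --- equivalently the meromorphically continued Eisenstein series attached to the cusp~$0$ --- in its $\al$-th Fourier mode at the cusp $\infty$, one meets the $K$-Bessel function $\sqrt y\,K_{it}(2\pi\al y)$, whose small-argument expansion furnishes the two asymptotic types $(\pi\al)^{-it}y^{1-\bt}$ and $(\pi\al)^{it}y^{\bt}$. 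Balancing these against the scattering and Fourier data at the cusps $0$ and $\frac12$ leads to a secular equation of the shape
\badl{sec}
(\pi\al)^{2it} &\= \k\,e^{i\ph_-(\frac12+it)}\,\bigl(1+\oh(\al)\bigr)\\
&\= \k\,\pi^{2it}\,\frac{2^{1+2it}-1}{2^{1-2it}-1}\,\frac{\z(-2it)}{\z(2it)}\,\bigl(1+\oh(\al)\bigr)
\eadl
as $\al\downarrow0$, uniformly for $t$ in a neighbourhood of $I$. Here $2it=2\bt-1$; the second equality is just~\eqref{ph-def} with the normalisation $\ph_-(0)=0$ (and $\ph_-(\frac12+it):=\ph_-(t)$ in the notation there); the zeta values arise from the Dirichlet series $\sum_{4\mid c}(\cdots)\,c^{-2\bt}$ for $\Gm_0(4)$ after applying the functional equation of~$\z$; the powers $2^{2\bt}=2^{1+2it}$ and $2^{2(1-\bt)}=2^{1-2it}$ come from the local data at the prime~$2$ and the scaling matrix of the cusp~$0$; the Gamma factors of the Bessel functions and of the $\z$-functional equation cancel up to the sign $\k\in\{1,-1\}$, which the local analysis need not determine. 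The hypothesis that $\frac12+i\,I$ contains no zero of $Z(0,\cdot)$ is used precisely to make \eqref{sec} a \emph{regular} $\oh(\al)$-perturbation of its $\al\to0$ limit, uniformly on $I$: no cuspidal zero of $Z(0,\cdot)$ dissolves over $I$ and introduces an extra near-singular factor.

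\emph{Solving the equation.} Both sides of \eqref{sec} have modulus $\approx1$; taking logarithms yields, for some $\ell\in\ZZ$,
\be 2it\log(\pi\al)\= i\,\ph_-(\tfrac12+it)+i\pi k_I+2\pi i\,\ell+\oh(\al), \ee
where the integer $k_I$ absorbs the sign $\k$ and the choice of branches --- it cannot be read off from the behaviour of $Z(0,\cdot)$ on $I$ alone, only the winding of $\ph_-$ along the whole positive axis would fix it. Dividing by $2it$ and putting $k=-\ell$, so that $\al\downarrow0$ corresponds to $k\to+\infty$,
\be \log\al\= -\log\pi+\frac{\ph_-(\tfrac12+it)}{2t}+\frac{\pi k_I}{2t}-\frac{\pi k}{t}+\oh(\al); \ee
since $\al\asymp e^{-\pi k/t}$ on $I$, the remainder $\oh(\al)$ is $\oh(e^{-\pi k/t})$, and exponentiating produces \eqref{alk-as} uniformly on $I$. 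Thus \eqref{sec} characterises the zeros of $Z(\al,\cdot)$ on $\frac12+i\,I$ for small $\al$ as a family of real-analytic branches $t\mapsto a_k(t)$ indexed by the winding number $k$, decreasing to $0$ as $k\to\infty$; each curve $\tau_k$ of Theorem~\ref{thm-ei-0}, restricted to heights in $I$, is one of these branches (with indices matched up to a fixed shift, which merely changes $k_I$ by an even integer, since the main term $\pi k/(-\log(\pi^2\al/4))$ of~\eqref{as-0} makes the ordering in $k$ unambiguous), so $a_k:=\tau_k^{-1}|_I$ exists for all large $k$ and obeys~\eqref{alk-as}.

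\emph{Main obstacle.} Everything past \eqref{sec} is bookkeeping; the real work, carried out in Section~\ref{sect-spth}, is to establish \eqref{sec} with the displayed constants and a genuinely uniform $\oh(\al)$ remainder. This needs: (i) setting up the extended spectral theory and checking that, for $\al\neq0$, the states in question --- being resonances rather than $L^2$-eigenfunctions --- are still exactly the zeros of $Z(\al,\cdot)$ near the central line and vary real-analytically as $\al\downarrow0$; (ii) an explicit enough description of how the three-cusp scattering matrix of $\Gm_0(4)$ for the trivial character degenerates, as $\al\to0$, into the scalar right-hand side of \eqref{sec} --- this is where the arithmetic of level~$4$ produces the factors $2^{1\pm2it}$ and the ratio of zeta values and where the Gamma-factor cancellations must be verified; and (iii) excluding the contribution of dissolving cusp forms, which is exactly the role of the hypothesis on~$I$. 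A subsidiary point is to track the branches of the argument carefully enough that the ambiguity in \eqref{sec} reduces to the single integer~$k_I$ rather than an uncontrolled real phase.
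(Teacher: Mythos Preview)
Your endgame --- the secular equation~\eqref{sec} and its solution --- is correct and matches the paper's. The gap is in the setup, where two confusions would prevent you from actually deriving~\eqref{sec}.

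First, the zeros in Theorem~\ref{thm-ei-ln} lie \emph{on} the central line $\re\bt=\frac12$, so they are eigenvalues (spectral parameters of genuine $L^2$ cusp forms), not resonances. Calling them ``resonances rather than $L^2$-eigenfunctions'' and invoking the extended spectral theory with exponential growth is backwards; that machinery is needed for Theorems~\ref{thm-res0}--\ref{thm+tch}, which concern zeros with $\re\bt<\frac12$.

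Second, and more seriously, the paper does not obtain~\eqref{sec} by expanding the Eisenstein series at the cusp~$0$. The crucial structural input you are missing is the parity decomposition under the involution $J$ induced by $z\mapsto \bar z/(2\bar z-1)$. Conjugating the extended scattering matrix by the orthogonal matrix~$\Um$ block-diagonalises it as $\scm^+\oplus C_-$ (see~\eqref{escm-split}). Your equation~\eqref{sec} is exactly the \emph{odd} equation $X\,C_-=1$, and by fact~\ref{fact-odd-eq} its solutions on the central line are the parameters of \emph{odd} cusp forms, arising as residues of the odd Poincar\'e combination $\frac1{\sqrt2}(P^\infty-P^{-1/2})$; see \S\ref{sect-cpPs}. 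The Eisenstein series at~$0$ is even under~$J$ and lives in the $\scm^+$ block, so it cannot produce~\eqref{sec}. The explicit right-hand side of~\eqref{sec} is $Y_-(0,\frac12+it)=\Gf(it)\Gf(-it)^{-1}C_-(0,\frac12+it)^{-1}$, read off directly from~\eqref{ucu0}, not via a Dirichlet-series computation.

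Without the even/odd splitting you cannot isolate a scalar equation at all: you would face the full $3\times3$ (or the $2\times2$ block $\scm^+$) problem, and you would have no explanation for why these particular zeros remain on the central line while the zeros governed by $X\,C_+=1$ move off it and become the resonances of Theorems~\ref{thm-res0}--\ref{thm+I}. The hypothesis on~$I$ enters because singularities of $C_-$ on $\{0\}\times(\frac12+iI)$ can occur only at unperturbed eigenvalues (fact~\ref{fact-us-escm}), so excluding them makes $A_-$ continuous up to $\al=0$ on~$I$; this is the paper's Lemma~\ref{lem-S-}, and it is what gives the uniform $\oh(\al)$ remainder you need.
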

The theorem gives an assertion concerning the behavior of the zeros on
the central line at given positive values $t$ of $\im \bt$, and
describes the asymptotic behavior as the parameter~$k$ from the
previous theorem tends to~$\infty$. To compare this prediction with
the data we determine by interpolation the value $a_k(t)$ for the
curves used in Figure~\ref{fig-asm0}. The theorem predicts that
\be\label{kI-q} \log\frac{ a_k(t)}\pi + \frac{\pi k}t -
\frac{\pi_-(1/2+it)}{2t} \,\approx\, \frac{\pi k_I}{2t} +
\oh\bigl(e^{-\pi k/t}\bigr)
\,.\ee
We used the data for the curves with $1\leq k \leq 19$ to compute an
approximation of the quantity on the left in~\eqref{kI-q}. 
We consider this as a vector in $\RR^{19}$, with coordinates
parametrized by~$k$, and project it orthogonally on the line spanned
by $(1,1,\ldots,1)$ with respect to the scalar product
$(x,y) = \sum_{k=1}^{19} k^{20}\, x_k\, y_k$, and thus obtain
approximations of $\pi k_I/2t$, which are given in
Table~\ref{tab-kI-}.
\begin{table}[ht]{\footnotesize
\[
\begin{array}{|c|ccccccccc|}\hline
t:&1&2&3&4&5&6&7&8&9\\
& -2.000& -2.000& -2.000& -2.000& -2.000& 2.000& 2.000& 4.015& 10.17\\
\hline
\end{array}
\]} \caption{Approximation of $k_I$ in~\protect\eqref{kI-q}.}\label{tab-kI-}
\end{table}

Figure~\ref{fig-kI-} illustrates the approximation of $k_I$ for more
values of $t$ between $0.05$ and $9.00$.
\begin{figure}[ht]
\begin{center}
\makebox[\grtt]{\epsfxsize=\grtt\epsffile{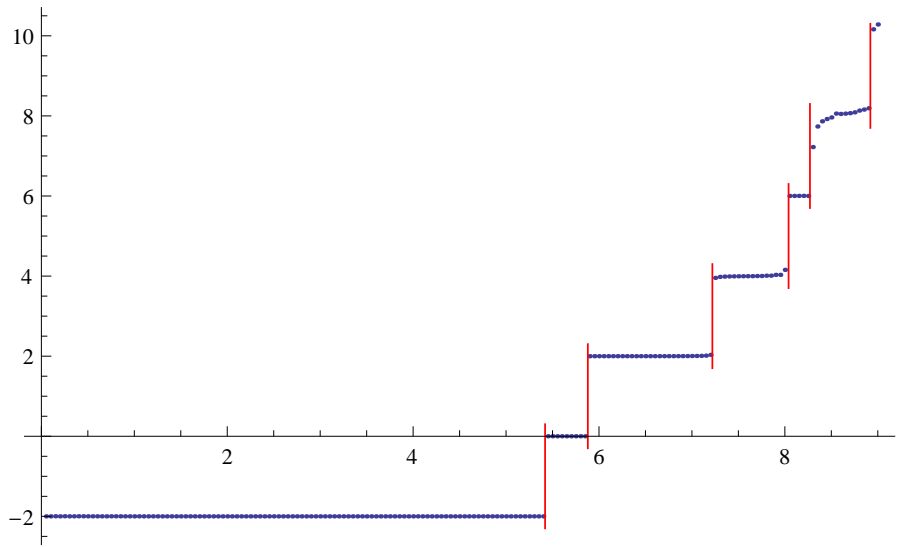}}
\end{center}
\caption{Approximation of $k_I$ for $t\in (0,9]\cap\frac1{20}\ZZ$. The
vertical lines indicate the position of the unperturbed odd
eigenvalues (from Table~D.1 in~\protect\cite{Fr}).}\label{fig-kI-}
\end{figure}
The intervals $I$ in the theorem should not contain zeros of the
unperturbed Selberg zeta-function. Actually, the proofs will tell us
that not all unperturbed zeros are not allowed to occur in~$I$, only
those associated to Maass cusp forms that are odd for the involution
induced by $z\mapsto \bar z/(2\bar z-\nobreak 1)$. We have indicated
the corresponding $t$-values by vertical lines in
Figure~\ref{fig-kI-}.\footnote{The comparison of the theoretically
obtained asymptotic formulas with the data from~\cite{Fr} has been
carried out mainly with \texttt{Pari/gp}, \cite{Pari}; for some of
the pictures we used Mathematica. }

\subsubsection{Avoided crossings. }\label{sect-ac}If one looks at the
graphs of the functions $\tau_k$ in Figure~\ref{fig-spectr} (ignoring
the coloring) it seems that the graphs intersect each other.\footnote{This 
may seem not to be 
completely true in the  the posting on arXiv, probably due to the lower
resolution that we had to use.}
In the
 enlargement in Figure~\ref{fig-ac} most of these intersections turn
out to be no intersections after all. This is the phenomenon of
\emph{avoided crossings} that is known to occur at other places as
well; for instance in the computations of Str\"omberg in~\cite{Str}.
\begin{figure}
\begin{center}
\renewcommand\brdt{11.2}
\makebox[\grtt]{\epsfxsize=\grtt\epsffile{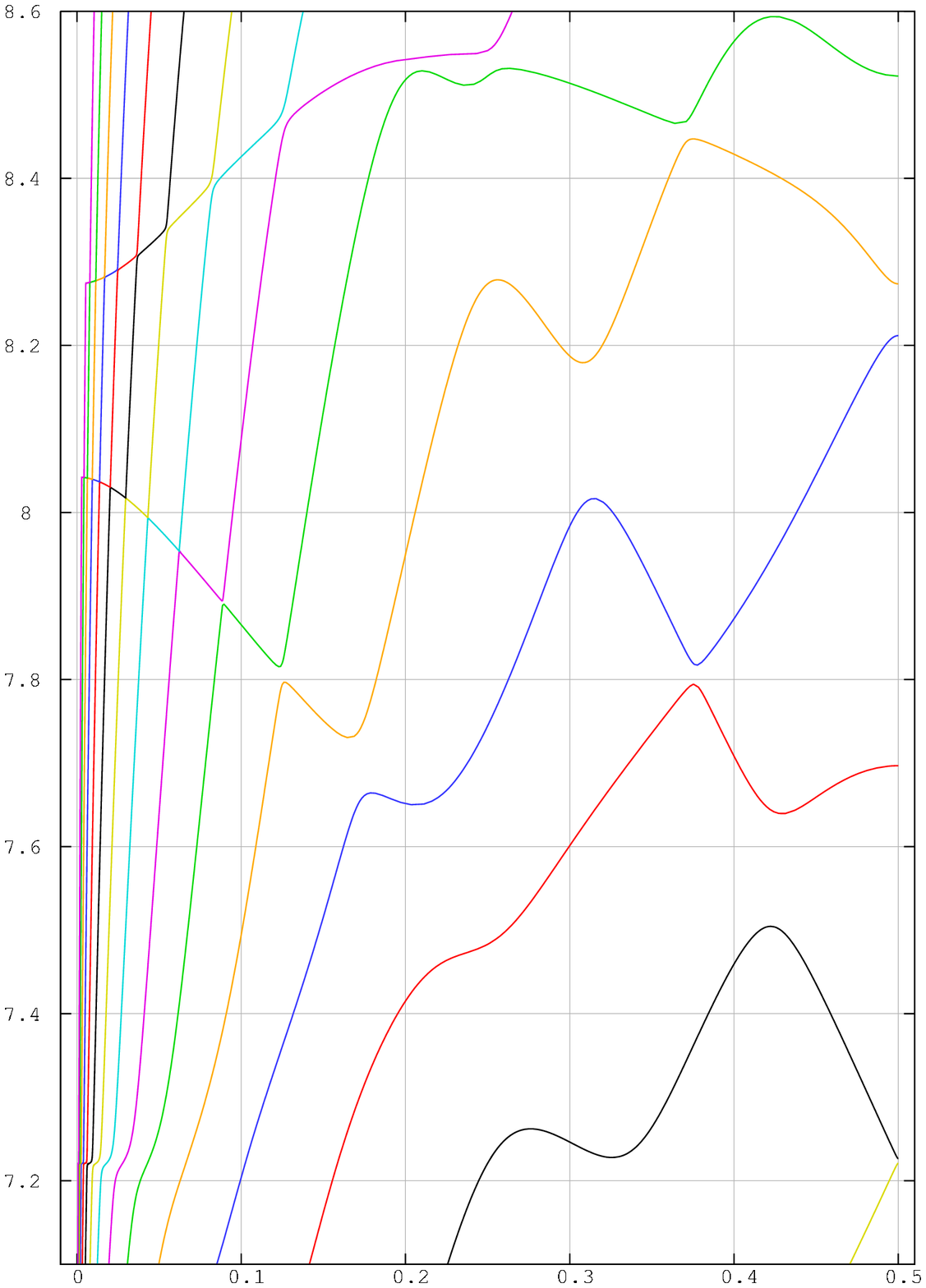}}
\end{center}
\caption{Enlargement of a subregion in Figure~\protect\ref{fig-spectr}. Zeros
of $Z(\al,\bt)$ with $0<\al\leq \frac12$, $\bt\in
\frac12+i(7.1,8.6)$.}\label{fig-ac}
\end{figure}
In the computations for~\cite{Fr} care was taken to decrease the step
length whenever curves of zeros approached each other. In all cases
this indicated that the curves of zeros do not intersect each other.
Theoretically, we know that no intersections occur for the zeros
moving along the central line in the region indicated in
Lemma~\ref{lem-S-}.

In Remark~\ref{rmk-av} we will discuss that for some of the $t_0>0$
for which $Z(0,\frac12+\nobreak it_0)=0$ there may be a curve through
$t_0$ in the $(\al,t)$-plane such that $\tau_k'(\al)$ is relatively
small for the value of $\al$ for which the graph of $\tau_k$
intersects the curve. We show this only under some simplifying
assumptions formulated in Proposition~\ref{prop-av}.

\subsection{Curves of resonances}\label{sect-cres} The zeros of
$Z(\al,\bt)$ with $\bt$ to the left of the central line are more
difficult to depict, since they form curves in the three-dimensional
set of $(\al,\bt)$ with $\al\in (0,1)$ and $\bt\in \CC$.
\begin{figure}
\begin{center}
\renewcommand\brdt{12.2}
\makebox[\grtt]{\epsfxsize=\grtt\epsffile{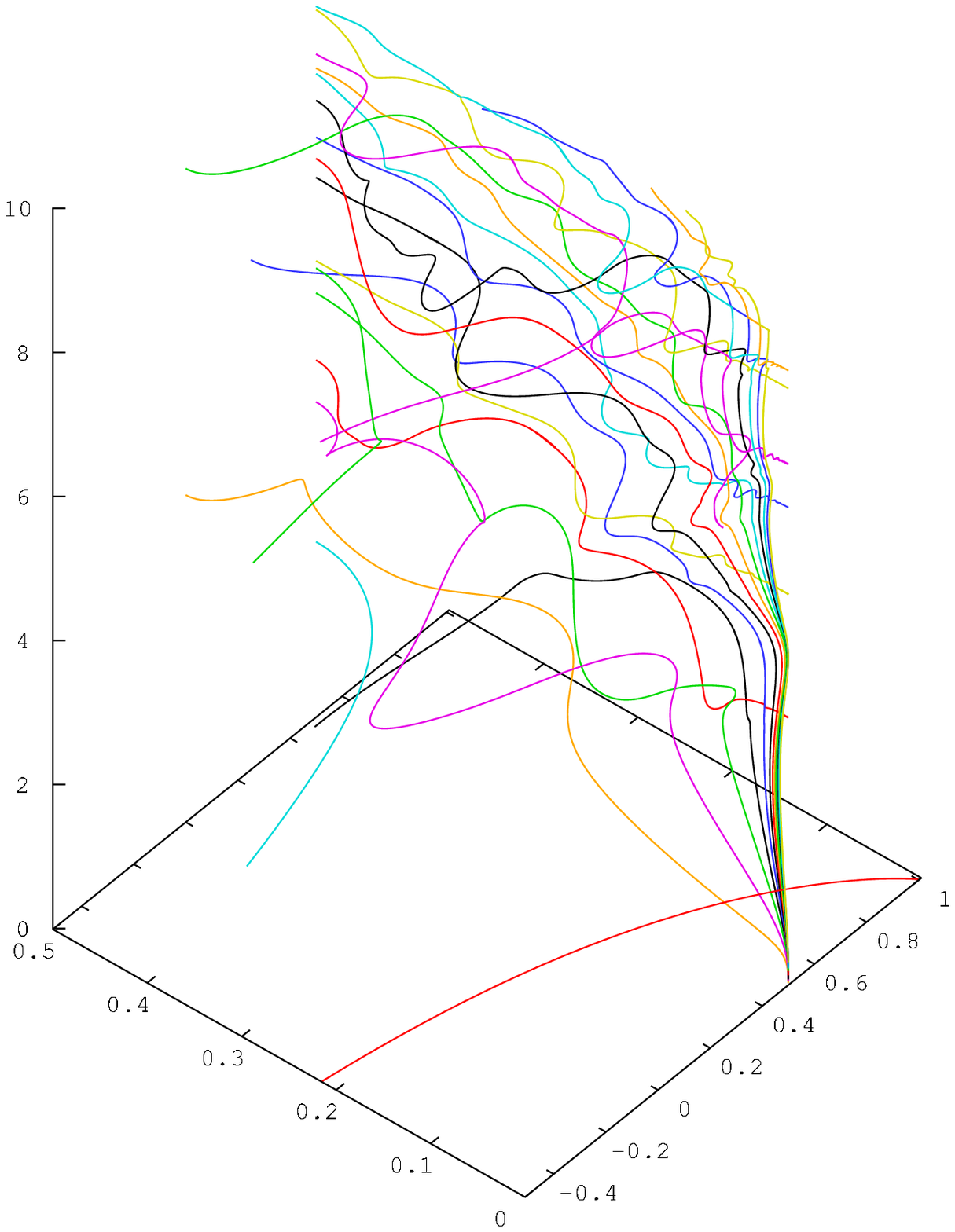}}
\end{center}
\caption{Zeros with $\re\bt<\frac12$, $\al>0$, in a $3$-dimensional
graph. On the vertical axis $\im \bt$ runs from $0$ to~$10$. The
`horizontal' axis running to the left gives the
coordinate~$\al\in \bigl(0,\frac12\bigr)$, and the `horizontal' axis
to the right gives
$\re\bt \in \bigl(-\frac12,1\bigr)$.}\label{fig-3dim}
\end{figure}

Figure~\ref{fig-3dim} gives a three-dimensional picture. We see one
curve in the horizontal plane, corresponding to $\im\bt=0$. In this
paper we do not consider real zeros of the Selberg zeta-function.
Many curves originate for $\al\approx 0$ from $\bt=\frac12$ and move
upwards in the direction of increasing values of $\im \bt$. On the
right we see also a few more curves that wriggle up starting from
higher values of $\im \bt$.

In Figure~\ref{fig-res-projC}
\begin{figure}
\begin{center}
\renewcommand\brdt{12.4}
\makebox[\grtt]{\epsfxsize=\grtt\epsffile{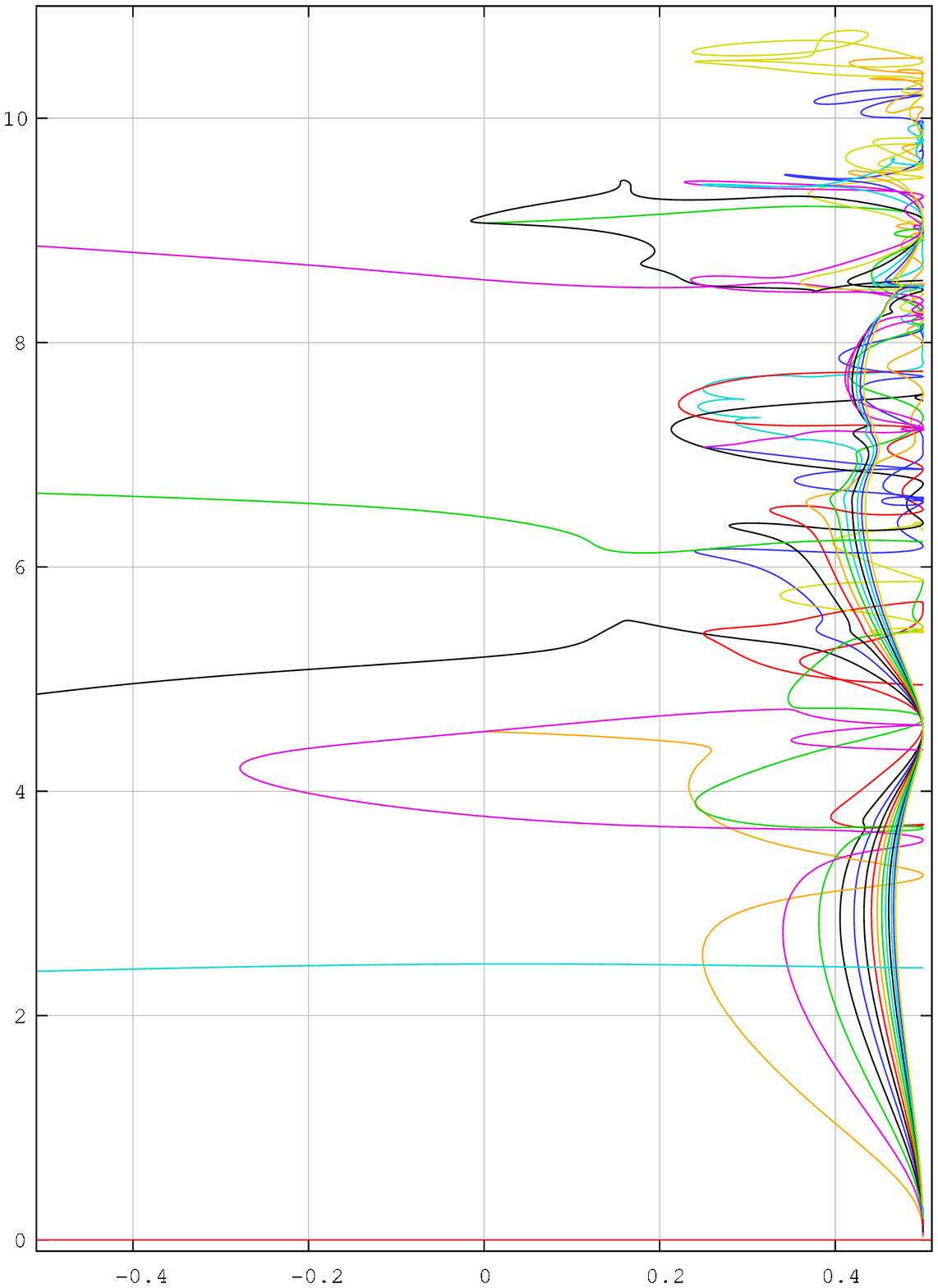}}
\end{center}
\caption{Curves of resonances in the complex plane. The vertical axis
carries $\im \bt$ and the horizontal one
$\re \bt$.}\label{fig-res-projC}
\end{figure}
we project the curves onto the $\bt$-plane. In this projection we
cannot see the $\al$-values along the curve. We see again the curves
starting at $\bt=\frac12$. Many of them seem to touch the central
line at higher values of~$\im\bt$. The curves that start higher up
are not well visible in this projection.
\smallskip

We can confirm certain aspects of these computational results by
theoretical results. We start with the behavior of the resonances
near $\bigl(0,\frac12\bigr)$.
\begin{thm}\label{thm-res0}There are $\e_1,\e_2,\e_3>0$ such that all
$(\al,\bt)$ that satisfy $Z(\al,\bt)=0$, $\al\in (0,\e_1]$,
$\frac12-\e_2 \leq \re \bt<\frac12$, and $0<\im\bt\leq \e_3$ occur on
countably many curves
\[ t\mapsto \bigl(\al_k(t),\s_k(t)+it\bigr)\qquad(0<t\leq \e_3)\,,\]
parametrized by integers $k\geq 1$. The functions $\al_k$ and $\s_k$
are real-analytic. The values of $\s_k$ are in
$\bigl[\frac12-\nobreak \e_2,\frac12\bigr)$. For each $k\geq 1$ the
map $\al_k$ is strictly increasing and has an inverse $t_k$ on some
interval $(0,\z_k]\subset(0,\e_1]$. As $\al\downarrow0$ we have
\begin{align}
\label{as+0t}
t_k(\al) &\= \frac{\pi k}{|\log\pi^2\al|}
+\oh\Bigl(\frac1{|\log\pi^2\al|^4}\Bigr)\,,\\
\label{as+0sg}
\s_k\bigl(t_k(\al)\bigr)&\= \frac12-\frac{2(\pi k \log
2)^2}{|\log\pi^2\al|^3}
+ \oh\Bigl(\frac1{|\log\pi^2\al|^4}\Bigr)\,.
\end{align}
\end{thm}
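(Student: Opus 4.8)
The plan is to run a threshold-perturbation analysis completely parallel to the proof of Theorem~\ref{thm-ei-0}, the only difference being that we now look for complex solutions with $\re\beta<\frac12$ rather than real points on the line. The starting point is the spectral/scattering description of $Z(\Gm_0(4),\ch_\al;\cdot)$ from Section~\ref{sect-spth}: the zeros of $Z(\al,\cdot)$ with $\re\beta<\frac12$ near the threshold $\beta=\frac12$ are the zeros of an explicit meromorphic "resonance function" $R(\al,\beta)$ — essentially the relevant scattering coefficient attached to the cusp at~$\infty$, which ceases to be singular once $\al\neq0$ — continued through $\re\beta=\frac12$ by means of the exponentially growing Eisenstein/resonance family. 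The first task is to put this condition into a normal form near $(\al,\beta)=(0,\tfrac12)$. After extracting the dominant $\al$-dependent factor, which in this normalization turns out to be $\pi^2\al$, the equation $R(\al,\beta)=0$ becomes
\[
(\pi^2\al)^{\,2\beta-1}\=H(\beta),
\]
where $H$ is holomorphic and nonvanishing near $\beta=\tfrac12$, built from $\Gm$-factors, the Riemann $\z$-function, and powers of~$2$ (the local factor at the prime dividing the level~$4$), and satisfies the functional equation $H(\beta)H(1-\beta)=1$ together with real-coefficient symmetry; the crucial features are $H(\tfrac12)=1$, $(\log H)'(\tfrac12)=0$, and $(\log H)''(\tfrac12)=-8(\log 2)^2$.

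Granting this normal form, set $L:=|\log\pi^2\al|=-\log(\pi^2\al)$, large and positive for $\al$ small. Taking logarithms with the principal branch near $\beta=\tfrac12$ gives $(2\beta-1)L=2\pi i k-\log H(\beta)$, $k\in\ZZ$, and the solutions with $0<\im\beta$ small and $\re\beta<\tfrac12$ are precisely those with $k\ge1$. For each fixed $k$ one solves this by a contraction argument on a small disc about $\beta=\tfrac12$: writing $\beta=\tfrac12+\tfrac{\pi i k}{L}+\tfrac{\eta}{L}$, the map $\eta\mapsto-\tfrac12\log H\!\bigl(\tfrac12+\tfrac{\pi i k+\eta}{L}\bigr)$ is a contraction on a fixed neighbourhood of~$0$ once $L$ is large, with fixed point of size $O(k^2/L^2)$ because $(\log H)'(\tfrac12)=0$. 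Applying the implicit function theorem to the real-analytic equation $R(\al,\beta)=0$ (in the real variables $\al$ and $\re\beta,\im\beta$) produces a real-analytic curve; reparametrizing by $t=\im\beta$ yields the functions $\al_k,\s_k$, and $\al_k$ is strictly increasing with real-analytic inverse $t_k$ on some $(0,\z_k]\subset(0,\e_1]$ because $t_k(\al)\sim\pi k/L$ and $L$ is strictly decreasing in $\al$. Exhaustiveness in the region $\{0<\al\le\e_1,\ \tfrac12-\e_2\le\re\beta<\tfrac12,\ 0<\im\beta\le\e_3\}$ follows exactly as in Theorem~\ref{thm-ei-0}: for $\e_1,\e_2,\e_3$ small enough the zeros of $Z(\al,\cdot)$ there are exactly the solutions of $R(\al,\beta)=0$, no cusp-form zeros of $Z(0,\cdot)$ intruding since $Z(0,\cdot)$ has no zero at $\tfrac12$ and its nearest zeros sit at fixed positive height.

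The asymptotics then fall out by bookkeeping. From $\beta-\tfrac12=\tfrac{\pi i k}{L}+O(k^2/L^2)$ and $(\log H)'(\tfrac12)=0$ one gets
\[
\log H(\beta)\=\tfrac12(\log H)''(\tfrac12)\,(\beta-\tfrac12)^2+O\!\bigl((\beta-\tfrac12)^3\bigr)
\= \tfrac{4(\pi k\log 2)^2}{L^2}+O\!\bigl(k^3/L^3\bigr),
\]
which is real to this order, so $\im\log H(\beta)=O(k^3/L^3)$ and $\re\log H(\beta)=\tfrac{4(\pi k\log 2)^2}{L^2}+O(k^3/L^3)$. Taking real and imaginary parts of $2\beta-1=\bigl(2\pi i k-\log H(\beta)\bigr)/(-L)$ and substituting $\al=\al_k(t)$ (so $L=|\log\pi^2\al|$) gives
\[
t_k(\al)\=\im\beta\=\frac{\pi k}{L}-\frac{\im\log H(\beta)}{2L}\=\frac{\pi k}{|\log\pi^2\al|}+\oh\!\Bigl(\frac{1}{|\log\pi^2\al|^4}\Bigr),
\]
\[
\s_k\bigl(t_k(\al)\bigr)-\tfrac12\=\re\beta-\tfrac12\=-\frac{\re\log H(\beta)}{2L}\=-\frac{2(\pi k\log 2)^2}{|\log\pi^2\al|^3}+\oh\!\Bigl(\frac1{|\log\pi^2\al|^4}\Bigr),
\]
which are \eqref{as+0t} and \eqref{as+0sg} (one checks as usual that the $O(k^3/L^3)$ correction to $\beta-\tfrac12$ feeds back into $\log H$ only at order $O(1/L^4)$).

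The main obstacle is the very first step: establishing the normal form $(\pi^2\al)^{2\beta-1}=H(\beta)$ with the precise vanishing $(\log H)'(\tfrac12)=0$ and the value $(\log H)''(\tfrac12)=-8(\log 2)^2$. Everything downstream is routine, but these two facts are exactly what pins down the shape of \eqref{as+0t}–\eqref{as+0sg}: without $(\log H)'(\tfrac12)=0$ the error in \eqref{as+0t} would only be $O(1/|\log\pi^2\al|^2)$, and $(\log H)''(\tfrac12)$ is the source of the constant $2(\pi k\log 2)^2$. Proving them requires an explicit computation of the Fourier expansion at the cusp~$\infty$ of the relevant exponentially growing automorphic family for $\Gm_0(4)$ with character $\ch_\al$, tracking how $\al$ enters through the shifted frequencies $n+\al$ and the associated Lerch/Hurwitz-type sums $\sum_n|n+\al|^{-(2\beta-1)}$, whose small-$\al$ expansion produces both the prefactor $(\pi^2\al)^{2\beta-1}$ and the $2$-adic local structure of $H$; the cancellation $(\log H)'(\tfrac12)=0$ should come from combining the $\beta\leftrightarrow1-\beta$ functional equation with the contributions of the $\z'/\z(0)=\log 2\pi$ term, the power-of-$\pi$ factor, and the $2$-local factor. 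This is where the detailed results of Section~\ref{sect-spth} specific to $\Gm_0(4)$ (rather than generic cofinite groups) are indispensable.
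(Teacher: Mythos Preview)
Your overall strategy is essentially the paper's: reduce to an equation $(\pi\al)^{2\bt-1}=Y_+(\al,\bt)$ (your $H$ is $\pi^{2\bt-1}Y_+$), expand $\log Y_+$ at $(0,\tfrac12)$, run a contraction argument for existence/uniqueness of the curves, and then iterate to get the asymptotics. The bookkeeping you sketch for \eqref{as+0t}--\eqref{as+0sg} is correct in spirit (modulo a sign slip: you first write $(2\bt-1)L=2\pi ik-\log H$ and later divide by $-L$; the first is the right one).

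Where the proposal falls short is exactly the step you flag, but your diagnosis of how to fill it is off. The paper does \emph{not} derive the normal form by expanding Lerch/Hurwitz sums $\sum_n|n+\al|^{1-2\bt}$ and chasing a cancellation involving $\z'/\z(0)$. Instead it uses the \emph{extended scattering matrix}: the resonance condition is that the perturbed scattering ``matrix'' $D_\al$ be singular, and via \eqref{D-def} this is the equation $X(\al,\bt)\,C_+(\al,\bt)=1$, where $C_+$ is a specific matrix element of the family $\scm(\al,\bt)$. The point is that $C_+(0,\bt)$ is known \emph{explicitly} from \eqref{sm0-expl}, namely $C_+(0,\bt)=\Ld(2\bt-1)/\bigl((2^{2\bt}-1)\Ld(2\bt)\bigr)$, and the Taylor coefficients you need drop out of that formula by direct computation; the evenness of $C_+$ in $\al$ gives the $\oh(\al^2)$ remainder. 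No delicate cancellation is required: the linear term in $\log Y_+$ is exactly $-2\log\pi\,(\bt-\tfrac12)$, which is what converts $(\pi\al)^{2\bt-1}$ into $(\pi^2\al)^{2\bt-1}$.

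Two further points. First, your claim $H(\bt)H(1-\bt)=1$ is \emph{false}: $C_+$ sits in the $2\times2$ block $\scm^+$, not in a $1\times1$ block, so $C_+(\al,1-\bt)=C_{0,0}(\al,\bt)/\det\scm^+(\al,\bt)\neq C_+(\al,\bt)^{-1}$; in fact $Y_+(0,\bt)Y_+(0,1-\bt)=(2^{2\bt}-1)(2^{2-2\bt}-1)$. Fortunately you never use this. Second, you silently write $H$ as a function of $\bt$ alone, but $C_+$ genuinely depends on $\al$; the paper handles this by keeping the $\oh(\al^2)$ term throughout (it is exponentially small compared with the $1/L^n$ terms, so it disappears in the asymptotics, but that has to be said). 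You also need, for the exhaustiveness claim, the observation (Proposition~\ref{prop-dn}) that zeros of the numerator and denominator in \eqref{D-def} are interchanged by $\bt\mapsto1-\bar\bt$, so that in the strip $\re\bt<\tfrac12$ the zeros of $1-XC_+$ are genuine singularities of $D_\al$ and hence genuine resonances.
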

The theorem confirms that there are many curves of resonances that
approach the point $(0,\frac12)$ almost vertically as
$\al\downarrow0$. To check this graphically, one may consider the
three quantities
\bad\label{k1-3}
k_1(\al,\bt)&\= \sqrt{\bigl(\frac12-\re \bt\bigr) \,
|\log\pi\al|^3/2\pi^2(\log 2)^2}\,,\\
k_2(\al,\bt)&\= \im \bt \Bigl(
\frac\pi{|\log\pi\al|}+\frac{\pi\,\log\pi}{|\log\pi\al|^2}+
\frac{\pi(\log\pi)^2}{|\log\pi\al|^3}\Bigr)^{-1}\,,\\
k_3(\al,\bt)&\= \frac{2(\log 2)^2\, (\im\bt)^3}{\pi\bigl( \frac12-\re
\bt)}\,, \ead
which should each approximate the ``real'' $k$ as $\al\downarrow0$.
Figure~\ref{fig-k3} illustrates $k_3$. Note that the horizontal scale
gives $\al $ logarithmically, so the curves should tend to integers
when going to the left. Although the data go down to $\al=10^{-60}$
the limit behavior is not clear in the picture.
\begin{figure}
\begin{center}
\renewcommand\brdt{7.4}
\makebox[\grtt]{\epsfxsize=\grtt\epsffile{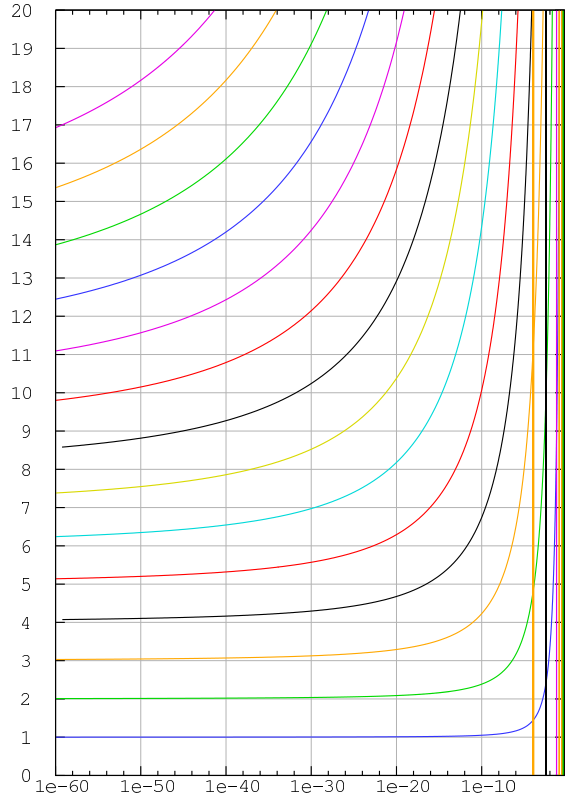}}
\end{center}
\caption{The quantity $k_3\bigl(\al,\bt\bigr)$ in~\protect\eqref{k1-3}
(vertical)
as a function of $\al$ (horizontal) on a logarithmic scale, for
several curves of resonances in~\protect\cite{Fr}.}\label{fig-k3}
\end{figure}

In a non-graphical approach we approximate the limit value by finding
the coefficients in a least square approximation
\be k_j(\al,\bt) \;\approx\; \sum_{\ell=1}^{5}
\frac{c_\ell}{|\log\al|^\ell} \ee
over the $500$ data points with lowest values of~$\al$ on each of the
curves of resonances going to $\bigl(0,\frac12\bigr)$. The
coefficient $c_0$ should be an approximation of the limit. The data
are in Table~\ref{tab-kvalues}.
\begin{table}
{\small \begin{tabular}{|l|l|l|l|}
\hline
file name & $ k_{R,1}$ & $ k_{R,2}$ & $ k_{R,3}$ \\
\hline
S6 & \hbox{\ \ }1.00000077911 & \hbox{\ \ }1.00000012847 & \hbox{\ \
}0.999999581376 \\
S8 & \hbox{\ \ }1.99993359598 &\hbox{\ \ }2.00000692659 & \hbox{\ \
}2.00002268168 \\
S7 & \hbox{\ \ }2.99969624243 & \hbox{\ \ }2.99999718822 & \hbox{\ \
}3.00014897712 \\
S1-1 & \hbox{\ \ }3.98453704088 & \hbox{\ \ }3.99972992678 & \hbox{\ \
}4.00794474117 \\
S3 & \hbox{\ \ }4.97680447599 & \hbox{\ \ }4.99941105921 & \hbox{\ \
}5.01294455984 \\
S16 & \hbox{\ \ }5.9998899911 & \hbox{\ \ }5.99996250451 & \hbox{\ \
}6.00002349805 \\
S19 & \hbox{\ \ }6.96419831285 & \hbox{\ \ }6.99744783611 & \hbox{\ \
}7.03093500105 \\
S23 & \hbox{\ \ }8.09149912519 & \hbox{\ \ }7.9904003854 & \hbox{\ \
}8.02661700448 \\
S26 & \hbox{\ \ }9.03588236648 & \hbox{\ \ }8.99230468926 & \hbox{\ \
}9.04177665474 \\
S11 & 10.1247216267 & \hbox{\ \ }9.99015681093 & 10.0254416847 \\
S34 & 11.4775951755 & 10.9860223595 & 10.9219590835 \\
S43 & 11.93647544 & 11.9959050481 & 12.0661799032
\\
S38 & 13.3059450769 & 12.9885322374 & 12.9785198908 \\
S46 & 14.0371581712 & 13.9931918442 & 14.0160494173 \\
\hline
\end{tabular}}\medskip
\caption{Least square approximation of the limits $k_{R,j}$ of the
quantities $k_j$ in~\protect\eqref{k1-3}. (The first column refers to the
naming in~\protect\cite{Fr} of the curves of zeros.)}\label{tab-kvalues}
\end{table}
This gives a reasonable confirmation that \eqref{as+0t} and
\eqref{as+0sg} describe the asymptotic behavior of the data. We also
experimented with direct least square approximation of the
coefficients of the expansion of $t_k(\al)$ and
$\s_k\bigl(t_k(\al)\bigr)$ as a function of $\frac1{|\log\pi^2\al|}$.
The results from the approximation of $\s_k\bigl(t_k(\al)\bigr)$ were
less convincing than those in Table~\ref{tab-kvalues}.
\medskip

The next result concerns curves higher up in the $\bt$-plane.
\begin{thm}\label{thm+I}Let $I$ be a bounded interval in $(0,\infty)$
such that $\{0\}\times\left( \frac12+i I\right)$ does not contain
zeros of the unperturbed Selberg zeta-function $Z(0,\cdot)$.

There are countably many real-analytic curves of resonances of the
form
\[ t\mapsto \bigl( \al_k(t), \s_k(t)+it\bigr)\qquad \text{ with }t\in
I\,,\]
parametrized by integers $k\geq k_1$ for some integer~$k_1$. Uniformly
for $t\in I$ we have the relations
\begin{align}
\label{asal+t}
\al_k(t) &\= \frac1\pi\, e^{A(1/2+it)/2t}\, e^{-\pi k/t}\,\Bigl(1+
\oh\bigl(\frac1k\bigr)\Bigr)\,,\\
\label{assg+t}
\frac12-\s_k(t) &\= \frac t{2\pi k}\, M\bigl(\frac12+it)
+ \oh \Bigl( \frac 1{k^2}\Bigr)\,,
\end{align}
as $k\rightarrow\infty$, where $M(t)$ and $A(t)$ are the real and
imaginary part of a continuous choice of
\[ \log\Bigl( \pi^{2it} \,\bigl( 2^{1+2it}-1\bigr)\,
\frac{\z(-2it)}{\z(2it)}\Bigr)\,.\]
\end{thm}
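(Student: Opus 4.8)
The plan is to follow the same route as for Theorems~\ref{thm-ei-ln} and~\ref{thm-res0}: first use the spectral theory of~\S\ref{sect-spth} to reduce the location of the resonances near the central line to an explicit transcendental equation, and then solve that equation asymptotically as $k\to\infty$; the curves produced this way are the continuations to $t\in I$ of the families coming out of $(0,\tfrac12)$ in Theorem~\ref{thm-res0}. For the reduction, the key point is that $\ch_\al$ is non-trivial at the cusp~$\infty$, so the Eisenstein series attached to the singular cusps of $\Gm_0(4)$ has, at~$\infty$, a zeroth Fourier term built from $K_{\bt-1/2}(2\pi\al y)$ rather than from powers of~$y$; its small-argument expansion contributes the two homogeneous pieces $\al^{\pm(\bt-1/2)}$. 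Matching these against the constant terms of the same object at the singular cusps — whose entries are the scattering quantities of $\Gm_0(4)$ for the trivial character, i.e.\ ratios of values of~$\z$ together with powers of~$2$ and~$\pi$ and a quotient of $\Gamma$-factors — produces, for $\al\downarrow0$ and $\bt$ in a complex neighbourhood of $\tfrac12+i\,I$ with $\re\bt\le\tfrac12$, an equation of the form
\be\label{eq-transc}
 (\pi\al)^{2\bt-1}\= \Ld(\bt)\,\bigl(1+\oh(\al^\dt)\bigr)\qquad(\al\downarrow0)
\ee
for some $\dt>0$, where $\Ld$ is an explicit meromorphic function, built from values of~$\z$ and elementary factors (the $\Gamma$-factors being absorbed by the reflection and duplication formulae into powers of~$2$ and~$\pi$), whose restriction to the central line has log-modulus~$M$ and argument~$A$. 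The reduction to~\eqref{eq-transc} is uniformly valid on $\tfrac12+i\,I$ precisely because that segment avoids the central-line zeros of $Z(0,\cdot)$: at such a zero the corresponding Maass cusp form dissolves and the matrix governing the matching degenerates. (The decomposition of the problem under $z\mapsto\bar z/(2\bar z-1)$ is what separates the on-line family of Theorem~\ref{thm-ei-ln}, for which the analogue of~$\Ld$ has modulus~$1$ on the central line, from the family of resonances treated here, for which $M\not\equiv0$ and hence $\s_k<\tfrac12$.)

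To solve~\eqref{eq-transc}, write $\bt=\s+it$ with $t\in I$ and $\s$ slightly below~$\tfrac12$, take logarithms, and separate the resulting identity into imaginary and real parts:
\[
 2t\log(\pi\al)\= A(\bt)-2\pi k+\oh(\al^\dt)\,,\qquad
 (1-2\s)\,\bigl|\log(\pi\al)\bigr|\= M(\bt)+\oh(\al^\dt)\,,\qquad k\in\ZZ\,,
\]
the integer~$k$ labelling the branches and hence the individual curves. Since $\tfrac12+i\,I$ is compact and free of poles of~$\Ld$, the functions $\Ld$, $A$, $M$ and their derivatives are bounded on a neighbourhood, and $t$ is bounded away from~$0$; the analytic implicit-function theorem together with a contraction estimate then gives, for all $k\ge k_1$, unique real-analytic $\al_k,\s_k$ on~$I$ solving this pair in the stated region, with $\al_k$ strictly increasing, and one checks that these exhaust the resonances there. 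One substitution gives the asymptotics: the first equation yields $\log(\pi\al_k(t))=\tfrac{A(\frac12+it)}{2t}-\tfrac{\pi k}{t}+\oh(1/k)$, which is~\eqref{asal+t}; inserting $|\log(\pi\al_k(t))|=\tfrac{\pi k}{t}+\oh(1)$ into the second then yields $1-2\s_k(t)=\tfrac{t\,M(\frac12+it)}{\pi k}+\oh(1/k^2)$, which is~\eqref{assg+t}. The errors $\oh(1/k)$ and $\oh(1/k^2)$ come from evaluating $A$ and $M$ at $\bt_k=\s_k(t)+it$ rather than at $\tfrac12+it$, an error of size $\oh(\tfrac12-\s_k)=\oh(1/k)$; the remainder in~\eqref{eq-transc} is of size $\oh(\al^\dt)=\oh(e^{-\dt\pi k/t})$ and is negligible by comparison.

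The hard part will be the reduction, carried out in~\S\ref{sect-spth}. One has to analyse the Eisenstein series of $\Gm_0(4)$ with a character non-trivial at one cusp, compute the relevant constant-term matrix explicitly, identify the exact elementary factors and the poles of~$\Ld$, and — the delicate point — control the remainder in~\eqref{eq-transc} uniformly as $\al\downarrow0$ with $\bt$ ranging over a full complex neighbourhood of $\tfrac12+i\,I$, showing in particular that away from the unperturbed cusp-form eigenvalues the dissolving cusp forms contribute only a bounded holomorphic perturbation. One must also verify that~\eqref{eq-transc} has no spurious solutions and that every resonance with $\re\bt<\tfrac12$, $\im\bt\in I$ and $\al$ small is accounted for; keeping track of the even/odd decomposition throughout is what makes it possible, as noted after the statement, to weaken the hypothesis to excluding only the odd unperturbed cusp forms.
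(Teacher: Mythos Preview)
Your approach is essentially the paper's: the transcendental equation you call~\eqref{eq-transc} is exactly the paper's $(\pi\al)^{2\bt-1}=Y_+(\al,\bt)$ from~\eqref{Y+eq}, with your $\Ld(\bt)=Y_+(0,\bt)$ and the $\al$-dependence of $Y_+$ absorbed into the remainder (the paper gets $\oh(\al^2)$ since $C_+$ is even in~$\al$ by~\ref{fact-fe}); the logarithmic split into a pair of real equations and the iterative extraction of the asymptotics match the paper's argument via Lemma~\ref{lem+I} and~\eqref{+eq} closely.

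One correction to your closing parenthetical: for \emph{this} theorem the relevant scattering entry is $C_+$, so the singularities on $\{0\}\times(\frac12+iI)$ that must be avoided correspond to \emph{even} unperturbed cusp forms (cf.\ Figure~\ref{fig-evclk1}), not odd ones; it is Theorem~\ref{thm-ei-ln}, governed by~$C_-$, for which the odd cusp forms are the obstruction. There is also no remark ``after the statement'' of Theorem~\ref{thm+I} saying this---you are thinking of the discussion following Theorem~\ref{thm-ei-ln}.
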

If we would use a standard choice of the argument the function $A$
would have discontinuities. The parameter $k$ is determined by the
choice of the branch of the logarithm. The theory does not provide
us, as far as we see, a way to relate the numbering of the branches
for different intervals~$I$.

We compare relation~\eqref{asal+t} with the data files in the same way
as we used for Theorem~\ref{thm+I}. The theorem says that the
relation holds for some choice $A$ of the argument. We picked a
continuous choice. Then we expect a factor $e^{\pi k_I/t}$
in~\eqref{asal+t} with $k_I$ constant on intervals as indicated in
the theorem. This leads to Figure~\ref{fig-evclk1}.
\begin{figure}
\begin{center}
\renewcommand\brdt{7.4}
\makebox[\grtt]{\epsfxsize=\grtt\epsffile{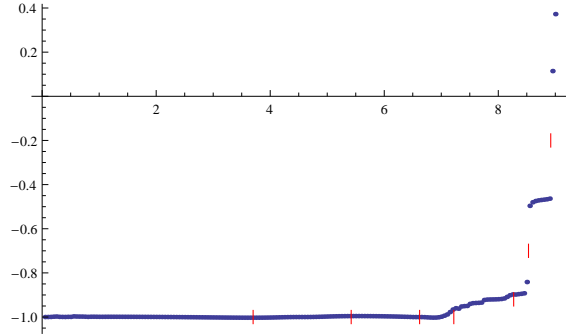}}
\end{center}
\caption{Approximation of $k_I$ for $t\in (0,9]\cap\frac1{20}\ZZ$. The
vertical lines indicate the position of unperturbed even eigenvalues
(from Table~D.1 in~\protect\cite{Fr}).}\label{fig-evclk1}
\end{figure}
The function~$M$ in~\eqref{assg+t} has the simple form
$M(t) = \log\bigl|2^{1+2it}-\nobreak 1\bigr|$. Figure~\ref{fig-evclM}
gives this function and the approximation of it based
on~\eqref{assg+t}.
\begin{figure}
\begin{center}
\renewcommand\brdt{7.4}
\makebox[\grtt]{\epsfxsize=\grtt\epsffile{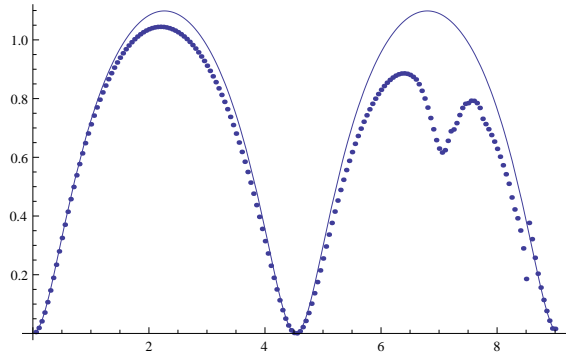}}
\end{center}
\caption{The function $t\mapsto \log\bigl|2^{1+2it}-\nobreak 1\bigr|$
and its approximation based on~\protect\eqref{assg+t}.}\label{fig-evclM}
\end{figure}
Figures \ref{fig-evclk1} and~\ref{fig-evclM} show differences that we
do not understand well.
\medskip

In Figure~\ref{fig-res-projC} it seems that at
$\bt \approx \frac12+4.5\,i$ many curves touch the central line.
Moreover, relation~\eqref{assg+t} suggests that there are infinitely
many curves that are tangent to the central line at the points
$\frac12+\frac{\pi i}{\log 2}\ell$ with $\ell\in \ZZ$. In
Figure~\ref{fig-res-projC} there seems to be a common touching to the
central line at $\bt\approx \frac12+ 9.0 \,i$ as well.
Figure~\ref{fig-touch} gives a closer few at the resonances near
$\bt=\frac12+\frac{\pi i}{\log 2}$ for curves computed in~\cite{Fr}.
\begin{figure}
\begin{center}
\renewcommand\brdt{7.4}
\makebox[\grtt]{\epsfxsize=\grtt\epsffile{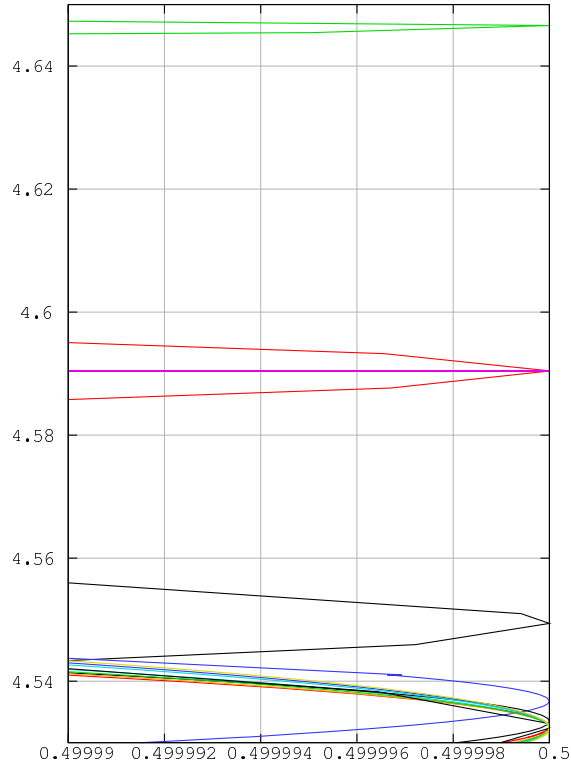}}
\end{center}
\caption{Enlargement of part of Figure~\protect\ref{fig-res-projC} near
$\bt=\frac12+\frac{\pi i}{\log 2}$.}\label{fig-touch}
\end{figure}
There is no common touching point, but a sequence of tangent points
approaching $\frac12+\frac{\pi i}{\log 2}$. Conclusions 8.2.32
and~8.2.33 in \cite{Fr} give a further discussion. Concerning this
phenomenon we have the following result:
\begin{thm}\label{thm+tch}Suppose that an interval~$I$ as in
Theorem~\ref{thm+I} contains in its interior a point
$t_\ell\isdef\frac{\pi\ell}{\log 2}$ with an integer $\ell\geq 1$.
Then there is $k_2\geq k_1$ such that for each $k\geq k_2$ the curve
$t\mapsto \s_k(t)+\nobreak it$ in Theorem~\ref{thm+I} is tangent to
the central line in a point $\frac12+it_\ell+i\dt_k\in \frac12+iI$,
and the $\dt_k$ satisfy
\be\label{dtk-as}
\dt_k \= \frac{\eta_2}{\pi^2} e^{A(1/2+it_\ell)/t_\ell}\, e^{-2\pi
k/t_\ell}\,\Bigl(1+\oh\bigl(k^{-1}\bigr)\Bigr)\,, \ee
for some $\eta_2\in \RR$. The function~$A$ is as in
Theorem~\ref{thm+I}.
\end{thm}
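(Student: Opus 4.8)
The idea is to reuse the equation that produces the resonance curves of Theorem~\ref{thm+I} and to push the local analysis near $t_\ell$ one order further than was needed there. Recall the shape of that equation. From the spectral theory of \S\ref{sect-spth}, the zeros $\bt$ of $Z(\al,\cdot)$ with $\re\bt<\tfrac12$, $\im\bt$ near $t_\ell$ and $\al$ small are the solutions of a holomorphic equation $\mathcal F(\al,\bt)=0$ which, after taking a logarithm of the relevant scattering-type factor and writing $\bt=\s+it$, splits into a phase relation and a modulus relation. The phase relation determines $\al=\al_k(t)$ along the $k$-th curve and reproduces~\eqref{asal+t}; the modulus relation, after solving for $\tfrac12-\s$ and inserting $\al=\al_k(t)$, can be put in the form
\[
 \tfrac12-\s_k(t)\= h_k(t)\=\frac t{2\pi k}\Bigl(M\bigl(\tfrac12+it\bigr)+R_k(t)\Bigr)\,,
\]
where $M(\tfrac12+it)=\log|2^{1+2it}-1|\ge0$ is the leading term responsible for~\eqref{assg+t}, and $R_k$ is a real-analytic remainder which is small, uniformly for $t$ near $t_\ell$, and carries all of the $\al$-dependence beyond first order. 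In particular $h_k\ge0$ on its interval of definition, since $\s_k<\tfrac12$.

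The crucial local fact is that $M$ has a \emph{double} zero at $t_\ell$. Indeed $2^{1+2it}-1=2\,e^{2i(t-t_\ell)\log2}-1$ on the central line and $t_\ell=\pi\ell/\log2$, so $|2^{1+2it_\ell}-1|=1$, and expanding $M(\tfrac12+it)=\tfrac12\log\!\bigl(5-4\cos(2t\log2)\bigr)$ about $t_\ell$ gives $M(\tfrac12+i(t_\ell+\ups))=c_0\ups^2+\oh(\ups^3)$ with a constant $c_0>0$. Hence the leading term of~\eqref{assg+t} vanishes to second order at $t_\ell$: at the precision of Theorem~\ref{thm+I} one sees only that $h_k\ge0$ is tiny near $t_\ell$, and the exact location of the point where the curve comes closest to the central line is fixed by $R_k$. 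The plan is to locate the critical point of $h_k$ near $t_\ell$: writing $t=t_\ell+\ups$ and using $M(\tfrac12+i(t_\ell+\ups))=c_0\ups^2+\oh(\ups^3)$, the equation $h_k'(t)=0$ reduces at leading order to $2c_0t_\ell\,\ups+R_k(t_\ell)+t_\ell R_k'(t_\ell)+\oh(\ups^2)=0$, so there is a unique small root $\ups=\dt_k$ with $\dt_k\approx-\bigl(R_k(t_\ell)+t_\ell R_k'(t_\ell)\bigr)/(2c_0t_\ell)$, and $\dt_k\in\RR$. A Rouch\'e / implicit-function-theorem argument, using the uniform estimates of Theorem~\ref{thm+I}, makes this rigorous and shows that the neighbourhood of $t_\ell$ can be chosen independently of $k$ once $k\geq k_2$. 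Since $h_k\ge0$ is real-analytic with $h_k'(t_\ell+\dt_k)=0$, we get $\s_k'(t_\ell+\dt_k)=0$, so the curve $t\mapsto\s_k(t)+it$ has a vertical tangent at $\tfrac12+i(t_\ell+\dt_k)$, i.e.\ is tangent to the central line there.

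It then remains to identify the leading order of $R_k(t_\ell)+t_\ell R_k'(t_\ell)$ as $k\to\infty$. The claim is that it is a fixed real multiple of $e^{A(1/2+it_\ell)/t_\ell}e^{-2\pi k/t_\ell}$, i.e.\ of $\al_k(t_\ell)^2$ by~\eqref{asal+t} (one factor $e^{-\pi k/t_\ell}$ beyond the size of $\al_k(t_\ell)$, reflecting a second-order effect in the small-character expansion). Granting this, solving for $\dt_k$ yields~\eqref{dtk-as} with $\eta_2\in\RR$ the resulting constant, and one checks $\eta_2\neq0$ so that the tangent point is genuinely displaced from $\tfrac12+it_\ell$ and the normalization in~\eqref{dtk-as} makes sense.

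The main obstacle is precisely this last step: computing the leading term of $R_k$ along the curve, hence $\eta_2$. In the proof of Theorem~\ref{thm+I} the remainder $R_k$ only had to be \emph{bounded}, since it affected only the $\oh(1/k)$ and $\oh(1/k^2)$ error terms in~\eqref{asal+t}--\eqref{assg+t}; here, because $M$ degenerates at $t_\ell$, this term \emph{is} the leading contribution to $\dt_k$. One must therefore return to the derivation of $\mathcal F$ from the constant-term and scattering matrices of $\Gm_0(4)$ with $\ch_\al$ recalled in \S\ref{sect-spth}, expand one order further in the ``small-cusp'' perturbation at $\infty$, identify the second-order contribution, verify that it produces the rate $e^{-2\pi k/t_\ell}$ claimed above, and extract the constant $\eta_2$ together with the fact that it is real and nonzero. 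The remaining ingredients — the real-analyticity of $t\mapsto\s_k(t)+it$ and the uniform-in-$k$ uniqueness of the tangent point near $t_\ell$ — are routine consequences of Theorem~\ref{thm+I} once $R_k$ is under control.
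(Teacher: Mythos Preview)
Your approach has a real gap and misses the key idea that makes the paper's proof work.

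First, you find a \emph{critical point} of $h_k=\tfrac12-\s_k$ near $t_\ell$, but ``tangent to the central line'' means $\s_k(t_\ell+\dt_k)=\tfrac12$, i.e.\ $h_k(t_\ell+\dt_k)=0$, not merely $h_k'(t_\ell+\dt_k)=0$. Your sentence ``has a vertical tangent at $\tfrac12+i(t_\ell+\dt_k)$'' smuggles in $\s_k=\tfrac12$ without proof. A priori the minimum of $h_k$ near $t_\ell$ could be strictly positive; whether it is zero depends precisely on the sign and size of the $\al^2$-correction, which is your acknowledged ``main obstacle''.

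The paper bypasses both difficulties at once. The key observation (absent from your plan) is that $\s_k(t)=\tfrac12$ is equivalent, via \eqref{+eq} and the unitarity of $\scm^+$ on the central line, to $C_{0,\infty}\bigl(\al_k(t),\tfrac12+it\bigr)=0$. Thus one never studies critical points of $h_k$; one studies the \emph{zero set} of $C_{0,\infty}$. From \eqref{sm0-expl} the restriction $\bt\mapsto C_{0,\infty}(0,\bt)$ has a simple zero at $\bt_\ell$, so Weierstrass preparation gives $C_{0,\infty}(\al,\bt)=\ld(\al,\bt)\bigl(\bt-\bt_\ell-i\eta(\al)\bigr)$ with $\eta(0)=0$; evenness in $\al$ and the functional equations force $\eta$ even and real on $\RR$, so $\eta(\al)=\eta_2\al^2+\cdots$ with $\eta_2\in\RR$. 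The touching point is then simply the intersection of $t\mapsto(\al_k(t),t)$ with $\al\mapsto(\al,t_\ell+\eta(\al))$, giving $\dt_k=\eta(a_k)\sim\eta_2 a_k^2$, and \eqref{dtk-as} follows from \eqref{asal+t}. Tangency is automatic since $\s_k\le\tfrac12$.

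So the missing ingredient is the reduction to $C_{0,\infty}=0$. Without it you are left trying to extract the $\al^2$-term of $M_+$ directly from the perturbation expansion, which is possible in principle but is exactly what the $C_{0,\infty}$ route packages cleanly. (Also: the theorem does not assert $\eta_2\ne0$, so you need not check that.)
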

We do not get information concerning $\eta_2$ from the theory.
Table~8.8 in \cite{Fr} gives approximated tangent points near
$\frac12+\frac{\pi i}{\log 2}$. In Figure~\ref{fig-tp1} we give the
corresponding approximations of $\log\eta_2$.
\begin{figure}
\begin{center}
\renewcommand\brdt{7.4}
\makebox[\grtt]{\epsfxsize=\grtt\epsffile{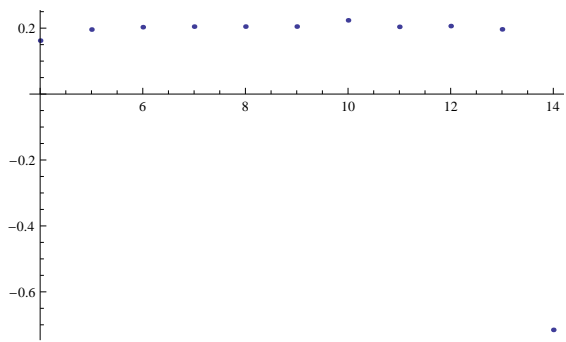}}
\end{center}
\caption{Approximation of $\eta_2$ in Theorem~\protect\ref{thm+tch} for
$\ell=1$, based on Table~8.8 in~\protect\cite{Fr}.}\label{fig-tp1}
\end{figure}
\medskip

\rmk Theorems \ref{thm-ei-0}--\ref{thm+tch} have been motivated by
part of the observations of Fraczek. In the next sections we present
proofs that do not depend on the computations. The comparisons of the
theoretically obtained asymptotic results with the computational data
is in some cases convincing, and show in other cases discrepancies
that we do not understand fully.

\rmk Figure~\ref{fig-3dim} shows curves of resonances that do not
approach $\bt=\frac12$ as $\al\downarrow0$.
\begin{figure}
\begin{center}
\renewcommand\brdt{8.4}
\makebox[\grtt]{\epsfxsize=\grtt\epsffile{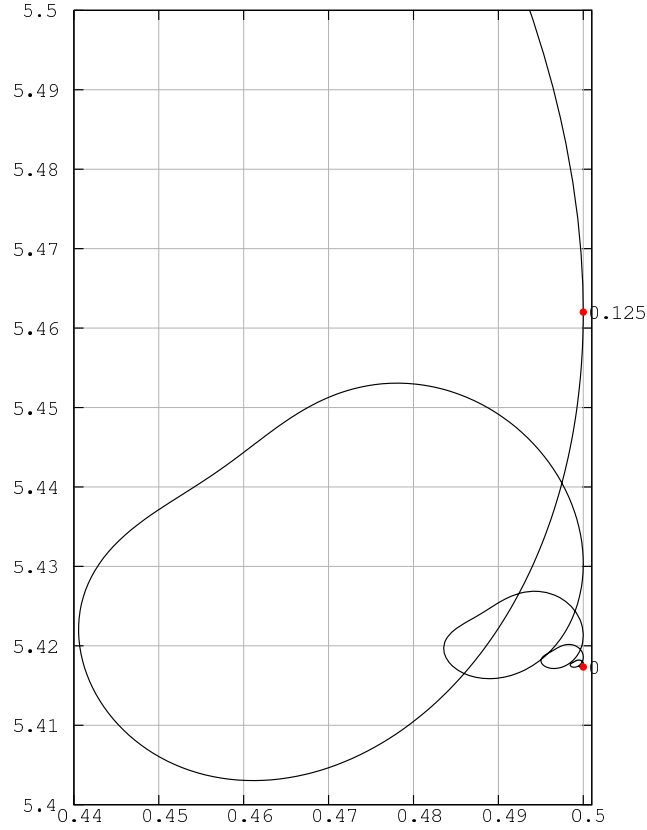}}
\end{center}
\caption{Curve of resonances starting at
$\bt\approx \frac12+i 5.4173$. Values of $\al$ are given in red.}
\label{fig-loop2}
\end{figure}
One of these curves is depicted in Figure~\ref{fig-loop2}, with an
enlargement of the part with small values of~$\al$ in
Figure~\ref{fig-loop2-detail}. The suggestion is that this curve
forms loops that repeatedly touch the central line.
\begin{figure}
\begin{center}
\renewcommand\brdt{8.4}
\makebox[\grtt]{\epsfxsize=\grtt\epsffile{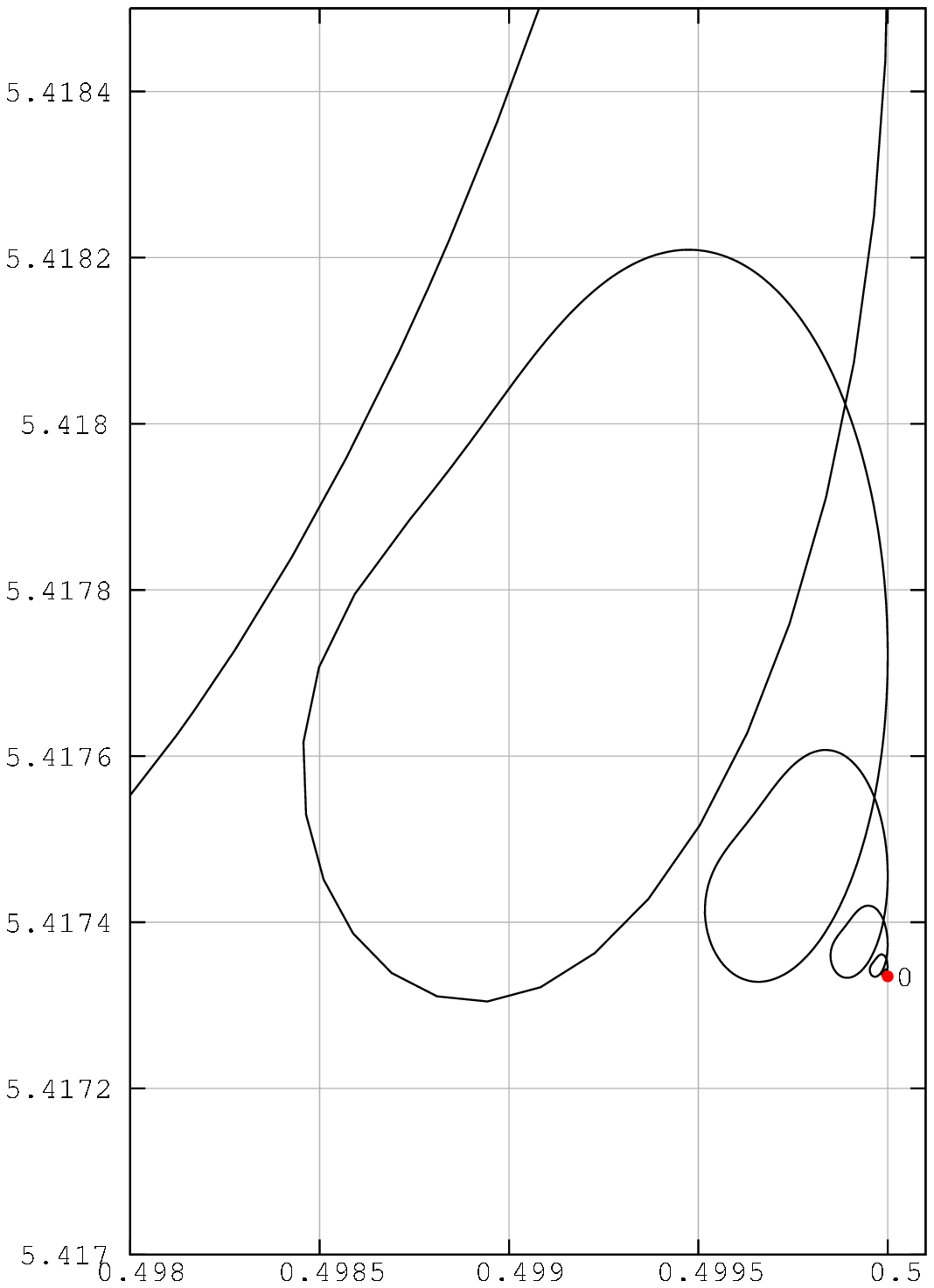}}
\end{center}
\caption{Enlargement of the initial part of the curve in
Figure~\protect\ref{fig-loop2}.} \label{fig-loop2-detail}
\end{figure}

We cannot prove that this type of behavior is bound to happen. In
Proposition~\ref{prop-loops} we work under a number of assumptions,
and then can prove some of the properties that can be seen in the
data.

\section{Proofs}\label{sect-prfs}
In this section we prove the theoretical results stated
in~\S\ref{sect-res}. The ingredients from the spectral theory of
automorphic forms that we use are the scattering matrix and a
generalization of it.

In \S\ref{sect-facts}--\ref{sect-esms} we summarize the facts we need.
In \S\ref{sect-cuei}--\ref{sect-loops} we prove the results
in~\S\ref{sect-res}.
\subsection{Facts from spectral theory}\label{sect-facts}
We will need a restricted list of facts from the spectral theory of
automorphic forms. Table~\ref{tab-refs} gives a reference to a
further discussion.

In the spectral theory of automorphic forms the \emph{scattering
matrix} plays an important role. For the unperturbed situation
$\al=0$ it is explicitly known:
\be \label{sm0-expl}
\scm_0(\bt) \=\frac1{2^{2\bt}-1}\, \frac{\Ld(2\bt-1)}{\Ld(2\bt)} \,
\begin{pmatrix} 2^{1-2\bt}& 1-2^{1-2\bt}&1-2^{1-2\bt}\\
1-2^{1-2\bt}&2^{1-2\bt}&1-2^{1-2\bt}\\
1-2^{1-2\bt}&1-2^{1-2\bt}&2^{1-2\bt}
\end{pmatrix}\,, \ee
where $\Ld(s) = \pi^{-s/2}\, \Gf(s/2)\, \z(s)$ is the completed
Riemann zeta-function. \fact\label{fact-unpert} The zeros of
$Z(0,\cdot)$ in the region $\re \bt<\frac12$ and $\im\bt>0$ are the
values at which one of the matrix elements of the scattering matrix
$\scm_0$ has a singularity.
\endfact The occurrence of $\z(2\bt)$ in the denominator of the matrix
elements explains the zeros of $Z(s,\cdot)$ with $\re \bt = \frac14$.
The factor $(2^{2\bt}-\nobreak1)^{-1}$ produces zeros on the line
$\re \bt=0$.

The scattering matrix in~\eqref{sm0-expl} can be embedded in a
meromorphic family of matrices
\be\label{escm} \scm(\al,\bt) \= \begin{pmatrix}
C_{0,0}(\al,\bt)&C_{0,\infty}(\al,\bt)&C_{0,\infty}(\al,\bt)\\
C_{0,\infty}(\al,\bt)& C_{\infty,\infty}(\al,\bt) &
C_{\infty,-1/2}(\al,\bt)\\
C_{0,\infty}(\al,\bt)& C_{\infty,-1/2}(\al,\bt)&
C_{\infty,\infty}(\al,\bt)\end{pmatrix}
\ee
 of $3\times 3$-matrices on $U\times\CC$, where $U$ is a neighborhood
 of $(-1,1)$ in~$\CC$. We call it the \emph{extended scattering
 matrix}. Its construction depends on functional analysis, and is far
from explicit. We will use its properties in
\ref{fact-escm0}--\ref{fact-pert-scm}.

The columns and rows are indexed by $0$, $\infty$ and $-\frac12$
(representatives of the cuspidal orbits of~$\Gm_0(4)$). If all
symmetries visible in the matrix in~\eqref{sm0-expl} would disappear
under perturbation, there would be nine different matrix elements.
However, some of the symmetries survive perturbation:
\fact\label{fact-sym-escm}The extended scattering matrix satisfies
the symmetries indicated by the coinciding matrix entries
in~\eqref{escm}.
\endfact

\fact\label{fact-escm0}The restriction $\bt \mapsto \scm(0,\bt)$
exists and is the scattering matrix $\bt\mapsto \scm_0(\bt)$
in~\eqref{sm0-expl}.\endfact We note that a meromorphic function
$(\al,\bt)\mapsto f(\al,\bt)$ on an open set of $\CC^2$ may have a
singularity at $(\al_0,\bt_0)$ that is not visible as a singularity
of $\bt \mapsto f(\al_0,\bt)$. (Consider for instance
$f(\al,\bt) = \frac{\al-\bt}{\al+\bt}$ at $(0,0)$.) Such
singularities are said to be of \emph{indeterminate type}.
\fact\label{fact-us-escm}Let $\bt_0\in \frac12+i[0,\infty)$. If the
extended scattering matrix has a singularity at $(0,\bt_0)$, then
$Z(0,\bt_0)=0$ and $\bt_0\neq\frac12$.
\endfact In \eqref{sm0-expl} we see that such a singularity has
necessarily indeterminate type.

There are functional equations: \fact\label{fact-fe} We have
\bad \scm(-\al,\bt) &\= \scm(\al,\bt)^t\,,&\quad \scm(\al,1-\bt)&\=
\scm(\al,\bt)^{-1}\,,\\
\overline{\scm(\bar \al,1-\bar\bt)} &\=
\bigl(\scm(\al,\bt)^t\bigr)^{-1}\,. \ead
as identities of meromorphic families of matrices on $U\times\CC$.
\endfact
\medskip

For the perturbed situation there is also a scattering ``matrix'',
with size $1\times 1$. Unlike the scattering matrix for $\al=0$, we
have no explicit formula for it. However it can be expressed in the
matrix elements of the extended scattering matrix.
\fact\label{fact-pert-scm}
Let $X$, $C_+$ and $C_-$ be the meromorphic functions on
$U_+\times \CC$, with $U_+=\{\al\in U\;:\; \re\al>0\}$, given by
\begin{align}\label{X-def}
 X(\al,\bt) &\= (\pi\al)^{2\bt-1} \, \Gf\bigl( \frac12-\bt\bigr)\,
 \Gf(\bt-\frac12\bigr)^{-1} \,,\\
\label{Cpm-def}
C_\pm(\al,\bt)&\=C_{\infty,\infty}(\al,\bt) \pm
C_{\infty,-1/2}(\al,\bt)\,.
\end{align}
The meromorphic function
\be\label{D-def}
 D_{0,0}(\al,\bt)\= \frac{ C_{0,0}(\al,\bt) - X(\al,\bt)\,\bigl(
C_{0,0}(\al,\bt)\, C_+(\al,\bt) - 2 \,
C_{0,\infty}(\al,\bt)^2\bigr)}{1-X(\al,\bt)\, C_+(\al,\bt)} \ee
on~$U_+\times\CC$, has a meromorphic restriction $D_\al$ to the
complex line $\{\al\}\times\CC$ for each $\al\in (0,1)$.

For $\al\in (0,1)$ the zeros of the Selberg zeta-function $Z(\al,\bt)$
with $\im \bt>0$ satisfy $\re\bt\leq \frac12$. Those of these zeros
that satisfy $\re\bt<\frac12$ are the values of $\bt$ at which
$D_\al(\bt)$ has a singularity.
\endfact

We note that the existence of the restriction to $\{\al\}\times\CC$ is
a non-trivial assertion. It says that the meromorphic
function~$D_{0,0}$ has no singularity along this complex line.

\fact\label{fact-odd-eq} If $C_-$ is holomorphic at
$(\al,\bt) \in (0,1)\times \left(\frac12+i\RR\right)$ and
\[ X(\al,\bt) \, C_-(\al,\bt) \= 1\,,\]
then $Z(\al,\bt)=0$.
\endfact

\def\ftn{\eqref{sm0-expl}& \cite{Hu84}}
\def\fta{\ref{fact-unpert}& \S\ref{sect-zSz}}
\def\ftb{\ref{fact-sym-escm}&\eqref{escm-2} and \S\ref{sect-ks}}
\def\ftc{\ref{fact-escm0}&\S\ref{sect-fam}}
\def\ftd{\ref{fact-us-escm}&\S\ref{sect-ecm-upcf}}
\def\fte{\ref{fact-fe}& \eqref{fea},\; \eqref{fec},\; \eqref{feMS}}
\def\ftf{\ref{fact-pert-scm}&\S\ref{sect-zSz} and \S\ref{sect-fam},
especially \eqref{UDUi}}
\def\ftg{\ref{fact-odd-eq}& \S\ref{sect-cpPs}}
\begin{table}
\renewcommand\arraystretch{1.2}
\begin{tabular}{|cl|cl|}\hline
\ftn & \ftd \\
\fta & \fte \\
\ftb & \ftf \\
\ftc & \ftg
\\ \hline
\end{tabular}\smallskip
\caption{The places in~\S\protect\ref{sect-spth} where a reference or a proof
is given for the facts in~\S\protect\ref{sect-facts}.}\label{tab-refs}
\end{table}

\subsection{The extended scattering matrix}\label{sect-esms} The
functional equations in \ref{fact-fe} imply that $\scm(\al,\bt)$ is a
unitary matrix if $\al\in (-1,1)$ and $\bt\in \frac12+i\RR$. This
implies that if $\al_0\in (-1,1)$ then the matrix elements of
$\scm(\al_0,\bt)$ are bounded. So the restriction
$\bt\mapsto \scm(\al_0,\bt)$ cannot have singularities on the line
$\{\al_0\}\times \left(\frac12+i\RR\right)$ as a function of the
variable~$\bt$. Nevertheless, the matrix elements can have
singularities at $(\al_0,\bt_0)$ with $\re\bt_0=\frac12$ as functions
of the two complex variables $(\al,\bt)$.

The extended scattering matrix can be partly diagonalized:
\badl{escm-split} \Um \, \scm \, \Um^{-1} &\= \begin{pmatrix} \scm^+ &
0\\ 0 & C_-\end{pmatrix}\,,
\qquad \Um \=
\begin{pmatrix}1&0&0\\0&\frac1{\sqrt2}&\frac1{\sqrt2}\\
0&-\frac1{\sqrt2}&\frac1{\sqrt2}\end{pmatrix}\,,\\
\scm^+ &\= \begin{pmatrix} C_{0,0} & \sqrt 2 \, C_{0,\infty} \\
\sqrt 2\, C_{0,\infty} & C_+ \end{pmatrix}\,. \eadl
For $(\al,\bt) \in (-1,1)\times\left(\frac12+i\RR\right)$ the matrix
$\scm^+(\al,\bt)$ is unitary, and $\bigl| C_-(\al,\bt)\bigr|=1$. The
functional equations in \ref{fact-fe} imply similar relations for
$\scm^+$. For $\al=0$ we have
\be\label{ucu0} \Um \,\scm_0(0,\bt) \,\Um^{-1} \=
\frac{\Ld(2\bt-1)}{\Ld{2\bt}}\,\begin{pmatrix}
\frac{ 2^{1-2\bt} }{2^{2\bt}-1} & \frac{ \sqrt 2(1-2^{1-2\bt})
}{2^{2\bt}-1} & 0 \\
\frac{ \sqrt 2(1-2^{1-2\bt}) }{2^{2\bt}-1} & \frac{ 1 }{2^{2\bt}-1} &
0 \\
0 & 0 & \frac{ 2^{2(1-\bt)}-1 }{2^{2\bt}-1}
\end{pmatrix}\,. \ee
See~\eqref{UCUi}.

\subsection{Curves of eigenvalues}\label{sect-cuei} For curves of
eigenvalue, {\sl i.e.}, zeros of the Selberg zeta-function that stay
on the central line, the fact \ref{fact-odd-eq} is important.

\begin{lem}\label{lem-S-}
There is a simply connected set
$S_{\!-} \subset (0,1) \times \bigl( \frac 12+\nobreak i(0,\infty)\bigr)$
in which $C_- $ has no singularities. For each bounded interval
$I \subset (0,\infty)$ there is $\e_I \subset (0,1)$ such that
$(0,\e_I)\times\bigl( \frac12+\nobreak iI\bigr)\subset S_{\!-}$.
\end{lem}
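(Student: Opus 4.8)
The goal is to produce a simply connected region $S_-\subset (0,1)\times\bigl(\tfrac12+i(0,\infty)\bigr)$ on which the meromorphic function $C_-=C_{\infty,\infty}-C_{\infty,-1/2}$ is holomorphic, together with the control that any bounded interval $I$ on the critical line is covered, for small enough $\al$, by a strip $(0,\e_I)\times\bigl(\tfrac12+iI\bigr)$ inside $S_-$. First I would record the structural input: by \ref{fact-escm0} the restriction $\bt\mapsto\scm(0,\bt)$ equals $\scm_0(\bt)$, and by the block decomposition~\eqref{escm-split} together with~\eqref{ucu0}, the restriction of $C_-$ to $\al=0$ is $C_-(0,\bt)=\dfrac{\Ld(2\bt-1)}{\Ld(2\bt)}\cdot\dfrac{2^{2(1-\bt)}-1}{2^{2\bt}-1}$. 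On the line $\re\bt=\tfrac12$ this function is finite: the factor $\Ld(2\bt-1)/\Ld(2\bt)$ is $1$ in modulus there and its only possible poles on $\re\bt=\tfrac12$ come from zeros of $\Ld(2\bt)$, i.e.\ from zeros of $\z(2\bt)$, none of which lie on $\re\bt=\tfrac12$; likewise $2^{2\bt}-1$ has no zeros with $\re\bt=\tfrac12$ (its zeros lie on $\re\bt=0$). Hence $\bt\mapsto C_-(0,\bt)$ is holomorphic and nonvanishing (indeed of modulus $1$) on the whole half-line $\tfrac12+i(0,\infty)$.

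\textbf{Construction of $S_-$.} The singularities of the meromorphic function $C_-$ on $U_+\times\CC$ form an analytic hypersurface $\Sigma$. The previous paragraph shows $\Sigma$ does not meet the real-codimension-$2$ set $\{0\}\times\bigl(\tfrac12+i(0,\infty)\bigr)$. Fix an exhaustion of $(0,\infty)$ by bounded closed intervals $I_n$. For each $n$, compactness of $\{0\}\times(\tfrac12+iI_n)$ together with the fact that $\Sigma$ is closed and disjoint from it gives a neighborhood, which I can take of product form $(-\rho_n,\rho_n)\times V_n$ with $V_n$ an open complex neighborhood of $\tfrac12+iI_n$, on which $C_-$ is holomorphic; intersecting with $(0,1)\times\bigl(\tfrac12+i(0,\infty)\bigr)$ and shrinking $V_n$ to a thin tube around the segment $\tfrac12+iI_n$, I get a product set $(0,\e_{I_n})\times W_n$ free of singularities, with $W_n$ a simply connected (e.g.\ convex) neighborhood of $\tfrac12+iI_n$ in the half-line sense. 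Then I would set
\be
S_- \= \bigcup_n \bigl(0,\e_{I_n}\bigr)\times W_n\,,
\ee
after arranging the $\e_{I_n}$ and $W_n$ to be nested/compatible so that the union is connected; since each piece is a product of an interval with a convex (hence simply connected) planar set and consecutive pieces overlap in a connected set, the union is connected and simply connected. By construction, given any bounded interval $I\subset(0,\infty)$, choosing $n$ with $I\subset I_n$ yields $\e_I\isdef\e_{I_n}$ and $(0,\e_I)\times\bigl(\tfrac12+iI\bigr)\subset S_-$, which is the second assertion.

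\textbf{Main obstacle.} The delicate point is \emph{not} the topology of the union but the assertion that the singular set of $C_-$, as a meromorphic function of the two complex variables $(\al,\bt)$ near $\al=0$, genuinely misses $\{0\}\times(\tfrac12+iI)$ — one must rule out singularities of indeterminate type (cf.\ the discussion after \ref{fact-escm0}). Here the safeguard is the unitarity statement in \S\ref{sect-esms}: for $\al\in(-1,1)$ and $\bt\in\tfrac12+i\RR$ one has $|C_-(\al,\bt)|=1$, so $C_-$ is bounded on the real-analytic totally real submanifold $(-1,1)\times(\tfrac12+i\RR)$; a bounded-near-a-point meromorphic function of several variables extends holomorphically there, so $C_-$ has no singularity at any $(\al_0,\bt_0)$ with $\al_0\in(-1,1)$, $\re\bt_0=\tfrac12$ — in particular none along $\{0\}\times(\tfrac12+i(0,\infty))$. (The same boundedness also re-derives holomorphy on all the strips used above, so the compactness argument only needs that $\Sigma$ is closed.) With that in hand the rest is routine point-set topology, and the lemma follows.
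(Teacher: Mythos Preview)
There is a genuine gap in your argument. Your key claim in the ``Main obstacle'' paragraph --- that a meromorphic function of two complex variables which has modulus~$1$ on a maximal totally real submanifold cannot have a singularity at any point of that submanifold --- is false. In the present coordinates, take
\[
g(\al,\bt)\;=\;\frac{\al-(\bt-\tfrac12)}{\al+(\bt-\tfrac12)}\,:
\]
this has an indeterminate singularity at $(0,\tfrac12)$, yet $|g(\al,\tfrac12+it)|=1$ for all real $\al,t$ with $(\al,t)\neq(0,0)$. Boundedness on a real slice yields holomorphy of the \emph{restriction} to that slice, not of the full two-variable function. The paper is explicit about this distinction: \S\ref{sect-esms} says in so many words that despite unitarity the matrix elements of $\scm$ ``can have singularities at $(\al_0,\bt_0)$ with $\re\bt_0=\tfrac12$ as functions of the two complex variables'', and fact~\ref{fact-us-escm} together with Proposition~\ref{prop-av} treats precisely the scenario where $C_-$ carries such an indeterminate singularity at a point $(0,\bt_0)$ on the central line. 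So your step ruling out all singularities of $C_-$ along $\{0\}\times\bigl(\tfrac12+i(0,\infty)\bigr)$ --- and with it the compactness argument producing the strips --- does not go through.

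The paper's route is different and does not attempt to exclude these singularities. Instead it observes that indeterminate singularities of a quotient $f/g$ of holomorphic functions on an open set in $\CC^2$ (with $f,g$ coprime as germs) occur only where the zero sets of $f$ and $g$ meet, and such an intersection has complex dimension~$0$; hence the bad set is \emph{discrete} in $[0,\tfrac12]\times\bigl(\tfrac12+i[0,\infty)\bigr)$. For a given bounded $I$ only finitely many bad points lie in $(0,\tfrac12]\times\bigl(\tfrac12+i[0,\max I]\bigr)$, so one takes $\e_I$ strictly below all their $\al$-coordinates and assembles $S_-$ from the resulting strips. Your staircase construction would then work verbatim once you replace the incorrect ``no singularities'' input with this discreteness statement.
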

\begin{proof}If $f$ and $g$ are non-zero holomorphic functions on an
open subset $U\subset\CC^2$ without common factor in the ring of
germs of holomorphic functions at $p\in U$ then their null sets
intersect each other in an analytic subset of~$U$ that has
dimension~$0$ near~$p$. (See \cite{GR}. Chap.~5, \S2.4 implies that
the null sets have dimension~$1$ at~$p$. If their intersection would
also have dimension~$1$ at~$p$, then any prime component of this
intersection in the Lasker-Noether decomposition (\cite{GR}, p.~79)
would be given by a common factor of $f$ and~$g$ in the ring of germs
at~$p$.) So the quotient $ f/g$ can have singularities of
indeterminate type only at a discrete set of points in~$U$. Applying
this to $C_-$ the set of points to avoid is discrete in
$\bigl[0,\frac12\bigr]\times\bigl( \frac12+\nobreak i[0,\infty)\bigr)$.
Let $t_I=\max I$. We take $\e_I$ equal to the minimum of the $\al$
for which $(\al,\bt)$ is one of the finitely many points of
indeterminacy of $C_-$ in $\bigl(0,\frac12\bigr]\times\bigl(
\frac12+\nobreak i[0,t_I]\bigr)$.
\end{proof}

We consider the equation $X\, C_-=1$ in \ref{fact-odd-eq} in the set
$S_-$. The function
\be\label{Y-def}
Y_-(\al,\bt) \= \frac{\Gf(\bt-\frac12)}{\Gf(\frac12-\bt\bigr)}\, \frac
1{C_-(\al,\bt)} \ee
is holomorphic at all points of $S_-$, and has absolute value~$1$ at
the points of~$S_-$. The equation $X \, C_-=1$ on $S_-$ is equivalent
to
\[ (\pi \al)^{2\bt-1} \= Y_-(\al,\bt)\,.\]
We can choose a continuous argument $A_-(\al,\bt)$ of $Y_-(\al,\bt)$
on $S_-$, since this set is simply connected. The function $1/C_-$
may have singularities (of indeterminate type) at points of
$\{0\}\times\left(\frac12+i\RR\right)$. If that occurs then the
continuous extension of $A_-$ to
$\{0\}\times\left(\frac12+i\RR\right)$ minus the points where $1/C_-$
is singular does not have a constant difference with a continuous
argument of
\[Y_-\bigl(0,\frac12+it\bigr) \= \pi^{2it}\,\frac{2^{1+2it}-1}
 {2^{1-2it}-1}\, \frac{\z(-2it)}{\z(2it)}\,.\]
See \eqref{sm0-expl}. {}From \ref{fact-us-escm} we see that $Y_-$ is
holomorphic at~$(0,\frac12)$. It has value~$1$ at $(0,\frac12)$. We
normalize $A_-$ such that its continuous extension has value $0$
at~$(0,\frac12)$. We find the Taylor approximation
\be\label{phi-as}
A_-\bigl(\al,\frac12+it\bigr) \= 2t\log\frac 4\pi +
\oh(t^2)+\oh(\al)\qquad\text{ as }(\al,t)\rightarrow(0,0)\,. \ee

With this preparation, we can reformulate the equation $X\, C_-=1$
in~$S_-$ as
\be\label{-eq} 2t\,\log\pi\al \= A_-\bigl(\al,\frac12+it\bigr) - 2\pi
k\,, \qquad(k\in \ZZ)\,. \ee
We have written $\bt=\frac12+it$.

\begin{proof}[Proof of Theorem~\ref{thm-ei-0}.] The formulation
in~\eqref{-eq} shows that the solution set of $X\, C_-=1$ in~$S_-$ is
the disjoint union of components $V_k$, parametrized by the
integer~$k$ in~\eqref{-eq}.

We take $\e_1>0$, $\e_2>0$ such that
$\Om=(0,\e_1) \times\left( \frac12+i(0,\e_2) \right) \subset S_-$ and
such that $|A_-|<\pi$ on~$\Om$. In the course of the proof we will
impose a finite number of additional conditions on~$\e_1$ and~$\e_2$.

Equation~\eqref{-eq} implies that for
$(\al,\frac12+\nobreak it) \in V_k\cap \Om$ we have
\[ e^{-(C+\pi k)/t} \,\leq\, \pi\al\,\leq\, e^{(C-\pi k)/t}\]
for some $C\leq \frac\pi 2$. On the basis of this first estimate we
proceed more precisely, and obtain
\be\label{a-1} 2t\log\frac{\pi^2\al}4 \= -2\pi k + \oh(t^2)+\oh(\al)
\= -2\pi k+\oh(t^2)\,, \ee
and conclude
\be \al \= \frac 4{\pi^2}\, e^{-\pi k/t}\, \bigl( 1+\oh(t)\bigr)\,.
\ee
If $k\leq 0$ this does not allow small values of $\al$ for
$t\in (0,\e_2)$. Hence $k\geq 1$.

To show that $V_k \cap\Om$ is the graph of a function, we apply the
implicit function theorem. The set $V_k$ is the level set
$F(\al,t)=-2\pi k$ of the function $F(\al,t) =  2 t
\log\pi\al-A_-\bigl(\al,\frac12+\nobreak it\bigr)$, with derivatives
\[ \frac{\partial F}{\partial\al} \= \frac{2t}\al+\oh(1)\,,\qquad
\frac{\partial F}{\partial t} \= 2\log\frac{\pi^2\al}4 +
\oh(t)+\oh(\al)
\,.\]
So $\frac{\partial F}{\partial\al}>0$ and
$\frac{\partial F}{\partial t}<0$ if we take $\e_1$ and $\e_2$
sufficiently small. So $V_k \cap \Om$ is the graph of an injective
function $\al\mapsto \frac12+i\tau_k(\al)$ on $(0,\e_1)$. Since $F$
is a real-analytic function, the analytic implicit function theorem
shows that $\tau_k$ is real-analytic. (See, {\sl e.g.}\, \cite{KrPa},
Theorem~6.1.2.)

{}From~\eqref{a-1} it follows that for $\al\in (0,\e_1)$, with $\e_1$
sufficiently small,
\[\tau_k(\al)\= \oh\bigl( k/\log(\pi^2\al/4) \bigr)\,.\]
and then
\be \tau_k(\al) \= \frac{-\pi k+\oh\bigl(
\tau_k(\al)^2\bigr)}{\log\frac{\pi^2\al}4} \= \frac{\pi
k}{-\log\frac{\pi^2\al}4}\,\biggl(1 + \oh\Bigl(
k^2/\log(\pi^2\al/4)^2\Bigr)\biggr)\,. \ee
This gives~\eqref{as-0}.
\end{proof}

\begin{proof}[Proof of Theorem~\ref{thm-ei-ln}.] Let
$I\subset (0,\infty)$ be an interval as in the theorem.
Lemma~\ref{lem-S-} provides us with $\e_I$ such that
$(0,\e_I)\times \left(\frac12+iI\right)\subset S_-$. Since
$\{0\}\times\left(\frac12+iI\right)$ does not contain singularities
of~$C_-$ the function $A_-$ is continuous and hence bounded on
$[0,\e_1]\times \left(\frac12+iI\right)$ for $0<\e_1< \e_I$.

The graph $\tau_k$ is the level curve $F=-2\pi k$ of the function
\be\label{Fd}
 F(\al,t) \= 2t\,\log\pi\al - A_-\bigl(\al,\frac12+it\bigr)\,, \ee
and hence the graphs of $\tau_k$ for different values of~$k$ do not
 intersect each other in $S_-$. The asymptotic relation~\eqref{as-0}
shows that $\tau_k(\al)<\tau_{k_1}(\al)$ if $k<k_1$ for sufficiently
small~$\al$. Since the graphs have no intersections this relation is
preserved throughout the intersection of the domains of $\tau_k$
and~$\tau_{k_1}$. The Selberg zeta-function is not the zero function,
the eigenvalues $\tau_k(\e_1)$ form a discrete set, with only
finitely elements under~$\max I$. We take $k(\e_1)$ such that
$\tau_k(\e_1)>\max I$ for all $k\geq k(\e_1)$.

Take $t\in I$ and $k\geq k(\e_1)$. The function $F$ is equal to
$-2\pi k$ along the graph of $\tau_k$. We have
$\lim_{\al\downarrow 0} F(\al,t)=-\infty$, and $F(\al,t)$ is larger
than $-2\pi k$ under the graph of $\tau_k$. In particular
$F_k(\e_1,t)>-2\pi k$ since $\tau_k(\e_1)>\max I$. Differentiation
gives
\[ \frac d{d\al} F_k(\al,t) \= \frac{2t}\al + \oh(1)\,.\]
So the derivative of $\al\mapsto F_k(\al,t)$ is positive for
$\al\in(0,\e_1]$ if we take $\e_1$ sufficiently small. So there is a
unique $a_k(t) \in (0,\e_1]$ such that
$\tau_k\bigl( a_k(t)\bigr) = t$. This function $a_k$ inverts $\tau_k$
on~$I$.
\[\setlength\unitlength{1cm}
\begin{picture}(4,4)
\put(0,0){\line(1,0){4}}
\put(0,0){\line(0,1){4}}
\put(3,0){\circle*{.1}}
\put(0,2){\circle*{.1}}
\put(0,3.5){\circle*{.1}}
\put(3.1,.1){$\e_1$}
\put(4.1,0){$\al$}
\put(-.3,2.7){$I$}
\put(3.2,3.8){$F=-2\pi  k$}
\put(.1,3.0){$t$}
\put(0,2.9){\line(1,0){3}}
\thicklines
\put(0,2){\line(0,1){1.5}}
\qbezier(0,0)(0,2)(.8,3)
\qbezier(.8,3)(1.3,3.6)(3,4)
\end{picture}
\]

The estimate
\[ 2 t \, \log\pi a_k(t) + 2\pi k \= A_-\bigl(0,\frac12+it\bigr) +
\oh\bigl( a_k(t) \bigr)\]
is uniform for $t\in I$. It implies
$\log a_k(t) = -\frac{\pi k}t + \oh(1)$ uniform for $t\in I$ and
$k\geq k(\e_1)$, and hence
\[ \pi a_k(t) \= \exp\bigl(-\pi k/t+ A_-(0,1/2+it)/2t+ \oh(e^{-\pi
k/t})\bigr)
\,.\]
The function $\ph_-$ in the theorem is continuous on $[0,\infty)$. The
argument $A_-\bigl(0,\frac12+\nobreak it\bigr)$ differs from it by
$2\pi k_I$ for some integer depending on the interval~$I$. (More
precisely, depending on the component of $I$ in $[0,\infty)$ minus
the singularities of $Y_-$.)
This gives~\eqref{alk-as}.
\end{proof}
\smallskip\par
\noindent\emph{Avoided crossing.} For
$(\al,\bt)=\bigl(\al,\frac12+it\bigr)$ in the set $S_-$ in
Lemma~\ref{lem-S-} the gradient of $F$ is
\[ \nabla F (\al,t)\= \begin{pmatrix} 2t /\al \\ 2 \log\pi \al
\end{pmatrix} - \nabla A_-(\al,1/2+it)\,. \]
On the sets considered in Theorem~\ref{thm-ei-ln} the gradient of
$A_-\bigl(\al,\frac12+it\bigr)$ is $\oh(1)$. If $t = \tau_k(\al)$,
then $\nabla F(\al,\bt)$ is proportional to
$\bigl(\tau_k'(\al),-1\bigr)$, and hence
\[ \tau_k'(\al) \=\frac{2t\al^{-1}+\oh(1)}{- 2\,\log\pi\al+\oh(1)} \=
\frac t{\al\, |\log\pi \al|} \,\bigl(1+\oh(1/\log\al|) \bigr)\,. \]
This confirms that the graphs of the $\tau_k$ are steep for
small~$\al$.

If we are near a singularity of $C_-$ at
$(0,\bt_0)\in \{0\}\times \left(\frac12+i(0,\infty\right)$ this
reasoning is no longer valid. In \ref{fact-us-escm} we see that this
can only occur if $\bt_0$ is an unperturbed eigenvalue. For
$(\al,\bt)\in S_-$ we have $|C_-(\al,\bt)|=1$. So if $C_-$ has a zero
or a pole at $(\al,\bt)$ near $(0,\bt_0)$ with $\al$ real, then
$\re\bt\neq \frac12$.

It seems hard to analyze this precisely for a complicated singularity.
Hence we work under simplifying assumptions.
\begin{prop}\label{prop-av}Let $\bt_0=\frac12+it_0$ with $t_0>0$. We
assume that on a neighborhood $\Om$ of $(0,\bt_0)$ in $\CC^2$ the
matrix element $C_-$ of the extended scattering matrix has the form
\be C_-(\al,\bt) \= \ld(\al,\bt)\, \frac{\bt-\bt_0-n(\al)}
{\bt-\bt_0-p(\al)}\,, \ee
where $n $ and $p$ are holomorphic functions on a neighborhood of $0$
in~$\CC$ and $\ld$ is holomorphic on~$\Om$ without any zeros. We
suppose furthermore that $n(0)=p(0)=0$, and $\re p''(0)\neq 0$.

Then there are $\e_0>0$ and $k_0\geq 1$ such that for all $k\geq k_0$
there exists $\al_k\in (0,\e_1]$ such that
$\tau_k(\al_k) = t_0+\im  p(\al_k)$, and for these $\al_k$ we have
\be\tau_k'(\al_k) \= \im p'(\al_k) - \frac12 t_0\,\re p'(\al_k) +
\oh(\al_k^2)\,. \ee
\end{prop}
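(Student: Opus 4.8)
The plan is to run, near the ``bad'' point $(0,\bt_0)$, the reduction already used for Theorems~\ref{thm-ei-0} and~\ref{thm-ei-ln}. On the real critical line $|X|=|C_-|=1$ (for $X$ since $(\pi\al)^{2it}$ and $\Gf(-it)/\Gf(it)$ are unimodular, for $C_-$ by the unitarity recalled in~\S\ref{sect-esms}), so $XC_-=1$ from~\ref{fact-odd-eq} is equivalent to the real phase equation
\[
 F(\al,t)\;\in\;-2\pi\ZZ,\qquad
 F(\al,t)\=2t\log\pi\al-2\arg\Gf(it)+\arg C_-\bigl(\al,\tfrac12+it\bigr),
\]
and $\tau_k$ is the component $\{F=-2\pi k\}$, with $\arg C_-=\arg\ld+\arg\bigl(i(t-t_0)-n(\al)\bigr)-\arg\bigl(i(t-t_0)-p(\al)\bigr)$. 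Two preliminary points. By the remark before the proposition (zeros and poles of $C_-$ with $\al$ real stay off the critical line), $\re p(\al)\neq0$ for small real $\al\neq0$, and with $p(0)=0$, $\re p''(0)\neq0$ this forces $\re p$ to keep a constant sign on some $(0,\e_0)$. And the unitarity $|C_-|^{2}=1$ gives, as an identity in $(\al,t)$,
\[
 |\ld|^{2}\bigl(\re n(\al)^{2}+(t-t_0-\im n(\al))^{2}\bigr)\=\re p(\al)^{2}+(t-t_0-\im p(\al))^{2}\,,
\]
whence $\nu(\al)\isdef i\im p(\al)-n(\al)$ satisfies $|\nu(\al)|=|\re p(\al)|/|\ld|$ on the curve $t=t_0+\im p(\al)$, so $\nu$ is real-analytic, not identically $0$, with a nonzero leading Taylor coefficient at $\al=0$.

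For the existence of $\al_k$ the key is to restrict $F$ to the auxiliary curve $t=t_0+\im p(\al)$: there $i(t-t_0)-p(\al)$ is the \emph{real} number $-\re p(\al)$, so its argument is a locally constant multiple of $\pi$ (again using $\re p''(0)\neq0$), and $\arg\nu(\al)$ extends continuously across $\al=0$ with bounded derivative since $\nu(\al)=\al^{m}\tilde\nu(\al)$ with $\tilde\nu(0)\neq0$. Hence $\al\mapsto F\bigl(\al,t_0+\im p(\al)\bigr)$ is real-analytic on $(0,\e_0)$, has derivative $\tfrac{2t_0}{\al}\bigl(1+o(1)\bigr)>0$, and $\to-\infty$ as $\al\downarrow0$. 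By the intermediate value theorem each large $k$ gives a unique $\al_k\in(0,\e_0)$ with $F\bigl(\al_k,t_0+\im p(\al_k)\bigr)=-2\pi k$, and $\al_k\downarrow0$. Since $F_t\neq0$ at these points (its $\al_k^{-1}$- or $\al_k^{-2}$-singular part does not vanish), $\{F=-2\pi k\}$ is locally the graph of a function, which for $k$ large is the graph of $\tau_k$ from Theorem~\ref{thm-ei-0}; hence $\tau_k(\al_k)=t_0+\im p(\al_k)$.

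For the derivative one uses $\tau_k'(\al_k)=-F_\al/F_t$ at $\bigl(\al_k,t_0+\im p(\al_k)\bigr)$. Differentiating the three summands of $F$ and specialising to the curve (so $i(t-t_0)-p(\al)=-\re p(\al)$, $i(t-t_0)-n(\al)=\nu(\al)$) gives
\[
 F_\al\=\frac{2t}{\al}+\im\frac{-n'(\al)}{\nu(\al)}-\frac{\im p'(\al)}{\re p(\al)}+\oh(1),\qquad
 F_t\=2\log\pi\al+\re\frac1{\nu(\al)}+\frac1{\re p(\al)}+\oh(1),
\]
the $\oh(1)$ collecting the bounded contributions of $\arg\ld$ and $\arg\Gf$. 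Using $\re\nu(\al)=-\re n(\al)$ and $|\nu(\al)|^{2}=\re p(\al)^{2}/|\ld|^{2}$ from the identity above, the singular parts of $F_\al,F_t$ can be re-expressed in $\re p(\al)$, $\re n(\al)$, $|\ld|$ alone; one then expands in $\al_k$, discards the $\log$- and $\oh(1)$-terms, and checks that the quotient $-F_\al/F_t$ collapses to $\im p'(\al_k)-\tfrac12 t_0\,\re p'(\al_k)+\oh(\al_k^{2})$.

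The step I expect to be the main obstacle is precisely this last collapse: on the auxiliary curve several terms in $F_\al$ and $F_t$ blow up as $\al_k\downarrow0$, and it is only the unitarity identity for $|C_-|$---which pins down both the order of vanishing and the leading coefficient of $\nu$ in terms of $p$ and $\ld$---that makes $-F_\al/F_t$ bounded with the asserted expansion; reaching the error $\oh(\al_k^{2})$ requires carrying the second-order Taylor data of $n$, $p$, $\ld$ through the calculation. (In the spectral situation $\bt_0+p(\al)$ continues a real eigenvalue of a self-adjoint family, so $p'(0)\in i\RR$ and $\re p(\al)=\tfrac12\re p''(0)\,\al^{2}+\oh(\al^{3})$; this is what makes the remaining terms line up.) A small additional point is identifying the local numbering of the components $\{F=-2\pi k\}$ near $(0,\bt_0)$ with the global numbering of the $\tau_k$, done by following the non-singular level curves out of the neighbourhood.
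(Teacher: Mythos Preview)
Your framework is the same as the paper's: write the eigenvalue condition as the phase equation $F(\al,t)\in-2\pi\ZZ$, restrict to the curve $t=t_0+\im p(\al)$ to get existence of $\al_k$, and compute $\tau_k'(\al_k)=-F_\al/F_t$ at these points. That is exactly the right shape. The genuine gap is in the ``collapse'' step, and it is not merely a matter of bookkeeping.

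You work only with the unitarity $|C_-|=1$ on the real critical line. The paper uses two stronger structural facts from \ref{fact-fe}: that $C_-$ is \emph{even} in~$\al$, so $p$ and $n$ are even, and that $\overline{C_-(\bar\al,1-\bar\bt)}=C_-(\al,\bt)^{-1}$, which forces $n(\al)=-\overline{p(\bar\al)}$. For real~$\al$ this means $\re n=-\re p$ and $\im n=\im p$, hence your $\nu(\al)=i\im p(\al)-n(\al)$ equals $\re p(\al)$ exactly, and the whole argument reduces to
\[
A_-\bigl(\al,\tfrac12+it\bigr)\;\equiv\;2\arg\bigl(p(\al)-i(t-t_0)\bigr)+\oh(1),
\]
with the $\oh(1)$ having bounded derivatives. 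All of $n$ and $\ld$ disappear from the singular part of~$\nabla F$, and the quotient $-F_\al/F_t$ involves only $p_r,p_i,p_r',p_i'$. The evenness gives $p(\al)=\oh(\al^2)$ and $p'(\al)=\oh(\al)$ (not just $\re p'(0)=0$, which is what your self-adjointness remark suggests), and this is what makes the leading term come out as $\al_k\bigl(p_i''(0)-\tfrac12 t_0 p_r''(0)\bigr)=p_i'(\al_k)-\tfrac12 t_0 p_r'(\al_k)+\oh(\al_k^2)$.

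Without $n=-\bar p$ your singular pieces of $F_\al,F_t$ involve $n'$ and $\nu$ separately from $p'$ and $\re p$; the identity $|\nu|=|\re p|/|\ld|$ that you extract from unitarity fixes only a modulus, not the phase data needed to eliminate $n'$. The asserted formula depends on $p'$ alone, so some relation tying $n$ to $p$ is unavoidable, and unitarity on the real locus by itself does not supply it. Also, the $\oh(\al_k^2)$ error in the statement needs $p'(\al_k)=\oh(\al_k)$, i.e.\ the full evenness, not just $\re p'(0)=0$. Once you invoke the two functional equations from \ref{fact-fe}, the computation is short and is exactly the one in the paper; your route via $\nu$ and the unitarity identity then becomes unnecessary.
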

\rmk\label{rmk-av} Suppose that $\bt_0\in \frac12+i(0,\infty)$ is an
unperturbed eigenvalue, {\sl i.e.}, $Z(0,\bt_0)=0$. Then it might be
associated to a singularity at $(0,\bt_0)$ of the extended scattering
matrix, as in~\ref{fact-us-escm}. This singularity might be visible
as a singularity of the coefficient $C_-$. For that case, the
assumptions in the proposition seem to describe the most general
situation. If these assumptions are satisfied then there is the curve
\[ K_{\bt_0}:\al\mapsto \bigl(\al,t_0+ \im p(\al) \bigr)\]
through $(0,t_0)$ such that the derivatives of the $\tau_k$ for all
large~$k$ are relatively small at the points where the graph of
$\tau_k$ crosses the curve~$K_{\bt_0}$.
(See also Remark~\ref{rmk-odd} in~\S\ref{sect-concl}.)

\begin{proof}[Proof of Proposition~\ref{prop-av}.]The assumption that
$C_-$ has a singularity at $\bt_0$ implies that the functions $p$ and
$n$ cannot be equal. {}From \ref{fact-fe} and~\eqref{Cpm-def} it
follows that $C_-(\al,\bt)$ is even in~$\al$. This evenness is
inherited by the zero set and the set of singularities. Hence $p$ and
$n$ are even functions. We also have
$\overline{C_-(\bar\al,1-\bar \bt)}=C_-(\al,\bt)^{-1}$. This implies
$n(\al) = -\overline{p(\bar \al)}$. For real $\al$ we write
$p_r(\al) = \re p(\al)$ and $p_i(\al) = \im p(\al)$. Hence we have
for $\al\in (0,\e_1)$ and $t\approx t_0$:
\[ Y_-\bigl(\al,\frac12+it\bigr)\= \ld\bigl(\al,\frac12+it\bigr)^{-1}
\frac{ \Gf(it)\,\bigl( -p_r(\al) +i(t-t_0 - p_i(\al))\bigr)
}{\Gf(-it)\, \bigl( p_r(\al)+i(t-t_0-p_i(\al))
\bigr)
}\]
So modulo $2\pi \ZZ$:
\be\label{phi-s} A_-\bigl(\al,\frac12+it\bigr)\;\equiv\; 2
\arg\bigl(p(\al)-i(t-t_0)\bigr)+\oh(1)\,, \ee
where the term indicated by $\oh(1)$ has also bounded derivatives. So
the gradient with respect to the variables $\al$ and $t$ is
\be \nabla A_-\bigl(\al,\frac12+it\bigr)\= \oh(1) + \im
\begin{pmatrix}
\frac{2\,p'(\al)}{p(\al)-i(t-t_0)} \\ \frac{-2i}{p(\al)-i(t-t_0)}
\end{pmatrix}\,. \ee

The function $\al\mapsto F\bigr(\al,t_0+\nobreak p_i(\al)\bigr)$, with
$F$ as in~\eqref{Fd}, tends to $-\infty$ as $\al\downarrow 0$, and
has derivative
\begin{align*} 2\,p_i'(\al)\,\log\pi\al &+ \frac{2\,
(t_0+p_i(\al))}{\al}
- \im \frac{2\, p'(\al)}{p_r(\al)} - \im \frac{-2i}{p_r(\al)}\,
p_i'(\al)
+ \oh(1)\\
&\= \frac{ 2t_0 + 2p_i(\al) }\al +
\frac{-2\,p_i'(\al)+2p_i'(\al)}{p_r(\al)} +\oh(1)
\= \frac{2t_0}\al + \oh( 1)\,,
\end{align*}
where we use that $p(\al)=\oh(\al^2)$ and $p'(\al)=\oh(\al)$ as
$\al\downarrow 0$, since $p$ is an even function vanishing at~$0$. So
there is an interval $(0,\e_1]$ on which
$\al\mapsto F\bigl(\al,t_0+\nobreak p_i(\al)\bigr)$ is increasing.
Hence for all sufficiently large integers~$k$ there are
$\al_k\in (0,\e_1]$ such that
$F(\al_k,t_0+\nobreak p_i(\al_k)\bigr)=-2\pi k$, and then
$\tau_k(\al_k)=t_0+p_i(\al_k)$.

We have $2\, \tau_k(\al_k) \,\log\pi\al_k \= -2\pi  k + \oh(1)$, since
the argument in \eqref{phi-s} stays bounded in a neighborhood of
$(0,\bt_0)$. So $\log\pi\al_k=\frac{-\pi k}{t_0}+ \oh(1)$ as
$k\rightarrow\infty$, and hence $\al_k\downarrow 0$.

We have, again using $p(\al)=\oh(\al^2)$ and $p'(\al)=\oh(\al)$,
\begin{align*}
\nabla F\bigl( \al_k&,\frac12+i\tau_k(\al_k) \bigr) \=
\begin{pmatrix}\frac{2\,\tau_k(\al_k)}{\al_k} - \im \frac{2\,
p'(\al_k)}{p_r(\al_k) }
\\
2\log\pi\al_k - \im \frac{-2i}{p_r(\al_k)}
\end{pmatrix} + \oh(1)\\
&\= \begin{pmatrix} \frac{2t_0}{\al_k} - \frac{2\,
p_i'(\al_k)}{p_r(\al_k)}
+\oh(1)\\ \frac 2{p_r(\al_k)} +\oh(\log\al_k)
\end{pmatrix}
\,.
\end{align*}
Since the graph of $\tau_k$ is a level curve of~$F$ the gradient
$\nabla F\bigl( \al_k,\frac12+i \tau_k(\al_k)\bigr)$ is orthogonal to
$\begin{pmatrix}
1\\\tau_k'(\al_k)\end{pmatrix}$. We use
$p(\al) =\frac12\, p''(0)\,\al^2+\oh(\al^3)$ and
$p'(\al) =  p''(0)\,\al + \oh(\al^2)$ as $\al\rightarrow0$, and
obtain:
\begin{align}
\nonumber
 \tau_k'(\al_k) &\= - \frac{2\,t_0/\al_k -2\, p_i'(\al_k)/p_r'(\al_k)
 + \oh(1)}{2/p_r(\al_k) + \oh\bigl( \log\al_k\bigr)}\\
\nonumber
&\= \frac{-2\,t_0\, \al_k + 4 \, \al_k\,
p_i''(0)/p_r''(0)+\oh(\al_k^2)}{4/p_r''(0)+\oh(\al_k^2\log\al_k)}\\
&\= \al_k\bigl(-\frac12 t_0\, p_r''(0)+ p_i''(0)\bigr)+ \oh(\al_k^2)
\= p_i'(\al_k) - \frac {t_0}2 p_r'(\al_k) + \oh(\al_k^2)\,.
\end{align}
\end{proof}

\subsection{Curves of resonances originating at
$\bigl(0,\frac12\bigr)$} To find resonances for $\al\in (0,1)$ we
have to look for singularities of the scattering ``matrix''
$D_\al(\bt)$ in~\ref{fact-pert-scm}. If we work on a region where the
extended scattering matrix has no singularities this means that we
look for solutions of $X(\al,\bt) \, C_+(\al,\bt)=1$ with the
requirement that the resulting singularity of $D_{0,0}(\al,\bt)$ is
not canceled by a zero of the numerator in~\eqref{D-def}.

\begin{prop}\label{prop-dn}Let $\Om$ be a region in
$(0,1)\times \{\bt\in \CC \;:\; \im\bt>0\}$ that is invariant under
$(\al,\bt)\mapsto (\al,1-\nobreak\bar\bt)$ (reflection in the central
line). Suppose that the matrix $\scm^+$ in \eqref{escm-split} is
holomorphic on a neighborhood op $\Om$ in~$\CC^2$.

The denominator $M=1-X \, C_+$ in the expression for $D_{0,0}$
in~\eqref{D-def} vanishes at $(\al,1-\nobreak\bar\bt)$, if and only
if the numerator
$N=C_{0,0}-X\,\left( C_{0,0}\, C_+-2\, C_{0,\infty}^2\right)$
vanishes at $(\al, \bt)$.
\end{prop}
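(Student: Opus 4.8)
The plan is to read the statement off the functional equations in \ref{fact-fe} after passing to the partially diagonalized form \eqref{escm-split}. Write $\bt^\ast=1-\bar\bt$ for the reflection in the central line; by hypothesis $\Om$ is invariant under $(\al,\bt)\mapsto(\al,\bt^\ast)$, and since $\Om\subset(0,1)\times\{\im\bt>0\}$ the number $\al$ is real and positive on $\Om$. Everything rests on the two algebraic identities
\[ M\=1-X\,C_+\,,\qquad N\=C_{0,0}-X\,\det\scm^+\,, \]
where $\det\scm^+=C_{0,0}C_+-2C_{0,\infty}^2$ is the determinant of the symmetric $2\times2$-block $\scm^+$ of \eqref{escm-split}. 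Thus the assertion is equivalent to: $X\,C_+=1$ at $(\al,\bt^\ast)$ if and only if $C_{0,0}=X\,\det\scm^+$ at $(\al,\bt)$.

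The first step is to translate the functional equations of \ref{fact-fe} into statements about $\scm^+$. Conjugating with the real orthogonal matrix $\Um$ of \eqref{escm-split} and using that $\scm^+$ is symmetric gives $\scm^+(\al,1-\bt)=\scm^+(\al,\bt)^{-1}$, and for real $\al$ also $\overline{\scm^+(\al,\bt^\ast)}=\scm^+(\al,\bt)^{-1}$. The second identity settles a point of rigour first: taking determinants gives $\overline{\det\scm^+(\al,\bt^\ast)}=\det\scm^+(\al,\bt)^{-1}$, so a zero of $\det\scm^+$ at a point of $\Om$ would force a pole of $\det\scm^+$ at its $\ast$-image, which again lies in $\Om$; as $\scm^+$ is holomorphic on a neighbourhood of $\Om$ this is impossible, so $\det\scm^+$ is zero-free on $\Om$ and the matrix inverses below are legitimate. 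Since the lower-right entry of the inverse of a symmetric $2\times2$-matrix equals its upper-left entry divided by its determinant, comparing the lower-right entries in $\overline{\scm^+(\al,\bt^\ast)}=\scm^+(\al,\bt)^{-1}$ gives
\[ C_+(\al,\bt^\ast)\=\overline{C_{0,0}(\al,\bt)}\,\big/\,\overline{\det\scm^+(\al,\bt)}\,. \]
I also need $X$ at $\bt^\ast$: a direct computation from \eqref{X-def}, using that $\pi\al>0$ and $\Gf(\bar z)=\overline{\Gf(z)}$, gives $X(\al,\bt^\ast)=\overline{X(\al,\bt)}^{-1}$.

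Finally I would substitute these two facts into $M$ at $(\al,\bt^\ast)$:
\[ M(\al,\bt^\ast)\=1-X(\al,\bt^\ast)\,C_+(\al,\bt^\ast)\=1-\overline{C_{0,0}(\al,\bt)}\,\big/\,\overline{X(\al,\bt)\,\det\scm^+(\al,\bt)}\,, \]
and, since $N=C_{0,0}-X\det\scm^+$ gives $\overline{C_{0,0}(\al,\bt)}=\overline{N(\al,\bt)}+\overline{X(\al,\bt)\det\scm^+(\al,\bt)}$, this collapses to
\[ M(\al,\bt^\ast)\=-\,\overline{N(\al,\bt)}\,\big/\,\overline{X(\al,\bt)\,\det\scm^+(\al,\bt)}\,. \]
Because $X(\al,\bt)\,\det\scm^+(\al,\bt)$ is finite and nonzero at every point of $\Om$, this shows that $M$ vanishes at $(\al,\bt^\ast)$ exactly when $N$ vanishes at $(\al,\bt)$, which is the proposition. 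I expect the only genuine obstacle to be the non-vanishing of $\det\scm^+$ on $\Om$ — once that is in hand, the rest is bookkeeping with the functional equations — and this is exactly where the holomorphy of $\scm^+$ near $\Om$ and the $\ast$-invariance of $\Om$ are used; the only other point requiring care, the complex conjugate of the branch $(\pi\al)^{2\bt-1}$, is harmless because $\pi\al$ is positive.
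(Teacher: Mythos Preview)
Your proof is correct and follows essentially the same route as the paper's: both arguments rest on the functional equation $\overline{\scm^+(\al,1-\bar\bt)}=\scm^+(\al,\bt)^{-1}$ (the paper writes $(\scm^+{}^t)^{-1}$, equivalent here by symmetry), the resulting identity $\overline{C_+(\al,1-\bar\bt)}=C_{0,0}(\al,\bt)/\det\scm^+(\al,\bt)$, the relation $\overline{X(\al,1-\bar\bt)}=X(\al,\bt)^{-1}$, and the non-vanishing of $\det\scm^+$ on~$\Om$ via holomorphy and reflection-invariance. Your explicit formula $M(\al,\bt^\ast)=-\overline{N(\al,\bt)}\big/\overline{X(\al,\bt)\det\scm^+(\al,\bt)}$ is a slightly sharper packaging than the paper's chain of equivalences, but the substance is the same.
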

\begin{proof}The function $\Dt=C_{0,0}\, C_+-2\, C_{0,\infty}^2$ is
the determinant of the matrix~$\scm^+$. It follows from \ref{fact-fe}
that $\overline{\scm^+(\al,1-\nobreak
\bar \bt)} = \bigl(\scm^+(\al,\bt)^t)^{-1}$ for $(\al,\bt)\in \Om$.
If $\Dt$ would have a zero at $(\al,\bt)\in\Om$, this would
contradict the holomorphy of $\scm^+$ on~$\Om$. Furthermore,
$\overline{X(\al,1-\bar\bt)} = X(\al, \bt)^{-1}$.

We have
\[ \overline{C_+(\al,1-\bar\bt)} \= \text{coefficient at position
$(2,2)$ of }\bigl(\scm^+(\al,\bt)^t\bigr)^{-1} \=
\frac{C_{0,0}(\al,\bt)}{\Dt(\al,\bt)} \]
Since $\Dt(\al,\bt)\in \CC^\ast$ we have equivalence of the following
assertions:
\begin{align*}
 X(\al,1-\bar \bt) \, C_+(\al,1-\bar \bt) &\= 1\,,&\quad
 \overline{C_+(\al,1-\bar \bt)} &\= \overline {X(\al,1-\bar
 \bt)^{-1}}\,,\\
C_{0,0}(\al,\bt)/\Dt(\al,\bt)&\= X(\al,\bt)\,,&\quad C_{0,0}(\al,\bt)
&\= X(\al,\bt)\, \Dt(\al,\bt)\,.
\end{align*}
\end{proof}
\rmk So zeros and singularities of $D_{0,0}(\al,\bt)$ are interchanged
by the reflection in the central line. The meromorphic function
$D_{0,0}$ is not the zero function, since its restriction to the
complex lines $\{\al\}\times\CC$ for $\al\in (0,1)$ are scattering
``matrices'', which are non-zero. So its sets of zeros and poles
intersect each other only in a discrete set in
$U_+\times\CC$.\bigskip

We now consider the equation $1=X\, C_+$, in a region where $\scm^+$
has no singularities. Analogously to~\eqref{Y-def}, we put
\be\label{Y+def} Y_+( \al, \bt) \=
\frac{\Gf(\bt-\frac12)}{\Gf(\frac12-\bt)}\, \frac 1{C_+(\al,\bt)}\,.
\ee
This is a meromorphic function on $U\times \CC$, and the equation
$X\, C_+=1$ on $U_+\times\CC$ is equivalent to
\be\label{Y+eq} (\pi\al)^{2\bt-1}\= Y_+(\al,\bt)\,. \ee
A complication is that now we cannot restrict our consideration to a
subset of $(0,1)\times\left( \frac12+i(0,\infty)\right)$, but have to
allow $\bt$ to vary over a neighborhood of the central line. The
presence of singularities of $C_+$ makes it impossible to choose a
well defined argument globally.

\begin{lem}\label{lem+I}
Let $I\subset [0,\infty)$ be a bounded closed interval such that $C_+$
in has no singularities at points $\bigl(0,\frac12+\nobreak it\bigr)$
with $t\in I$. There are $\e_1,\e_2>0$ such that the solution set of
\eqref{Y+eq} in
\be\label{Omee}
\Om_I(\e_1,\e_2)\= (0,\e_1] \times \Bigl(
\bigl[\frac12-\e_2,\frac12+\e_2\bigr]\times iI\Bigr)\ee
consists of sets $V_k$ parametrized by $k\in \ZZ$.

There exists $k_1\in \ZZ$ such that $V_k$ is for all $k\geq k_1$ a
real-analytic curve
\[ t\mapsto \bigl( \al_k(t),\, \s_k(t)+it\bigr)\qquad(t\in I)\,.\]
\end{lem}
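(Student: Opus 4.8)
The plan is to mimic closely the structure of the proof of Theorem~\ref{thm-ei-0} and Lemma~\ref{lem-S-}, but now working with the equation $X\,C_+=1$ near the whole strip $[\frac12-\e_2,\frac12+\e_2]\times iI$ instead of on the central line. First I would note that, by Proposition~\ref{prop-dn} together with Remark after it, the zeros and singularities of $D_{0,0}$ are discrete in $U_+\times\CC$, so for the purpose of locating resonances it suffices to solve $X(\al,\bt)\,C_+(\al,\bt)=1$ away from this discrete set; on a sufficiently small $\Om_I(\e_1,\e_2)$ this is harmless. Since $I$ contains no $t$ with a singularity of $C_+$ at $(0,\frac12+it)$, and the singularities of $\scm^+$ form an analytic set of codimension one, I can choose $\e_1,\e_2>0$ small enough that $\scm^+$, hence $C_+$, hence $Y_+$ in~\eqref{Y+def}, is holomorphic and nonvanishing on a neighborhood of the compact set $[\frac12-\e_2,\frac12+\e_2]\times iI$ intersected with $\{\al=0\}$, and then by compactness on all of a neighborhood of $\Om_I(\e_1,\e_2)$ for $\e_1$ small. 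On such a simply connected region I can pick a continuous branch $\log Y_+(\al,\bt)$, and rewrite~\eqref{Y+eq} as $(2\bt-1)\log(\pi\al)=\log Y_+(\al,\bt)+2\pi i k$ for $k\in\ZZ$; this already exhibits the solution set as the disjoint union over $k$ of the level sets $V_k$ of a fixed holomorphic function, proving the first assertion.

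Next I would address why $V_k$ is, for all large $k$, the graph of a real-analytic curve $t\mapsto(\al_k(t),\s_k(t)+it)$. Write $\bt=\s+it$ and separate the equation $(2\bt-1)\log\pi\al=\log Y_+(\al,\bt)+2\pi ik$ into its real and imaginary parts. The imaginary part reads $2t\log\pi\al=\im\log Y_+(\al,\bt)+2\pi k + \text{(terms from }(2\s-1)\cdot 0)$ — more precisely $2t\log\pi\al-(2\s-1)\cdot\arg(\cdot)$, but since $\log\pi\al$ is real the split is clean: the imaginary part is $2t\,\log\pi\al = \im\log Y_+(\al,\bt) - 2\pi k$ up to sign conventions, and the real part is $(2\s-1)\log\pi\al=\re\log Y_+(\al,\bt)$. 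On $\Om_I(\e_1,\e_2)$ the function $\log Y_+$ and all its derivatives are $\oh(1)$ (bounded), while $\log\pi\al\to-\infty$; so from the imaginary part one gets $\log\pi\al = -\pi k/t + \oh(1)$ uniformly, forcing $\al$ exponentially small in $k$ and in particular $k\geq k_1$ for the solution to lie in $(0,\e_1]$. Then I would apply the (real-analytic) implicit function theorem to the $\RR^2$-valued map
\[
G(\al,\s,t)\=\bigl(\,2t\log\pi\al-\im\log Y_+(\al,\s+it)\,,\ (2\s-1)\log\pi\al-\re\log Y_+(\al,\s+it)\,\bigr)
\]
at level $(-2\pi k,0)$: the Jacobian of $G$ with respect to $(\al,\s)$ is
\[
\begin{pmatrix} 2t/\al+\oh(1) & 2\log\pi\al+\oh(1)\\ (2\s-1)/\al+\oh(1) & 2\log\pi\al+\oh(1)\end{pmatrix},
\]
whose determinant is $\bigl(2t-(2\s-1)\bigr)\,2\log(\pi\al)/\al + \oh(\log\al)$; since $2t-(2\s-1)$ is bounded away from $0$ on $\Om_I$ (as $t\in I$ is bounded away from $0$ and $\s\approx\frac12$), this determinant is nonzero for $\e_1,\e_2$ small. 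Hence near each solution $V_k$ is locally the graph $t\mapsto(\al_k(t),\s_k(t))$ of a real-analytic pair, and a connectedness/monotonicity argument (the map $\al\mapsto G_1(\al,\s,t)$ is increasing in $\al$ for fixed $t$, as in the proof of Theorem~\ref{thm-ei-ln}, using $\partial_\al G_1 = 2t/\al+\oh(1)>0$) shows that for $k\geq k_1$ the whole of $V_k$ in $\Om_I(\e_1,\e_2)$ is a single such graph defined for all $t\in I$.

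The main obstacle I anticipate is the careful bookkeeping needed to guarantee that $\scm^+$ (equivalently $C_+$ and $Y_+$) really is holomorphic and nonvanishing on a full two-dimensional neighborhood of $\Om_I(\e_1,\e_2)$ rather than merely on the central line: one has to invoke that the singular locus and the zero locus of the holomorphic function $Y_+$ on $U\times\CC$ are analytic sets of codimension one (cf.\ the argument in Lemma~\ref{lem-S-} using \cite{GR}), that their intersection with $\{\al=0\}\times(\frac12+iI)$ is empty by hypothesis, and then conclude by compactness of $\frac12+iI$ that a strip $\Om_I(\e_1,\e_2)$ avoiding both loci exists. A secondary point requiring care is ruling out that a genuine singularity of $D_{0,0}$ (a resonance) is canceled by a zero of the numerator $N$ in~\eqref{D-def}; here Proposition~\ref{prop-dn} and the Remark following it give exactly what is needed, since the bad set is discrete and can be absorbed by shrinking $\e_1$. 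Everything else — the branch choice, the estimate $\log\pi\al=-\pi k/t+\oh(1)$, the implicit function theorem, real-analyticity — is routine once the holomorphy-on-a-neighborhood is in place, and the detailed asymptotics~\eqref{asal+t} and~\eqref{assg+t} are deferred to the proof of Theorem~\ref{thm+I} which uses this lemma.
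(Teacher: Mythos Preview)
Your overall strategy is sound and reaches the same conclusion as the paper, but by a genuinely different route. The paper does \emph{not} use the implicit function theorem for this lemma. Instead, after writing the system \eqref{+eq} (which is your real/imaginary split), it performs the change of variables $\al=e^{-1/x}/\pi$, $\bt=\frac12-y+it$ and recasts the system as a fixed-point equation $F_{t,k}(x,y)=(x,y)$ for a map $F_{t,k}$ whose denominators contain $2\pi k-A_+$. For $k$ large this denominator dominates, so $F_{t,k}$ maps a fixed rectangle into itself and has uniformly small partial derivatives; the contraction mapping principle then yields, for each $t\in I$ and each $k\ge k_1$, a \emph{unique} solution $(\al_k(t),\s_k(t))$ in one stroke, with real-analyticity in $t$ following from the analytic dependence of the fixed point on the parameter. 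Your IFT route is perfectly viable, and has the advantage of staying in the original variables; the contraction-mapping route has the advantage that existence, uniqueness over the whole interval $I$, and the a priori confinement to $\Om_I(\e_1,\e_2)$ come out simultaneously, without a separate continuation/monotonicity argument.

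Two concrete points to fix in your write-up. First, your Jacobian entry $\partial G_1/\partial\s$ is wrong: since $G_1=2t\log\pi\al-\im\log Y_+(\al,\s+it)$ and $t$ is the independent parameter, one has $\partial G_1/\partial\s=-\partial_\s\im\log Y_+=\oh(1)$, not $2\log\pi\al+\oh(1)$. The correct determinant is therefore $\bigl(2t/\al+\oh(1)\bigr)\bigl(2\log\pi\al+\oh(1)\bigr)-\oh(1)\cdot\bigl((2\s-1)/\al+\oh(1)\bigr)=\frac{4t\log\pi\al}{\al}\bigl(1+\oh((\log\al)^{-1})\bigr)$, which is still nonzero for $t>0$ and $\al$ small---so your conclusion survives, but the formula you wrote for the determinant is not the right one. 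Second, the IFT is a local statement; to obtain a single curve defined for \emph{all} $t\in I$ you need more than ``connectedness/monotonicity in $\al$'', since you must show existence for every $t$, not just near a given solution. The cleanest fix is: for fixed $t$, first solve $G_2=0$ for $\s=\tilde\s(\al,t)$ via the nonvanishing of $\partial G_2/\partial\s=2\log\pi\al+\oh(1)$, then observe that $\al\mapsto G_1(\al,\tilde\s(\al,t),t)$ is strictly increasing (derivative $\sim 2t/\al$) and runs from $-\infty$ to a bounded value as $\al$ ranges over $(0,\e_1]$, so it hits $-2\pi k$ exactly once for each large $k$. This is essentially what the paper's contraction argument packages in one step.
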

\begin{proof}
Since $C_+$ is holomorphic at all points of the compact set
$\{0\}\times\left( \frac12+iI\right)$, it has the value given by the
restriction to $\al=0$, which value we know explicitly from
\eqref{sm0-expl} and~\eqref{Cpm-def}:
\[ C_+\bigl(0,\frac12+it\bigr)\=\frac{\pi^{-2it}\,\Gf(it)\,
\z(2it)}{(2^{1+2it}-1)\,\Gf(-it)\,\z(-2it)}\,. \]
So $C_+$ has also no zeros on $\{0\}\times\bigl(\frac12+iI\bigr)$. We
can choose $\e_1,\e_2>0$ such that $C_+$ also has no singularities or
zeros with $\al\in (0,\e_1]$,
$\bigl|\re\bt-\nobreak\frac12\bigr|\leq \e_2$, and $\im\bt\in I$. We
take real-analytic functions on $ \overline{\Om_I(\e_1,\e_2)}$
\bad\label{MA+def} M_+(\al,\bt) &\= \log\bigl|Y_+(\al,\bt)\bigr|\,,\\
  A_+(\al,\bt) &\= \arg Y_+(\al,\bt)\,. \ead
There is freedom in the choice of the argument. In this lemma we do
not choose a normalization.

The solution set of~\eqref{Y+eq} in $\Om _I(\e_1,\e_2)$ is the
disjoint union of components $V_k$ given by
\badl{+eq} 2t \,\log\pi\al &\= A_+(\al,\s+it)-2\pi k\,,\\
(2\s-1)\, \log\pi\al &\= M_+(\al,\s+it)\,. \eadl
Here and in the sequel we write $\s=\re \bt$ and $t=\im\bt$. Changing
the choice of $A_+$ causes a shift in the parameter~$k$.

We want to use the fixed-point theorem to show that for each $t\in I$,
$t>0$, and each $k\geq k_1$ there is exactly one solution
of~\eqref{+eq}. To do this, we write $\al(x)=e^{-1/x}/\pi$ and
$\bt(y,t) = \frac12-y+it$. Then $\al\in
(0,\e_1]$ corresponds to $x\in (0,x_1]$ with $x_1=-1/\log\pi\e_1$, and
$|\re\bt-\nobreak\frac12|\leq \e_2$ to $|y|\leq \e_2$. We take
\be \label{Ftk-def}F_{\!t,k}(x,y) \= \Bigl( \frac{2t}{2\pi k -
A_+\bigl(\al(x),\bt(y,t)\bigr)}, \frac{ t\, M_+\bigl( \al(x),\bt(y,t)
\bigr)}{2\pi k - A_+\bigl(\al(x),\bt(y,t)\bigr)}\Bigr)\,. \ee
By taking $k_1$ sufficiently large, we can make the denominators
in~\eqref{Ftk-def} as large as we want, in particular non-zero. So
$F_{\!t,k}$ is real-analytic on $(0,x_1]\times[-\e_2,\e_2]$. By
defining $\al(0)=0$ we extend $F_{\!t,k}$ to a $C^\infty$-function on
$[0,x_1]\times[-\e_2,\e_2]$.

Since $C_+$ has no zeros or poles in $\Om_I(\e_1,\e_2)$ we have
$M_+=\oh(1)$ and $A_+=\oh(1)$. So for all sufficiently large~$k$ we
have
\be \label{Ftk-incl}
F_{\!t,k}\Bigl(
[0,x_1]\times[-\e_1,\e_1]\Bigr)\;\subset(0,x_1)\times(-\e_2,\e_2)\,.
\ee

To show that $F_{\!t,k}$ is contracting it suffices to bound the
partial derivatives of the two components. For the first component we
have $\left(2\pi k-A_+\right)^2$ in the denominator, which can be
made large. In the numerator we have the derivatives
\[ \partial_x A_+ \= \frac{d\al}{dx}\, \partial_\al A_+ \;\ll\;
e^{-1/x}\, x^{-2}\, \al \;\ll\; 1\,, \qquad
\partial_y A_+ \=-\partial_\s A_+ \= \oh (1)
\]
The factor $\al$ is due to the fact that $C_+$, and hence $A_+$ is
even in~$\al$. The factor $t$ is bounded, since $t\in I$. For the
other component we proceed similarly.

Controlling~$k$, we can make all partial derivatives small. So
$F_{\!t,k}$ is contracting for all $k\geq k_1$ with a suitable~$k$.
So for a given~$t$ there is exactly one point
$(\al,\bt)\in \Om_I(\e_,\e_2)$ satisfying~\eqref{+eq}. the fixed
point is in the region where $F_{\!t,k}$ is real-analytic, jointly in
its variables and in the parameter~$t$. Hence the fixed point is a
real-analytic function of~$t$ by the analytic implicit function
theorem. (See, {\sl e.g.}\, \cite{KrPa}, Theorem~6.1.2.)
\end{proof}
In this lemma, we do not get information concerning the sets $V_k$
with $k$ under the bound~$k_1$. If $0\in I$ we can normalize the
argument $A_+$ like we did in the previous subsection.
\begin{lem}\label{lem+0}For sufficiently small $\e_1,\e_2,\e_3>0$ the
solution set of \eqref{Y+eq} in the set
\[ \Om(\e_1,\e_2,\e_3)\=(0,\e_1]\times \Bigl( \bigl[
\frac12-\e_2,\frac12+\e_2\bigr]\times i(0,\e_3]\Bigr)
\]
is equal to the union of real analytic curves
\[ t\mapsto \bigl(\al_k(t),\s_k(t)+it\bigr)\qquad(t\in [0,\e_3])\,,\]
with $k\geq 1$.
\end{lem}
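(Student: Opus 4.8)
The plan is to run the proof of Lemma~\ref{lem+I} with the interval $I=[0,\e_3]$; the extra ingredients are the holomorphy and non-vanishing of $C_+$ at the point $(0,\frac12)$ itself, the normalization of the argument there, and the behavior of the resulting curves as $t\downarrow0$.

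First I would show that for $\e_3>0$ small the coefficient $C_+$ is holomorphic and zero-free on a neighborhood in $\CC^2$ of the compact set $\{0\}\times\bigl(\frac12+i[0,\e_3]\bigr)$. Holomorphy follows from \ref{fact-us-escm}: a singularity of the extended scattering matrix on $\frac12+i[0,\infty)$ occurs only at an unperturbed eigenvalue, and no such eigenvalue sits at $\frac12$ or (for $\e_3$ below the height $\approx3.70$ of the lowest one) in $\frac12+i(0,\e_3]$. For non-vanishing I use the explicit restriction~\eqref{ucu0}: there $C_+(0,\bt)=\frac{\Ld(2\bt-1)}{\Ld(2\bt)}\,\frac1{2^{2\bt}-1}$, and although $\Ld(2\bt-1)$ and $\Ld(2\bt)$ each have a simple pole at $\bt=\frac12$, the functional equation $\Ld(s)=\Ld(1-s)$ shows their quotient tends to $-1$ there, so $C_+(0,\frac12)=-1\neq0$; on the rest of $\frac12+i[0,\e_3]$ neither $\Ld(2\bt-1)$ (no zero of $\z$ lies near the real axis) nor $2^{2\bt}-1$ vanishes. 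A compactness argument then yields $\e_1,\e_2>0$ with $C_+$ holomorphic and zero-free on a $\CC^2$-neighborhood of $\overline{\Om(\e_1,\e_2,\e_3)}$.

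Given this, $Y_+$ from~\eqref{Y+def} is holomorphic and zero-free on that neighborhood: the quotient $\Gf(\bt-\frac12)/\Gf(\frac12-\bt)$ extends holomorphically across $\bt=\frac12$ with value $-1$ (since $\Gf(w)/\Gf(-w)=-\Gf(1+w)/\Gf(1-w)$), so $Y_+(0,\frac12)=1$; as $\overline{\Om(\e_1,\e_2,\e_3)}$ is simply connected I may take real-analytic $M_+=\log|Y_+|$ and $A_+=\arg Y_+$ with $A_+(0,\frac12)=0$, the normalization announced just before the lemma. Equation~\eqref{Y+eq} then becomes the system~\eqref{+eq}, now with $\bt=\s+it$ over the slab $|\s-\frac12|\le\e_2$, $0<t\le\e_3$. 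For fixed $t\in(0,\e_3]$ and $k\ge1$ I repeat the fixed-point step of Lemma~\ref{lem+I}: substitute $\al=e^{-1/x}/\pi$, $\bt=\frac12-y+it$, form $F_{t,k}$ as in~\eqref{Ftk-def}, and use the evenness of $C_+$ in $\al$ so that each $\al$-derivative brings a factor $\al$ and $e^{-1/x}x^{-2}\al\ll1$. The difference from Lemma~\ref{lem+I} is that here $M_+$ and $A_+$ are themselves small, since they vanish at $(0,\frac12)$; hence after shrinking $\e_1,\e_2,\e_3$ the denominators $2\pi k-A_+$ stay away from $0$ and all partial derivatives of $F_{t,k}$ become small for \emph{every} $k\ge1$, so $F_{t,k}$ is a contraction of $[0,x_1]\times[-\e_2,\e_2]$ into itself, uniformly in $t$, with a unique fixed point that is real-analytic in $t$ by the analytic implicit function theorem. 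As in the proof of Theorem~\ref{thm-ei-0}, a bootstrap refinement of~\eqref{+eq} gives $\log\al_k(t)=-\pi k/t+\oh(1)$ uniformly, so on shrinking $\e_1$ once more, precisely the values $k\ge1$ occur.

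The point beyond Lemma~\ref{lem+I}, and the one I expect to need the most care, is the behavior at $t=0$. At $t=0$ the map $F_{0,k}$ of~\eqref{Ftk-def} collapses to the constant $(0,0)$---both components carry a factor $t$ and the denominator stays away from $0$---so its unique fixed point is $(x,y)=(0,0)$, i.e.\ $\al_k(0)=0$ and $\s_k(0)=\frac12$; uniformity of the contraction in $t$ makes $t\mapsto(\al_k(t),\s_k(t))$ continuous up to $t=0$, with $\al_k(t)\downarrow0$ and $\s_k(t)\to\frac12$ as $t\downarrow0$. Thus the solution set of~\eqref{Y+eq} in $\Om(\e_1,\e_2,\e_3)$ is the disjoint union over $k\ge1$ of the arcs $\{(\al_k(t),\s_k(t)+it):0<t\le\e_3\}$, each continuing to a curve on $[0,\e_3]$ that is real-analytic for $t>0$ and starts at $(0,\frac12)$ when $t=0$. (Because of the factor $e^{-\pi k/t}$ in $\al_k$, this continuation is only $C^\infty$, not real-analytic, at the endpoint; the real-analyticity in the lemma is that on $0<t\le\e_3$.)
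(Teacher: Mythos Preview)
Your approach is essentially the same as the paper's: both run the contraction-map argument of Lemma~\ref{lem+I} on the interval $[0,\e_3]$, both observe that near $(0,\frac12)$ one cannot enlarge the denominator $2\pi k-A_+$ via~$k$ and must instead make $A_+$ and $M_+$ small by shrinking the $\e$'s, and both normalize the argument so that $A_+(0,\frac12)=0$. The paper carries out the contraction estimate in more detail, writing down the explicit Taylor expansion~\eqref{AM+exp} and checking each of the six partial-derivative terms separately; your sentence ``all partial derivatives of $F_{t,k}$ become small'' is correct for the same reason (every term in the Jacobian of $F_{t,k}$ carries a factor~$t$ or a factor~$\partial_\al$ that brings an~$\al$), but you leave the reader to verify this.

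The one place where your write-up is thinner than the paper's is the exclusion of $k\le 0$. Your line ``a bootstrap refinement of~\eqref{+eq} gives $\log\al_k(t)=-\pi k/t+\oh(1)$'' presupposes a solution and tacitly uses $A_+/(2t)=\oh(1)$, which is not immediate from smallness of $A_+$ alone (the ratio could blow up as $t\downarrow0$). The paper instead takes a sequence in $V_k$ tending to $(0,\frac12)$ and uses the expansion~\eqref{AM+exp} directly: for $k\le-1$ the sign of $2t\log\pi\al$ is wrong, and for $k=0$ one gets $\log\pi\al=-\log\pi+o(1)$ along the sequence, contradicting $\al_n\to0$. This is the point that deserves an extra sentence in your version. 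Your final paragraph on the $t=0$ endpoint, noting that the curves extend continuously (indeed $C^\infty$) but not real-analytically to $(0,\frac12)$, is a helpful addition that the paper leaves implicit.
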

\begin{proof}If $\e_1$, $\e_2$ and $\e_3$ are sufficiently small, then
$C_+$ has no singularities or zeros in the closure of
$\Om(\e_1,\e_2,\e_3)$ in $\RR\times\CC$. So $\log Y_+$ can be defined
holomorphically on a neighborhood of $\bigl(0,\frac12\bigr)$ in
$\CC^2$ that contains $\Om(\e_1,\e_2,\e_3)$. We choose the branch
that has the following expansion at $\bigl(0,\frac12\bigr)$:
\be -2\log\pi\,\bigl(\bt-\frac12\bigr) - 4\,(\log 2)^2\,
\bigl(\bt-\frac12\bigr)^2+ \oh\Bigl( \bigl(\bt-\frac12)^3\Bigr) +
\oh\bigl(\al^2\bigr)\,. \ee
For the behavior along $\{0\}\times\CC$ we use~\eqref{sm0-expl}. We
also use that $C_+$ is even in~$\al$. (See \ref{fact-fe}
and~\eqref{escm-split}.)
So in this lemma we can work with
\badl{AM+exp} M_+(\al,\s+it)&\= -2\,\log\pi\, \bigl(\s-\frac12\bigr) -
4\,(\log 2)^2\, \bigl(\s-\frac12\bigr)^2 +4\,(\log 2)^2\, t^2\\
&\qquad\hbox{}
+ \oh\Bigl( \bigl(\bt-\frac12)^3\Bigr) + \oh\bigl(\al^2\bigr)\,,\\
A_+(\al,\s+it)&\= -2t\,\log\pi- 8 \,(\log 2)^2
t\,\bigl(\s-\frac12\bigr)+ \oh\Bigl( \bigl(\bt-\frac12)^3\Bigr) +
\oh\bigl(\al^2\bigr)\,. \eadl
Now the parameter $k$ in the previous lemma can be anchored to this
choice of the argument.

For $t\in (0,\e_3]$ and $k\geq 1$ we define $F_{\!t,k}$ as in
\eqref{Ftk-def}, and revisit the estimates in the proof of the
previous lemma. We cannot use $k$ to make the denominator large.

By adapting the $\e$'s we can make $M_+$ and $A_+$ as small as we want
on~$\Om(\e_1,\e_2,\e_3)$. (See the expansions in \eqref{AM+exp}.) In
particular, we arrange
\[ |A_+|\;\leq\; 2\pi-4\quad\text{ and }\quad |M_+|\;\leq \; 1\,.\]
Then the denominator satisfies
$D\isdef 2\pi k - A_+ \geq 2\pi -A_+ >2$, hence $2t/D < t\leq \e_3$,
and $tM_+/D \leq \e_3/(2\pi-\nobreak 4)$. Arranging
$\e_3< x_1=-1/\log\pi\e_1$ and $\e_3< (2\pi-\nobreak 4)\e_2$, we
get~\eqref{Ftk-incl}.

To get all partial derivatives of $F_{\!t,k}$ small, we have to work
with the numerators of the derivatives, since we have lost control
over the denominators, except for the lower bound~$2$. In the
numerators we meet the following factors:
\begin{align*}
&\frac{\partial A_+}{\partial x}\,,& \quad
&t\,\frac{\partial M_+}{\partial x}\,,&\quad
&t\, M_+ \,\frac{\partial A_+}{\partial x}\,,\\
&\frac{\partial A_+}{\partial y}\,,& \quad
&t\,\frac{\partial M_+}{\partial y}\,,&\quad
&t\, M_+ \,\frac{\partial A_+}{\partial y}\,.
\end{align*}
We have $t=\oh(\e_3)$ and $M_+ = \oh(1)$, so we can concentrate on the
derivatives of $A_+$ and $M_+$ with respect to $x$ and~$y$. Both
derivatives $\frac{\partial A_+}{\partial \al}$ and
$\frac{\partial M_+}{\partial \al}$ are $\oh(\al)$ by~\eqref{AM+exp},
which is controlled by~$\e_1$. Since
$\frac{d\al}{dx} \ll x^{-2} e^{-1/x} = \oh(1)$, all contribution in
the first line can be made small by decreasing $\e_1$ and $\e_2$.

We have $\frac{\partial A_+}{\partial y}
=-\frac{\partial A_+}{\partial \s} = \oh(t) = \oh(\e_3)$.
Further, $\frac{\partial M_+}{\partial y} 
= -\frac{\partial M_+}{\partial \s} = \oh(1)$.
This derivative occurs only multiplied with $t= \oh(\e_3)$. So
adapting $\e_1$ and $\e_2$, and then $\e_3$, taking also into account
the requirements $\e_3<-1/\log\pi\e_1$ and
$\e_3<(2\pi-\nobreak 4)\e_2$, we can arrange that all partial
derivatives are very small on~$\Om(\e_1,\e_2,\e_3)$.

So $F_{\!t,k}$ is contracting. Its fixed point
$\bigl(\al_k(t),\s_k(t)\bigr)$ gives the sole point
$(\al,\bt)\in V_k$ with $\im\bt =t$. It depends on $t$ in a
real-analytic way.\smallskip

Now let $k\leq 0$. Suppose that there is a sequence
$(\al_n,\bt_n)=(\al_n,\s_n+\nobreak it_n)\in V_k$ that tends to
$\bigl(0,\frac12\bigr)$. The expansions in \eqref{AM+exp} imply that
$A_+(\al_n,\bt_n)=o(1)$ and $M_+(\al_n,\bt_n)=o(1)$. Then \eqref{+eq}
implies that $2\, t_n\, \log\pi\al_n$ tends to $-2\pi k$. If we
ensure that $\e_1<\frac1\pi$, we have $\log\pi\al_n \leq 0$, which
shows that $k\leq -1$ is impossible. Let $k=0$. We have by
\eqref{Y+eq} and \eqref{AM+exp}
\[ \log\pi\al_n \= - \log\pi + \oh\bigl( \s_n-\frac12\bigr)\,,\]
in contradiction to $\log\al_n \rightarrow-\infty$.
\end{proof}

\begin{proof}[Proof of Theorem~\ref{thm-res0}.]Lemma~\ref{lem+0} has
given us the solution curves parametrized by~$k\geq 0$. We have to
prove the invertibility of the~$\al_k$, the asymptotic behavior,
possibly further decreasing the~$\e$'s. Then the inequality
$\s_k <\frac12$ follows from~\eqref{as+0sg} (perhaps after adapting
the~$\e$'s).

We consider on of the curves in Lemma~\ref{lem+0}. In the next
computations we omit the index~$k$. Differentiation of the
relation~\eqref{+eq} with respect to~$t$ we get the system
\[\renewcommand\arraystretch{1.2}
\begin{pmatrix} \frac{2t}\al - \frac{\partial A_+}{\partial \al} &
-\frac{\partial A_+}{\partial\s} \\
\frac{2\s-1}\al - \frac{\partial M_+}{\partial\al} & 2\log \pi\al -
\frac{\partial
M_+}{\partial \s}\end{pmatrix}\,
\begin{pmatrix} \dot\al\\
\dot\s\end{pmatrix}
\=
\begin{pmatrix} -2\,\log\al + \frac{\partial A_+}{\partial t}\\
\frac{\partial M_+}{\partial t}\end{pmatrix}\,. \]
Here we consider $A_+$ and $M_+$ as functions of the three variables
$\al$, $\s$ and~$t$. By a dot we indicate differentiating with
respect to~$t$. The determinant of this system is
\[ \Bigl( \frac{2t}\al + \oh(\al) \Bigr)\,\Bigl(2\log\pi\al +
\oh(1)\Bigr)-\oh(\al^{-1})\,\oh(t) \= \frac{4t\,\log\pi\al}\al \,
\Bigl(1+ \oh\bigl( (\log\al)^{-1}\bigr)\Bigr)\,. \] Adapting the
$\e$'s we arrange that this quantity if negative. Then we have
\begin{align*}
\dot\al &\= \frac{\al}{4t\,\log\pi\al}\,
\Bigl(1+\oh\bigl((\log\al)^{-1}\bigr) \Bigr)\\
&\qquad\hbox{} \cdot
\biggl( \Bigl( 2\log\al +
\oh(1)\Bigr)\,\Bigl(-2\,\log\al+\oh(1)\Bigr)+ \oh(t)\, \oh(1)
\biggr)\\
&\= \frac{ \al\, |\log\pi\al|}t\;\Bigl( 1+ \oh\bigl(
(\log\al)^{-1}\bigr)\Bigr)\,.
\end{align*}
So we can arrange that $\al_k'(t)=\dot\al>0$. This shows that the
real-analytic function $t\mapsto \al(t)$ on $[0,\e_3]$ has a real
analytic inverse $t_k$ on some interval
$(\th_k,\z_k]\subset (0,\e_1]$.

In the proof of Lemma~\ref{lem+0} we have already arranged that
$|A_+|<2\pi-4$. Hence $2t\, \log\pi\al <-4$ and
$\log\pi\al < -\frac 2t$. So $\al\downarrow 0$ as $t\downarrow0$,
which shows that $\th_k=0$ for all $k\geq 1$.

To derive the asymptotic expansions, we consider $\s$ and $t$ as
functions of $\al$ along the curve parametrized by~$k\geq 1$. We omit
the subscript $k$ in the computations. We have along this curve
\bad \label{L0rel}
2\,\bigl(\bt-\frac12\bigr)\, \log\pi\al &\= -2\pi i k-2\,\log\pi
\,\bigl(\bt-\frac12\bigr) - 4\,(\log2)^2\,\bigl(\bt-\frac12\bigr)^2\\
&\qquad\hbox{}
+ \oh\Bigl(\bigl(\bt-\frac12\bigr)^3\Bigr) + \oh(\al^2)\,. \ead
This implies that $\left(\bt-\frac12\right)\, \log\pi\al=\oh(1)$,
hence
\[\bt-\frac12 \= \oh\bigl((\log\pi\al)^{-1}\bigr) \=
\oh\bigl((\log\pi^2\al)^{-1}\bigr)\,.\]
Next we get
\[\left( \bt-\frac12\right)\, \log\pi^2\al = -\pi i k - 2\,(\log
2)^2\, \left( \bt-\frac12\right)^2 + \oh\left(
(\log\pi^2\al)^{-3}\right)\,. \]
We write $\ell=\log\pi\al$ and $L=\log\pi^2\al=\ell+\log\pi$, which
are large negative quantities. We obtain
\begin{align*}
\bt&-\frac12 \= \frac{-\pi i k}L - \frac{2\,(\log
2)^2\,(\bt-\frac12)^2}L + \oh\bigl( L^{-4}\bigr)
\displaybreak[0]
\\
&\= -\frac{\pi i k}L - \frac{2\,(\log 2)^2}L\, \Bigl( -
\frac{\pi^2k^2}{L^2}+\oh(L^{-3})\Bigr)
\= - \frac{\pi i k}{\ell+\log\pi} +\frac{2(\pi k \log
2)^2}{(\ell+\log\pi)^3}
+ \oh\bigl( \ell^{-4}\bigr)\displaybreak[0]\\
&\= - \frac{\pi i k}\ell + \frac{\pi i k \,\log\pi}{\ell^2} +
\frac{2(\pi k \log2)^2-\pi i k(\log\pi)^2}{\ell^3}+ \oh(\ell^{-4})\,.
\end{align*}
Taking real and imaginary parts gives the asymptotic relations
\eqref{as+0t} and~\eqref{as+0sg}.
\end{proof}

\begin{proof}[Proof of Theorem~\ref{thm+I}.] Lemma~\ref{lem+I} gives
curves of this type in a region $\Om_I(\e_1,\e_2)$ near
$\{0\} \times\bigl( \frac12+\nobreak iI\bigr)$. In this region $C_+$
has no singularities or zeros, so $\log Y_+$ and its argument $A_+$
can be chosen in a continuous way. There seems no way to connect the
branches of $\log Y_+$ more globally, so we may as well normalize
$A_+$ by $A_+(0,\bt) = A(\bt)$ for $\bt\in \frac12+iI$, where $A$ and
$M$ are as in the theorem. We have on $\Om_I(\e_1,\e_2)$
\bad M_+(\al,\s+it)&\= M\bigl(\frac12+it\bigr) +
\oh\bigl(\s-\frac12\bigr) + \oh(\al^2)\,,\\
A_+(\al,\s+it)&\= A\bigl(\frac12+it\bigr)+ \oh\bigl(\s-\frac12\bigr) +
\oh(\al^2)\,. \ead
The relations \eqref{+eq} hold for points $(\al,\s+\nobreak it)$ on
curves with number $k\geq k_1$. (we omit the index~$k$.)
\[ \log\pi\al \= \oh\bigl(t^{-1}\bigr)\,,\qquad \frac12-\s\= \oh
\bigl( |\log\pi\al|^{-1}\bigr)\,.\]
Working more precisely, we get
\[ 2t\,\log\pi\al \= -2\pi k + A\bigl( \frac12+it\bigr)+ \oh \bigl(
|\log\pi\al|^{-1}\bigr)\,,\]
and hence $\log\pi\al \leq C\, k$ for some positive $C$. This gives
\[ \log\pi\al \= -\frac{\pi k}t + \frac{A\bigl(\frac12+it\bigr)}{2t} +
\oh\bigl(k^{-1}\bigr)\,. \]
This gives~\eqref{asal+t}. We also have
\[ 2\bigl(\s-\frac12\bigr)\Bigl( -\frac{\pi k}t + \oh(1) \Bigr) \=
M\bigl(\frac12+it\bigr) + \oh\bigl(k^{-1}\bigr)\,,\]
hence
\begin{align*}
\bigl(\frac12-\s\bigr) &\= \frac{ M\bigl(\frac12+it\bigr) + \oh\bigl(
k^{-1}\bigr)}{ 2\pi k t^{-1}+\oh(1)
} \= \frac t{2\pi k} \Bigl( M\bigl(\frac12+it\bigr)+\oh\bigl(
k^{-1}\bigr)\Bigr)\,\Bigl( 1 + \oh(t/k) \Bigr)\\
&\= \frac{t\, M\bigl(\frac12+it\bigr)}{2\pi k} + \oh
\bigl(k^{-2}\bigr)\,,
\end{align*}
which is~\eqref{assg+t}.
\end{proof}

\begin{proof}[Proof of Theorem~\ref{thm+tch}.]First we consider
several statements equivalent to the statement that a curve as in
Theorem~\ref{thm+I} touches the central line at a point $t\in I$. Of
course, this is equivalent to $\s_k(t)=\frac12$. In~\eqref{+eq} we
see that it implies that
$M_+\bigl(\al_k(t),\frac12+\nobreak it\bigr)=0$. And since each curve
with $k\geq k_1$ has $t$ as a parameter, touching the central line in
$\frac12+it$ is equivalent to
$M_+\bigl(\al_k(t),\frac12+\nobreak it\bigr)=0$. By~\eqref{MA+def}
and~\eqref{Y+def} this is equivalent to $\bigl|
C_+(\al_k(t),\frac12+\nobreak it)\bigr|=1$. At points with
$\al\in (-1,1)$ and $\re\bt=\frac12$ the matrix $\scm^+(\al,\bt)$
in~\eqref{escm-split} is unitary. So the statement is also equivalent
to $C_{0,\infty}\bigl( \al_k(t) , \frac12+\nobreak it\bigr)=0$.

In~\eqref{sm0-expl} we see that $\bt\mapsto C_{0,\infty}(0,\bt)$ has a
simple zero at $\bt=\bt_\ell\isdef \frac12+it_\ell$. Since $\bt_\ell$
is not a zero of $Z(0,\cdot)$, we know from~\ref{fact-us-escm} that
all matrix elements of $\scm$ are holomorphic at $(0,\bt_\ell)$, in
particular $C_{0,\infty}$ is holomorphic at~$(0,\bt_\ell)$. So we
have $C_{0,\infty}(\al,\bt) = \ld(\al,\bt)
\, P(\bt-\nobreak\bt_\ell,\al)$ on a neighborhood of~$(0,\bt_\ell)$,
where $P$ is a polynomial in $\bt-\bt_\ell$ with coefficients that
are holomorphic in $\al$ on a neighborhood of $0$ in~$\CC$, vanishing
at~$0$, and where $\ld$ is holomorphic without zeros. (This follows
from the Weierstrass preparation theorem. See, {\sl e.g.},
Corollary~6.1.2 in~\cite{Ho}.) The restriction of $C_{0,\infty}$ to
the complex line $\{0\}\times\CC$ has a zero of order~$1$
at~$\bt_\ell$, hence $P(X,\al)=X-i\eta(\al)$, with $\eta$ holomorphic
on a neighborhood of $0$ in~$\CC$ and $\eta(0)=0$. Since
$C_{0,\infty}(\al,\bt)$ is even in~$\al$, its zero set is also
invariant under $\al\mapsto -\al$. Hence $\eta$ is an even function.
{}From~\ref{fact-fe} and~\eqref{escm-split} it follows that
$(\al,\bt) \mapsto \overline{C_{0,\infty}(\bar\al,1-\bar\bt)}$ has
the same zero set as $C_{0,\infty}$. This implies
$\overline{\eta(\bar\al)} = \eta(\al)$. So $\eta(\al)\in \RR$ for
real~$\al$. The power series expansion of $\eta$ at~$0$ starts with
$\eta(\al) = \eta_2\al^2+\cdots$, with $\eta_2\in \RR$.

The asymptotic behavior of $\al_k$ in \eqref{asal+t} shows that the
curve $t\mapsto \bigl( \al_k(t),t)$ and the curve $\al\mapsto
\bigl(\al,t_\ell+\nobreak\eta(\al)\bigr)$ intersect each other for
all sufficiently large~$k$. We call the intersection
point$\bigl( a_k, t_\ell+\nobreak\dt_k)$. So we have
\[ a_k \= \al_k\bigl( t_\ell+\dt_k)\,, \quad \dt_k \= \eta(a_k)\,.\]
Furthermore, $C_{0,\infty}\bigl( a_k,\bt_\ell+i\dt_k\bigr)=0$, hence
the curve with number~$k$ touches the central line at
$\bt_\ell+i\dt_k$. \smallskip

Now we carry out estimates as $k\rightarrow\infty$.

Theorem~\ref{thm+I} gives
$\al_k(t) \ll \exp\bigl( \oh(1)-\pi k/t\bigr)$ uniformly on $I$. In
particular, $a_k = \oh(k^{-n})$ for each $n\geq 0$. In particular
$a_k\downarrow0$. Then $\dt_k = \eta(a_k)$ implies that
$\dt_k = \oh(k^{-2n})$ and also tends to zero.

We have
\begin{align*}
\frac{A(\frac12+it_\ell+i\dt_k)-2\pi k}{2(t_\ell+\dt_k)} &\= \frac
1{2t_\ell} \,\Bigl( 1+ \oh(\dt_k/t_\ell)\Bigr) \, \Bigl(
A\bigl(\frac12+i t_\ell\bigr) -2\pi k+ \oh(\dt_k) \Bigr)\\
&\= \Bigl( \frac{A\bigl(\frac12+it_\ell)-2\pi k}{2t_\ell} + \oh(\dt_k)
\Bigr)\, \Bigl( 1+\oh(\dt_k)\Bigr)\\
&\= \frac{A\bigl(\frac12+it_\ell)-2\pi k}{2t_\ell} + \oh\bigl(
k^{1-2n}\bigr)\,.
\end{align*}
Hence
\begin{align*}
a_k &\= \al_k\bigl(t_\ell+\bt_k)
\= \frac 1\pi\, \exp\Bigl( \frac{A\bigl(\frac12+it_\ell)-2\pi
k}{2t_\ell} + \oh\bigl( k^{1-2n}\bigr) \Bigr)\, \Bigl( 1+
\oh\bigl(k^{-1}\bigr)\Bigr)\\
&\= \frac1\pi \, e^{A(\frac12+it_\ell)/2t_\ell-\pi k/t_\ell}\, \Bigl(
1
+\oh\bigl( k^{1-2n}\bigr)\Bigr)\, \, \Bigl( 1+
\oh\bigl(k^{-1}\bigr)\Bigr)\\
&\= \frac 1\pi\, e^{A(\frac12+it_\ell)/2t_\ell-\pi k/t_\ell}\, \Bigl(
1+ \oh\bigl(k^{-1}\bigr)\Bigr)\,.
\end{align*}
We obtain
\begin{align*}
\dt_k &\= \eta(a_k) \= \eta_2 \, a_k^2 + \oh(a_k^4)
\= \frac{\eta_2}{\pi^2}\, e^{A\bigl(\frac12+it_\ell\bigr)/t_\ell -
2\pi k/t_\ell}\, \Bigl( 1+ \oh\bigl(k^{-1}\bigr) + \oh\bigl(
k^{-4n}\bigr)\,.
\end{align*}
This gives~\eqref{dtk-as}.

We know that $\s_k(t)\leq \frac 12$ for all $t\in I$,
by~\ref{fact-pert-scm}. So the points where the real-analytic curves
touch the central line are tangent points.
\end{proof}

\subsection{Curves of resonances originating higher up on the central
line}\label{sect-loops}We turn to a tentative explanation of curves
of resonances like those in Figure \ref{fig-loop2}
and~\ref{fig-loop2-detail}. We cannot prove that these loops
necessarily exist, and have to be content with a result that depends
on a number of assumptions
\begin{ass}\label{ass}
\ (1) \ Let $\bt_0=\frac12+it_0$ with $t_0>0$. {\em We assume that the
conjugated scattering matrix $\scm^+$ in~\eqref{escm-split} has a
singularity at $(0,\bt_0)$.} So $\bt_0$ is a zero of the unperturbed
Selberg zeta-function $Z(0,\cdot)$ on the central line. (Not all such
unperturbed eigenvalues need to be related to a singularity of
$\scm^+$.)
\par\noindent(2) \ The singularity of $\scm^+$ at $(0,\bt_0)$ is as
simple as possible, with a common denominator for all matrix
elements. To make this precise, {\em we assume that there are
holomorphic functions $p$, $r_{0,0}$, $r_{0,\infty}$ and $r_+$ on a
neighborhood of $0$ in~$\CC$ that all vanish at $\CC$, such that on a
neighborhood $\Om$ of $(0,\bt_0)$ in~$\CC^2$}
\bad \label{Cq} C_{0,0}(\al,\bt)&\=
\gm_{0,0}(\al,\bt)\,\frac{\bt-\bt_0-r_{0,0}(\al)}{\bt-\bt_0-p(\al)}\,,\\
 C_{0,\infty}(\al,\bt)&\=
\gm_{0,\infty}(\al,\bt)\,\frac{\bt-\bt_0-r_{0,\infty}(\al)}
 {\bt-\bt_0-p(\al)}\,,\\
 C_+(\al,\bt)&\=
\gm_+(\al,\bt)\,\frac{\bt-\bt_0-r_+(\al)}{\bt-\bt_0-p(\al)}\,,\\
\ead
{\em where the $\gm$'s are holomorphic on~$\Om$ without zeros
in~$\Om$. }
\par\noindent(3) \ Since $\scm^+(\al,\bt)$ is even in $\al$, the zeros
sets of the matrix elements and the set of singularities are
invariant under $\al\mapsto -\al$. So $p$ and the $r$'s are even
functions. We assume that already the first terms in their power
series expansions are non-zero and all different: {\em $p''(0)$,
$r_{0,0}''(0)$, $r_{0,\infty}''(0)$ and $r_+''(0)$ are four different
non-zero complex numbers.}
\par\noindent(4) \ The restriction of $\scm^+$ to the complex line
$\{0\}\times \CC$ is equal to
\[ \begin{pmatrix} \gm_{0,0}(0,\bt)& \sqrt 2\, \gm_{0,\infty}(0,\bt)\\
\sqrt 2\, \gm_{0,\infty}(0,\bt)& \gm_+(0,\bt)
\end{pmatrix}\,. \]
See~\eqref{ucu0}. {\em We assume that for $\bt=\bt_0$ all elements of
this matrix are non-zero.}
\end{ass}
Most of these assumptions mean that ``nothing special happens'', and
hence seem not too unreasonable. Only the assumption that all matrix
elements have the same set of singularities might be considered to be
really restrictive.

\begin{lem}\label{lem-npr}Under the assumptions~\ref{ass} the
neighborhood $\Om$ of $(0,\bt_0)$ can be chosen such that
\[ \Om \cap \Bigl( \RR \times\bigl( \frac12+i\RR \bigr) \Bigr) \=
\bigl\{(0,\bt_0) \bigr\}\,. \]
\end{lem}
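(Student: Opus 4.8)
The plan is to show that, after shrinking the neighbourhood $\Om$ of Assumption~\ref{ass}, the singular locus of $\scm^+$ inside $\Om$ meets the real slice $\RR\times\bigl(\frac12+i\RR\bigr)$ only in the point $(0,\bt_0)$. By~\eqref{Cq} each entry of $\scm^+$ equals, on $\Om$, a nowhere-vanishing holomorphic function divided by $\bt-\bt_0-p(\al)$, so the singular locus of $\scm^+$ in $\Om$ is contained in the complex-analytic curve $\Sigma=\{(\al,\bt)\in\Om:\bt=\bt_0+p(\al)\}$, and on the part of $\Sigma$ with $\al\neq0$ the entry $C_{0,0}$ has a genuine pole: its numerator there is $p(\al)-r_{0,0}(\al)$, an even holomorphic function vanishing at $0$ whose second derivative $p''(0)-r_{0,0}''(0)$ is nonzero by Assumption~\ref{ass}(3), so it is nonzero for $\al$ near~$0$, $\al\neq0$. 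Hence the statement reduces to proving $\Sigma\cap\bigl(\RR\times(\frac12+i\RR)\bigr)=\{(0,\bt_0)\}$ for $\Om$ small enough, the point $(0,\bt_0)$ itself lying in this set because $p(0)=0$.

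First I would note that any $(\al,\bt)\in\Sigma$ with $\al$ real and $\re\bt=\frac12$ satisfies $\re p(\al)=0$, since $\re\bt=\frac12+\re p(\al)$. The map $\al\mapsto\re p(\al)$ is real-analytic on a real neighbourhood of $0$ and vanishes at $0$. In the generic case it is not identically zero there, so $0$ is one of its isolated zeros, and after shrinking $\Om$ the only real value of $\al$ that can occur is $\al=0$, whence $\bt=\bt_0+p(0)=\bt_0$; this settles that case.

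The case $\re p\equiv0$ on a real neighbourhood of $0$ is, I expect, the only genuine obstacle: then $p$ maps that neighbourhood into $i\RR$, and the whole curve $\al\mapsto\bigl(\al,\bt_0+p(\al)\bigr)$ with $\al\in\RR$ small would lie in $\Sigma\cap\bigl(\RR\times(\frac12+i\RR)\bigr)$. To rule this out I would use the fact recalled in~\S\ref{sect-esms} that for each fixed $\al_0\in(-1,1)$ the matrix $\scm^+(\al_0,\cdot)$ is unitary, hence bounded, on $\{\al_0\}\times\bigl(\frac12+i\RR\bigr)$, and therefore has no singularity there. For small real $\al_0$ the point $\bt_0+p(\al_0)$ lies on the central line, so the prospective pole of $C_{0,0}(\al_0,\cdot)$ at that point must be removable, i.e.\ $p(\al_0)=r_{0,0}(\al_0)$; as this holds for all such $\al_0$, analytic continuation gives $p\equiv r_{0,0}$ on a neighbourhood of $0$ in~$\CC$, contradicting $p''(0)\neq r_{0,0}''(0)$. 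So this case does not arise, and the $\Om$ obtained in the generic case has the required property.
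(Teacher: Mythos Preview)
Your proof is correct. Both you and the paper interpret the lemma as asserting that the \emph{singular locus} of $\scm^+$ inside $\Om$, intersected with $\RR\times(\frac12+i\RR)$, consists only of $(0,\bt_0)$ (the literal statement as displayed is of course impossible for an open $\Om$). Both arguments hinge on the unitarity of $\scm^+(\al_0,\cdot)$ on the central line, which forces any singularity there to be of indeterminate type.

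The routes diverge after that. The paper's proof is short and conceptual: it simply invokes the general fact (established earlier in the proof of Lemma~\ref{lem-S-}) that indeterminate singularities of a meromorphic function of two variables form a discrete set, hence after shrinking $\Om$ only $(0,\bt_0)$ survives. Your proof instead exploits the explicit structure~\eqref{Cq} and the nondegeneracy condition in Assumption~\ref{ass}(3): you parametrize the singular set as $\Sigma=\{\bt=\bt_0+p(\al)\}$, reduce the question to $\re p(\al)=0$ for real $\al$, and in the only nontrivial case ($\re p\equiv 0$ on a real neighborhood) deduce $p\equiv r_{0,0}$ from removability of the pole of $C_{0,0}(\al_0,\cdot)$, contradicting $p''(0)\neq r_{0,0}''(0)$. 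The paper's argument is cleaner and does not use part~(3) of the assumptions at all; yours is more hands-on and in fact isolates exactly the statement $p\not\equiv -\overline{p(\bar\cdot)}$ (i.e.\ $P\neq P^\ast$) that is the sole application of this lemma in the subsequent proof.
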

\begin{proof}Let $\al_1\in \RR$ and $\re\bt_1=\frac12$,
$(\al_1,\bt_1)\in \Om$. The restriction
$\bt\mapsto \scm^+(\al_1,\bt)$ on $\frac12+i\RR$ is a family of
unitary matrices, hence any singularity is of indeterminate type.
Such singularities occur discretely, so taking $\Om$ sufficiently
small the sole possibility is $(\al_1,\bt_1)=(0,\bt_0)$.
\end{proof}

\begin{lem}Under the assumptions~\ref{ass} there is a neighborhood of
$0$ in~$\CC$ such that for all $\al$ in that neighborhood:
\be r_+(\al) \= - \overline{r_{0,0}(\bar \al)}\,,\qquad
r_{0,\infty}(\al) \= - \overline{r_{0,\infty}(\bar \al)}\,. \ee
\end{lem}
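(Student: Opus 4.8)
The plan is to transport the functional equation $\overline{\scm(\bar\al,1-\bar\bt)}=\bigl(\scm(\al,\bt)^t\bigr)^{-1}$ from \ref{fact-fe} to the block $\scm^+$ and then read off a single matrix entry. Since the matrix $\Um$ in \eqref{escm-split} is real and orthogonal and $\scm^+$ is symmetric, that functional equation becomes
\[ \overline{\scm^+(\bar\al,1-\bar\bt)}\,\scm^+(\al,\bt)\=\Id \]
on a neighbourhood of $(0,\bt_0)$ in $\CC^2$ (shrink $\Om$ so that it is invariant under the anti\-holomorphic involution $(\al,\bt)\mapsto(\bar\al,1-\bar\bt)$, which fixes $(0,\bt_0)$ because $1-\overline{\bt_0}=\bt_0$). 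First I would spell out the $(1,2)$-entry of this identity, namely
\[ \overline{C_{0,0}(\bar\al,1-\bar\bt)}\, C_{0,\infty}(\al,\bt) + \overline{C_{0,\infty}(\bar\al,1-\bar\bt)}\, C_+(\al,\bt)\=0\,. \]

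Next I would substitute the parametrisations \eqref{Cq}. The key simplification is that, writing $u=\bt-\bt_0$ and using $1-\overline{\bt_0}=\bt_0$, one has $(1-\bar\bt)-\bt_0=-\bar u$; hence, setting $\tilde f(\al)\isdef-\overline{f(\bar\al)}$ for $f\in\{p,r_{0,0},r_{0,\infty},r_+\}$ and $\hat\gm(\al,\bt)\isdef\overline{\gm(\bar\al,1-\bar\bt)}$ for the corresponding $\gm$'s (which are holomorphic and zero-free near $(0,\bt_0)$ by Assumption~\ref{ass}(2)), one obtains e.g.
\[ \overline{C_{0,0}(\bar\al,1-\bar\bt)}\=\hat\gm_{0,0}(\al,\bt)\,\frac{u-\tilde r_{0,0}(\al)}{u-\tilde p(\al)}\,, \]
and likewise for $C_{0,\infty}$ and $C_+$. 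Multiplying the $(1,2)$-relation by the common denominator $(u-p(\al))(u-\tilde p(\al))$ turns it into a holomorphic identity near $(0,\bt_0)$:
\[ \hat\gm_{0,0}\,\gm_{0,\infty}\bigl(u-\tilde r_{0,0}(\al)\bigr)\bigl(u-r_{0,\infty}(\al)\bigr) + \hat\gm_{0,\infty}\,\gm_+\bigl(u-\tilde r_{0,\infty}(\al)\bigr)\bigl(u-r_+(\al)\bigr)\=0\,. \]
As all four $\gm/\hat\gm$ factors are zero-free near $(0,\bt_0)$, comparing divisors of zeros in $u$ near $u=0$ for each fixed small $\al$ forces the multiset identity $\{\tilde r_{0,0}(\al),\,r_{0,\infty}(\al)\}=\{\tilde r_{0,\infty}(\al),\,r_+(\al)\}$.

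Finally I would disambiguate the two possible pairings. The pairing $\tilde r_{0,0}=\tilde r_{0,\infty}$, $r_{0,\infty}=r_+$ forces $r_{0,0}=r_{0,\infty}$ (apply the involution $f\mapsto\tilde f$), which contradicts $r_{0,0}''(0)\neq r_{0,\infty}''(0)$ from Assumption~\ref{ass}(3) for all sufficiently small $\al\neq0$; so the other pairing $\tilde r_{0,0}(\al)=r_+(\al)$, $r_{0,\infty}(\al)=\tilde r_{0,\infty}(\al)$ holds for all small $\al\neq0$, hence identically near $0$ by the identity theorem. Unwinding $\tilde f(\al)=-\overline{f(\bar\al)}$ gives $r_+(\al)=-\overline{r_{0,0}(\bar\al)}$ and $r_{0,\infty}(\al)=-\overline{r_{0,\infty}(\bar\al)}$, as claimed. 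I expect the only genuinely delicate point to be the bookkeeping that produces the clean holomorphic identity — tracking which arguments carry a bar together with the reflection $\bt\mapsto1-\bt$, and verifying that the substituted $\gm$-factors are really holomorphic and zero-free on the shrunk, involution-invariant $\Om$ so the divisor comparison is legitimate; everything else is formal.
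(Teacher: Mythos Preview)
Your proof is correct and takes a genuinely different route from the paper's.

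The paper first studies $\det\scm^+$: it multiplies by $(\bt-\bt_0-p(\al))^2$, applies Weierstrass preparation to write the result as $\dt(\al,\bt)\,Q(\bt-\bt_0,\al)$, and uses the functional equation for the determinant together with Lemma~\ref{lem-npr} (which gives $P\neq P^\ast$) to pin down the degree of $Q$ and the factorisation $Q=P\,P^\ast$. Only then does it turn to the full matrix relation $\overline{\scm^+(\bar\al,1-\bar\bt)}=\scm^+(\al,\bt)^{-1}$, extracts relations of the form $R_{0,0}^\ast\,(P^\ast)^{\ell-1}=P^{\ell-1}\,R_+$ and $R_{0,\infty}^\ast\,(P^\ast)^{\ell-1}=R_{0,\infty}\,P^{\ell-1}$, and eliminates the cases $\ell=0,2$ using Assumption~\ref{ass}(3) and $P\neq P^\ast$.

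You bypass all of this by reading off the single off-diagonal entry $(1,2)$ of the matrix identity $\overline{\scm^+(\bar\al,1-\bar\bt)}\,\scm^+(\al,\bt)=\Id$. After clearing denominators you get an equality of two holomorphic functions of $u=\bt-\bt_0$, each a nonvanishing unit times an explicit quadratic, so their zero divisors must agree as multisets; Assumption~\ref{ass}(3) then singles out the correct pairing pointwise for every small $\al\neq0$. This is shorter and avoids both the Weierstrass preparation step and Lemma~\ref{lem-npr}. The paper's approach, on the other hand, yields as a by-product the precise structure of $\det\scm^+$ near $(0,\bt_0)$ (namely $\ell=1$, i.e.\ a simple pole and a simple zero), which is not needed for the lemma but clarifies the local picture.
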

\begin{proof}
We have $\det\scm^+ = C_{0,0}\, C_+ - 2 \, C_{0,\infty}^2$. Hence
\begin{align*}
\bigl(\bt&-\bt_0-p(\al)\bigr)^2\, \det\scm^+(\al,\bt)
\\
&\= \gm_{0,0}(\al,\bt)\, \gm_+(\al,\bt) \,
\bigl(\bt-\bt_0-r_{0,0}(\al)\bigr)\, \bigl(\bt-\bt_0-r_+(\al\bigr)\\
&\qquad\hbox{}
- 2\, \gm_{0,\infty}(\al,\bt)^2
\,\bigl(\bt-\bt_0-r_{0,\infty}(\al)\bigr)^2
\end{align*}
is holomorphic on $\Om$, and its restriction to the complex line
$\al=0$ has a zero of at most order~$2$ at $\bt=\bt_0$. We use the
Weierstrass preparation theorem to write it in the form
$\dt(\al,\bt)\, Q(\bt-\nobreak \bt_0,\al)$, with $\dt$ holomorphic
without zeros on $\Om$ and $Q$ a polynomial in its first variable of
degree at most~$2$ with coefficients that are holomorphic functions
of~$\al$ vanishing at $\al=0$, and with highest coefficient equal
to~$1$. So we have
\[ \det \scm^+(\al,\bt) \= \dt(\al,\bt) \, \frac{Q(\bt-\bt_0,\al)}
{P(\bt-\bt_0,\al)^2}\,,\]
where $P(T,\al)=T-p(\al)$. We define an involution $K\mapsto K^\ast$
in the space of polynomials in $T$ with holomorphic coefficients
in~$\al$ by
$K^\ast(T,\al) =(-1)^{\textrm{degree}\,K}\,  \overline{K(-\bar T,\bar \al)}$.
So $P^\ast(T,\al) = T+\overline{p(\bar \al)}$. Lemma~\ref{lem-npr}
implies that $P^\ast\neq P$.

The relation $\overline{\det \scm^+(\bar \al, 1-\bar \bt)} =
\det\scm^+(\al,\bt)^{-1}$, from \ref{fact-fe} and~\eqref{escm-split},
implies
\[ \overline{\dt\bigl(\bar \al,1-\bar\bt\bigr)} \,
\frac{(-1)^{\mathrm{degree}\,Q}\,
Q^\ast(\bt-\bt_0,\al)}{P^\ast(\bt-\bt_0,\al)^2} \=
\dt(\al,\bt)^{-1}\, \frac{P(\bt-\bt-0,\al)^2}{Q(\bt-\bt_0,\al)}\,,\]
and hence
\begin{align*} (-1)^{\mathrm{degree}\,Q}&\, \overline{\dt\bigl(\bar
\al,1-\bar\bt\bigr)}\, \dt(\al,\bt) \, Q^\ast(\bt-\bt_0,\al)\,
Q(\bt-\bt_0,\al)
\\
&\= P^\ast(\bt-\bt_0,\al)^2 \, P(\bt-\bt_0,\al)^2\,.
\end{align*}
On the right is a fourth degree polynomial in $\bt-\bt_0$ with highest
coefficient~$1$. This means that on the left we have also a
polynomial of degree four, and that the highest coefficient is also
equal to~$1$. So the product of the sign and the two $\dt$'s is equal
to~$1$. (We note that not only $\dt(\al,\bt)$ but also
$\dt(\bar\al, 1-\nobreak\bar\bt)$ is non-zero for $(\al,\bt)$
sufficiently close to $(0,\bt_0)$.) Hence
 $Q^\ast\; Q = P^2 \, (P^\ast)^2$, and since $Q^\ast$ and $Q$ have the
 same degree, this degree is equal to~$2$.

The polynomials $P$ and $P^\ast$ are irreducible, hence $Q$ is equal
to one of $P^2$, $P\, P^\ast$, and $(P^\ast)^2$. Hence
\[ \det \scm^+(\al,\bt) \= \dt(\al,\bt) \, \Bigl(
\frac{P^\ast(\bt-\bt_0,\al)}{P(\bt-\bt_0,\al)}\Bigr)^\ell\,,\]
with $\ell\in \{0,1,2\}$.

We also define $R_{0,0}(T,\al) = T-r_{0,0}(\al)$, so
$R_{0,0}^\ast(T,\al) = T + \overline{r_{0,0}(\bar \al)}$, and
similarly for $r_{0,\infty}$ and $r_+$. Considering the relation
$\overline{\scm^+(\bar\al, 1-\bar \bt)} = \scm^+(\al,\bt)^{-1}$
itself we arrive at
\be\label{ar} R_{0,0}^\ast\, (P^\ast)^{\ell-1} \= P^{\ell-1}\, R_+\,,
\qquad R_{0,\infty}^\ast\, (P^\ast)^{\ell-1} \= R_{0,\infty} \,
P^{\ell-1}\,. \ee

If $\ell=0$ we find $R_{0,0}^\ast \, P= R_+\, P^\ast$. Assumption~(3)
implies that $P$ and $R_+$ are different polynomials of the first
degree with highest coefficient~$1$. So $P=P^\ast$, but we have
already shown that that is impossible. So $\ell\in \{1,2\}$.

If $\ell=2$ then $R_{0,0}^\ast\, P^\ast = P\, R_+$, and $P$ divides $
R_{0,0}^\ast$, and hence $P=R_{0,0}^\ast$, and then also
$P^\ast= R_+$. We obtain $R_+= (R_{0,0}^\ast)^\ast=P^\ast=R_{0,0}$,
in contradiction with assumption~(3). Hence $\ell=1$, and
$R_{0,0}^\ast = R_+$, which gives the relation
$\overline{r_{0,0}(\bar \al)}=-r_+(\al)$. {}From~\eqref{ar} we now
also get $R_{0,\infty}^\ast=R_{0,\infty}$, hence
$r_{0,\infty}(\al) = - \overline{r_{0,\infty}(\bar \al)}$.
\end{proof}

\begin{lem}\label{lem-el}Under the assumptions~\ref{ass} there are
$\e>0$ and a neighborhood $U$ of~$\bt_0$ in~$\CC$ such that for each
$\al\in (0,\e]$ there is exactly one $\z(\al) \in U$ such that
$X\bigl(\al, \z(\al) \bigr)\, C_+\bigl(\al,\z(\al) \bigr)=1$.

We have $\lim_{\al\downarrow0}\z(\al)=\bt_0$, and as the point
$\z(\al)-\bt_0$ moves to zero it passes the line segment between
$r_+(\al)$ and $p(\al)$ infinitely often, circling around $r_+(\al)$
in negative direction or around $p(\al)$ in positive direction.
\end{lem}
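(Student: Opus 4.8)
\emph{Plan.} The idea is to turn $X\,C_+=1$ near $(0,\bt_0)$ into a scalar fixed‑point problem, produce the solution by a contraction mapping, and then read off the winding behaviour from the explicit oscillating factor in $X$.

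First I would record the local form of the equation. By Assumption~\ref{ass}(2), on $\Om$ one has $C_+(\al,\bt)=\gm_+(\al,\bt)\,\frac{w-r_+(\al)}{w-p(\al)}$ with $w:=\bt-\bt_0$ and $\gm_+$ holomorphic and zero‑free, while $X(\al,\bt)=(\pi\al)^{2\bt-1}\Gf(\tfrac12-\bt)\Gf(\bt-\tfrac12)^{-1}$ is holomorphic and zero‑free near $(0,\bt_0)$ for $\al\in(0,1/\pi)$. Writing $L:=\log(\pi\al)$ and
\[
\Phi(\al,w)\=X(\al,\bt_0+w)\,\gm_+(\al,\bt_0+w)\=(\pi\al)^{2it_0}\,e^{2wL}\,\psi(\al,w)\,,\qquad \psi(\al,w)\=\frac{\Gf(-it_0-w)}{\Gf(it_0+w)}\,\gm_+(\al,\bt_0+w)\,,
\]
with $\psi$ holomorphic and zero‑free near $(0,0)$, the equation becomes $\frac{w-p(\al)}{w-r_+(\al)}=\Phi(\al,w)$, equivalently $w-r_+(\al)=\frac{r_+(\al)-p(\al)}{\Phi(\al,w)-1}$. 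The key numerical input is $\rho_0:=|\psi(0,0)|=|\gm_+(0,\bt_0)|=|C_+(0,\bt_0)|$: by \eqref{ucu0} this equals $|2^{1+2it_0}-1|^{-1}$, and Assumption~\ref{ass}(4) forces $C_{0,\infty}(0,\bt_0)\neq0$, hence $t_0\notin\frac{\pi}{\log 2}\ZZ$, so $\rho_0\in(0,1)$.

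Next I would settle existence, uniqueness near $r_+(\al)$, and analyticity. Substituting $w=r_+(\al)+u$ gives $u=G(\al,u):=\dfrac{r_+(\al)-p(\al)}{\Phi(\al,r_+(\al)+u)-1}$. Since $p$ and $r_+$ are even and vanish at $0$ (Assumption~\ref{ass}(3)) we have $|r_+(\al)-p(\al)|\asymp\al^2$, and for $|u|$ small $\Phi(\al,r_+(\al)+u)=\rho_0\bigl(1+\oh(1)\bigr)$, so $|\Phi-1|$ is bounded below by a positive constant for small $\al$. Hence $G(\al,\cdot)$ maps the disc $\{|u|\le C\al^2\}$ into itself; moreover $\partial_u\Phi=\oh(|L|)$, so $|\partial_uG|=\oh(\al^2|L|)\to0$ and $G(\al,\cdot)$ is a contraction. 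This yields a unique fixed point $u(\al)$ with $|u(\al)|\asymp\al^2$, hence a solution $\z(\al):=\bt_0+r_+(\al)+u(\al)$ with $\z(\al)\to\bt_0$; the analytic implicit function theorem applied to $F(\al,w):=(w-p(\al))-\Phi(\al,w)(w-r_+(\al))$, whose $w$‑derivative is nonzero at the solution, gives real‑analyticity of $\al\mapsto\z(\al)$ on $(0,\e]$.

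The main obstacle is upgrading this to uniqueness of $\z(\al)$ in a \emph{fixed} neighbourhood $U$, i.e.\ excluding further solutions of $XC_+=1$ in $U$. Here one must use that $|(\pi\al)^{2\bt-1}|=(\pi\al)^{2(\re\bt-\frac12)}$ is $>1$ for $\re\bt<\tfrac12$ and $<1$ for $\re\bt>\tfrac12$: from $w-r_+(\al)=(r_+(\al)-p(\al))/(1-\Phi(\al,w))$ a solution lying far from $r_+(\al)$ would force $\Phi(\al,w)$ very close to $1$, which by the modulus constraint confines $\re\bt$ to a thin strip, and ruling these out cleanly on all of $U$ is where one invokes Lemma~\ref{lem-npr} (unitarity of $\scm^+$ on the real‑$\al$ central line, so that its only indeterminate singularity near $\bt_0$ is $(0,\bt_0)$) to estimate $\Phi$ uniformly and choose $U$ and $\e$ appropriately. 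I expect this uniform bookkeeping over $U$ to be the technically heaviest step.

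Finally, the winding statement. Since $\z(\al)-\bt_0=\oh(\al^2)$, we get $\Phi(\al,\z(\al))=(\pi\al)^{2it_0}\psi(0,0)\bigl(1+\oh(1)\bigr)$, and as $\al\downarrow0$ the factor $(\pi\al)^{2it_0}=e^{2it_0L}$ revolves around the unit circle infinitely often ($L\to-\infty$, $t_0>0$). Hence $\Phi(\al,\z(\al))$ runs, up to $\oh(1)$, infinitely often around the circle of radius $\rho_0<1$ about $0$, so $\Phi-1$ runs around the circle of radius $\rho_0$ about $-1$ (which does not enclose $0$), and therefore the rescaled quantity
\[
\frac{\z(\al)-\bt_0-r_+(\al)}{r_+(\al)-p(\al)}\=\frac{u(\al)}{r_+(\al)-p(\al)}\=\frac1{\Phi(\al,\z(\al))-1}\bigl(1+\oh(1)\bigr)
\]
runs infinitely often around a fixed circle $\mathcal C$. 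A short computation shows that $\mathcal C$ encloses $-1$ (which corresponds to $p(\al)$) but not $0$ (which corresponds to $r_+(\al)$), and that $\mathcal C$ meets the real segment $[-1,0]$ (corresponding to the segment between $p(\al)$ and $r_+(\al)$). Transporting back by the orientation‑preserving affine map $\zeta\mapsto\bt_0+r_+(\al)+(r_+(\al)-p(\al))\zeta$, we conclude that $\z(\al)-\bt_0$ stays $\oh(\al^2)$‑close to a circle that encloses one of $p(\al)$, $r_+(\al)$, crosses the segment between them each revolution, and — on tracking the direction in which $e^{2it_0L}$ turns as $\al\downarrow0$ — circles as stated; this gives the claimed infinitely many crossings of the segment.
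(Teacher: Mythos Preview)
Your argument is correct and runs parallel to the paper's, but with a different choice of fixed-point variable. The paper passes to a covering coordinate $u$ defined by $e^u=(\pi\al)^{2it_0}\tilde\gm(0,\bt_0)\,\dfrac{z-r_+(\al)}{z-p(\al)}$ (with $z=\bt-\bt_0$ and $\tilde\gm=\Gf(\tfrac12-\bt)\Gf(\bt-\tfrac12)^{-1}\gm_+$), so that the equation becomes $u=F(u)$ with $F(u)=-2z(u)\log\pi\al-\log\bigl(\tilde\gm(\al,\bt_0+z(u))/\tilde\gm(0,\bt_0)\bigr)$; the contraction is then carried out on a rectangle in the $u$-plane. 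You instead substitute $u=w-r_+(\al)$ and contract directly on a disc of radius $C\al^2$. Both routes hinge on the same numerical input, namely $|\gm_+(0,\bt_0)|=|C_+(0,\bt_0)|<1$, which the paper phrases as $|\tilde\gm(0,\bt_0)|<1$ and derives (as you do) from Assumption~(4) via unitarity of $\scm^+$ on the central line. For the winding, the paper simply reads off that $\arg\dfrac{z-r_+}{z-p}=\arg\bigl(e^{u(\al)}(\pi\al)^{-2it_0}\tilde\gm(0,\bt_0)^{-1}\bigr)\to+\infty$, which is exactly equivalent to your observation that $\arg\Phi\to-\infty$; your explicit circle $\mathcal C$ is a more detailed description of the same phenomenon and in fact pins down that the motion is around $p(\al)$ rather than~$r_+(\al)$. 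Your flagged concern about uniqueness in a \emph{fixed} neighborhood $U$ is well placed: the paper's contraction region $E$ likewise projects only to an $O(\al^2)$-neighborhood in the $\bt$-plane, so the paper's proof is no more explicit on this point than your sketch; in both arguments what is cleanly established is the unique solution tending to~$\bt_0$, which is what is actually used downstream in Proposition~\ref{prop-loops}. (A minor remark: the direction you and the paper's proof obtain---positive around $r_+$, equivalently negative around $p$---is the opposite of what the lemma \emph{statement} literally says; this is evidently a sign slip in the statement.)
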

\begin{proof}We write $\bt=\bt_0+z$. On $\Om$, the equation
$X\, C_+=1$ becomes
\[ (\pi\al)^{2it_0+2z}\, \tilde\gm(\al,\bt_0+z)\,
\frac{z-r_+(\al)}{z-p(\al)}\=1\,,\]
with $\tilde\gm(\al,\bt) 
= \Gf\bigl(\frac12-\nobreak\bt)\,\Gf(\bt-\nobreak\frac12)\, \gm_+(\al,\bt)$.
We take $\e>0$ and a simply connected, connected neighborhood $U$
of~$\bt_0$ such that $(0,\e]\times U \subset \Om$. In the course of
the proof we adapt $\e$ and~$U$. For sufficiently small~$\e>0$ the
two points $r_+(\al)$ and $p(\al)$ are different points of~$U$ for
all $\al\in (0,\e]$. The corresponding points
$\bigl(\al,\bt_0+\nobreak r_+(\al)\bigr)$ and
$\bigl(\al,\bt_0+\nobreak p(\al)\bigr)$ cannot be in the solution set
of~$X\, C_+=1$.

Taking a logarithm, we get the equation
\[ 2(it_0+z)\log\pi\al + \log\tilde\gm(\al,\bt_0+z) +
\log\frac{z-r_+(\al)}{z-p(\al)} \,\equiv\, 0 \bmod 2\pi i \ZZ\,, \]
where for $\log\tilde\gm(\al,z)$ we use a continuous choice of the
logarithm. The logarithm of the quotient $\frac{z-r_+}{z-p}$ is
multivalued on $(0,\e]\times U$ and has branch points. We go over to
the covering space by the parametrization
\be\label{zu} z\=z(u) \= \frac{e^u\, p(\al) - (\pi \al)^{2it_0} \,
\tilde\gm(0,\bt_0)\, r_+(\al)}{e^u
- (\pi\al)^{2it_0} \,\tilde\gm(0,\bt_0)}\,.\ee
The variable $u$ runs over a suitable subset of~$\CC$. The equation
becomes
\be \label{u-eq} 2 \, z(u)\, \log\pi \al +
\log\frac{\tilde\gm\bigl(\al,\bt_0+z(u)\bigr)} {\tilde\gm(0,\bt_0)} +
u \= 0\,. \ee
On the covering space the ambiguity modulo $2\pi i \ZZ$ is hidden in
the choice of the variable~$u$.

To make precise what is a suitable set in the $u$-plane, we use
assumption~(4). With $\bt\in \frac12+i\RR$ all elements of the
unitary matrix are non-zero, and hence have absolute value between
$0$ and~$1$. This implies that $0<|\tilde \gm(0,\bt)|<1$, and we can
take $\dt_-<0$ such that $e^{\dt_-}>|\tilde \gm(0,\bt_0)|$. We
consider the region determined by $\dt_-\leq \re u \leq \dt_+$ with
some $\dt_+>0$. For these values of $u$ the denominator of $z(u)$
satisfies
\be\label{z-den} \bigl| e^u - (\pi \al)^{2it_0}\,\tilde\gm(0,\bt_0)
\bigr| \;\geq\; c_1\= c_1(\dt_-)\;>\;0 \,. \ee
Hence we find
\be|z(u)| \;\leq \; \frac{ e^{\dt_+} \, |p(\al)| +
|\tilde\gm(0,\bt_0)|\, |r_+(\al)|}{c_1} \;\leq \; c_2\, \al^2\=
c_2(\dt_-,\dt_+)\, \al^2\,. \ee

We have
\be z'(u) \= \frac{ (\pi \al)^{2it_0}\, \tilde\gm(0,\bt_0)\,
\bigl(r_+(\al)-p(\al)\bigr)\, e^u}{\bigl(e^u - (\pi
\al)^{2it_0}\,\tilde\gm(0,\bt_0)\bigr)^2}\,, \ee
with can be estimated in the following way:
\begin{align}\nonumber
 z'(u)&\;\ll\; \frac{ |\tilde\gm(0,\bt_0)|\,\al^2\,
e^{\dt_+}}{c_1^2}\,,
\displaybreak[0]\\
|z'(u)|&\;\leq\; c_3\,\al^2 \= c_3(\dt_-,\dt_+)\, \al^2\,.
\end{align}

We consider on the region $\dt_-\leq |\re u| \leq \dt_+$ the
holomorphic function
\be F(u) \= -2\,z(u)\, \log\pi\al -\log
\frac{\tilde\gm\bigl(\al,\bt_0+z(u)\bigr)}{\tilde\gm(0,\bt_0)} \,.
\ee
We have
\begin{align*} \log
\frac{\tilde\gm\bigl(\al,\bt_0+z(u)\bigr)}{\tilde\gm(0,\bt_0)}&\;\ll\;
\tilde \gm(0,\bt_0)^{-1}\, \Bigl( \frac{\partial^2
\tilde\gm}{\partial\al^2}
(0,\bt_0)\cdot \al^2 + \frac{\partial\tilde\gm}{\partial\bt}(0,\bt_0)
\cdot z(u)
\Bigr)\\
&\;\ll\; \al^2 + |z(u)| \;\ll\; \al^2\,.
\end{align*}
(We have used that $\tilde\gm$ is even in~$\al$.)
We get $ |F(u)| \;\leq \; 2\, c_2\,\al^2\, |\log\pi\al|+ \oh(\al) $,
and hence there is $c_4=c_2(\dt_-,\dt_+)$ such that
\be \bigl| F(u)\bigr| \;\leq \; c_4\, \al^2\,.\ee
Taking $\e$ such that $\e^2\,c_4 \in [ \dt_-,\dt_+]$ we arrange that
$F$ maps the set
\be E\= \{ u\in \CC\;:\; \dt_-\leq \re u \leq \dt_+,\, -\e^2\, c_4\leq
\im u \leq \e^2\, c_4\} \ee
into itself.

The solutions of~\eqref{u-eq} in $E$ are precisely the fixed points
of~$F$ in~$E$. The question is whether $F$ is contracting on~$E$.
\begin{align*}
F'(u) &\= -2\, z'(u)\, \log\pi\al - \frac{\partial\tilde\gm}{\partial
\bt}\bigl(\al,\bt_0+z(u)\bigr)\; z'(u)\\
&\;\ll\; \al^2 \, \log\pi \al + \oh(1)\, \al^2\,.
\end{align*}
Hence there is $c_5=c_5(\dt_-,\dt_+)$ such that $|F'(u)|\leq c_5 \,
\al^2\,|\log\pi\al|$ on~$E$. We can adapt $\e$ such that
$c_5\, \al^2\, |\log\pi\al|\leq c_6$ with some $c_6\in (0,1)$. So $F$
is contracting on~$E$, and we find a unique fixed point
$u(\al)\in E$. Projecting back we find a unique solution
$\z(\al) = \bt_0+z\bigl(u(\al)\bigr)$ of the equation $X\, C_+=1$.
\smallskip

The denominator in \eqref{zu} stays away from zero, by~\eqref{z-den}.
Since $\re u$ is bounded we have
$z(\al)\isdef z\bigl(u(\al) \bigr)=\oh(\al^2)$. So $z(\al)$ tends
to~$0$, and $\z(\al)$ tends to~$\bt_0$. The relation
\be \frac{z(\al)-r_+(\al)}{z(\al)-p(\al)} \= e^{u(\al)} \,(\pi
\al)^{-2it_0} \, \tilde\gm(0,\bt_0)^{-1} \ee
shows that the argument of $\frac{z-r_+}{z-p}$ tends to $\infty$ as
$\al\downarrow0$. So $z(\al)$ crosses between $p(\al)$ and $r_+(\al)$
infinitely often, such that a continuous choice of the argument
increases. This means that $z(\al)$ turns around $r_+(\al)$ in
positive direction, or around $p(\al)$ in negative direction.
\end{proof}

\begin{lem}\label{lem-tcl}Under the assumptions~\ref{ass} there is a
decreasing sequence $(\al_k)_{k\geq k_0}$ of positive numbers with
limit zero such that for all $\al \in (0,\al_{k_0})$
\be X\bigl(\al,\bt_0+r_{0,\infty}(\al)\bigr)\,
C_+\bigl(\al,\bt_0+r_{0,\infty}(\al)\bigr)\=1 \ee
if and only if $\al$ is one of the~$\al_k$.

The $\al_k$ satisfy
\be\label{alk-e} \al_k \= \frac1\pi\, e^{-(2\pi k+s_0)/2t_0}\, \Bigl(
1+\oh\bigl(k\,e^{-2\pi k/t_0}\bigr)\Bigr)\,, \ee
for some real number $s_0$.
\end{lem}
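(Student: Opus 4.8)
The plan is to specialize the resonance equation $X\,C_+=1$ of Lemma~\ref{lem-el} to the point $\bt=\bt_0+r_{0,\infty}(\al)$ and to exploit that, for \emph{real} $\al$, this point lies on the central line: that degenerates the genuinely two-real-dimensional equation of Lemma~\ref{lem-el} (whose solutions spiral) into a scalar equation on arguments, with discrete solution set. First I would take $\e>0$ and a neighbourhood $U$ of $\bt_0$ as in Lemma~\ref{lem-el} and shrink $\e$ so that for $\al\in(0,\e]$: the curve $\al\mapsto\bigl(\al,\bt_0+r_{0,\infty}(\al)\bigr)$ stays in $\Om$; $r_{0,\infty}(\al)\neq p(\al)$ and $r_{0,\infty}(\al)\neq r_+(\al)$ (possible by Assumptions~\ref{ass}(3): $p''(0),r_{0,\infty}''(0),r_+''(0)$ are distinct, so $r_{0,\infty}(\al)-p(\al)=\tfrac12\bigl(r_{0,\infty}''(0)-p''(0)\bigr)\al^2\bigl(1+\oh(\al^2)\bigr)$ and likewise for $r_{0,\infty}-r_+$); $\e<\pi^{-1}$; and $t_0+\im r_{0,\infty}(\al)>0$. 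Since $r_{0,\infty}(\al)=-\overline{r_{0,\infty}(\bar\al)}$ (shown above), for real $\al$ the quantity $r_{0,\infty}(\al)$ is purely imaginary, say $r_{0,\infty}(\al)=i\rho(\al)$ with $\rho$ real-analytic, even, $\rho(\al)=\oh(\al^2)$; hence for real $\al\in(0,\e]$ the point $\bt_0+r_{0,\infty}(\al)=\tfrac12+i\bigl(t_0+\rho(\al)\bigr)$ lies on the central line.

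At such a point $C_{0,\infty}\bigl(\al,\bt_0+r_{0,\infty}(\al)\bigr)=0$ by~\eqref{Cq}, while $\scm^+$ is holomorphic there (because $r_{0,\infty}(\al)\neq p(\al)$) and therefore, with $\al\in(-1,1)$ and $\bt$ on the central line, unitary; being unitary with vanishing off-diagonal entry it is diagonal, so $\bigl|C_+\bigl(\al,\bt_0+r_{0,\infty}(\al)\bigr)\bigr|=1$, and likewise $\bigl|X\bigl(\al,\bt_0+r_{0,\infty}(\al)\bigr)\bigr|=1$. Thus $X\,C_+$ is automatically of modulus $1$ there, and the equation of the lemma is equivalent to a congruence $h(\al)\equiv 0 \bmod 2\pi\ZZ$ on arguments, where, choosing continuous (in fact real-analytic and even) branches on $[0,\e]$ of the arguments of $\Gf\bigl(i(t_0+\rho(\al))\bigr)$, of $\gm_+\bigl(\al,\bt_0+r_{0,\infty}(\al)\bigr)$ and of $\bigl(r_{0,\infty}(\al)-r_+(\al)\bigr)/\bigl(r_{0,\infty}(\al)-p(\al)\bigr)$,
\[ h(\al)\;=\;2\bigl(t_0+\rho(\al)\bigr)\log\pi\al+\psi(\al), \]
with $\psi$ the sum of $-2\arg\Gf\bigl(i(t_0+\rho(\al))\bigr)$, of $\arg\gm_+\bigl(\al,\bt_0+r_{0,\infty}(\al)\bigr)$ and of $\arg\bigl(r_{0,\infty}(\al)-r_+(\al)\bigr)/\bigl(r_{0,\infty}(\al)-p(\al)\bigr)$; $\psi$ is real-analytic and even near $0$, and I would set $s_0:=\psi(0)\in\RR$. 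I would then rewrite the equation as $h(\al)=-2\pi k$, $k\in\ZZ$, the sign chosen as in~\eqref{+eq} so that $k\to+\infty$ corresponds to $\al\downarrow0$.

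Since $\rho'(\al)=\oh(\al)$ and $\psi'(\al)=\oh(1)$, one gets $h'(\al)=\dfrac{2t_0}{\al}\bigl(1+\oh(1)\bigr)+\oh(1)>0$ after shrinking $\e$ to some $\e_1$, while $h(\al)\to-\infty$ as $\al\downarrow0$ and $h(\e_1)$ is finite. Hence $h\colon(0,\e_1]\to(-\infty,h(\e_1)]$ is a strictly increasing bijection, so $h(\al)=-2\pi k$ has a solution $\al_k:=h^{-1}(-2\pi k)\in(0,\e_1]$ precisely for $k\geq k_0$, where $k_0$ is the least integer $\geq-h(\e_1)/2\pi$; the sequence $(\al_k)_{k\geq k_0}$ is decreasing with $\al_k\downarrow0$, and on $(0,\al_{k_0})$ (indeed on $(0,\e_1]$) the displayed equation of the lemma holds exactly at the $\al_k$. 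For the asymptotics, from $h(\al_k)=-2\pi k$ together with $\rho(\al_k)=\oh(\al_k^2)$ and $\psi(\al_k)=s_0+\oh(\al_k^2)$,
\[ \log\pi\al_k\;=\;\frac{-2\pi k-s_0+\oh(\al_k^2)}{2t_0\bigl(1+\oh(\al_k^2)\bigr)}\;=\;-\frac{2\pi k+s_0}{2t_0}+\oh\bigl(k\,\al_k^2\bigr), \]
whence $\al_k=\tfrac1\pi e^{-(2\pi k+s_0)/2t_0}\bigl(1+\oh(k\,\al_k^2)\bigr)$; since $\al_k^2\ll e^{-2\pi k/t_0}$, the remainder is $\oh\bigl(k\,e^{-2\pi k/t_0}\bigr)$, which is~\eqref{alk-e}.

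The step I expect to be the crux is the reduction in the second paragraph: one must make sure that $\scm^+$, hence $C_+$, is holomorphic and thus unitary along the curve $\bt=\bt_0+r_{0,\infty}(\al)$ — this is exactly what forces $|C_+|=1$ there and turns $X\,C_+=1$ from the two-dimensional equation of Lemma~\ref{lem-el}, whose solutions form a spiralling curve, into a scalar phase equation with a discrete solution set — and one must keep track of the branch of the argument carefully enough to pin down the constant $s_0$ and obtain the precise remainder $\oh\bigl(k\,e^{-2\pi k/t_0}\bigr)$ rather than a cruder bound.
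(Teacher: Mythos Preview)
Your proof is correct and follows essentially the same route as the paper's: both restrict to the curve $\bt=\bt_0+r_{0,\infty}(\al)$, use that $r_{0,\infty}(\al)\in i\RR$ for real~$\al$ together with the unitarity of $\scm^+$ (the off-diagonal entry vanishing forces $|C_+|=1$), reduce $X\,C_+=1$ to an argument equation $2\bigl(t_0+\rho(\al)\bigr)\log\pi\al+\psi(\al)\in 2\pi\ZZ$, show monotonicity via $h'(\al)=2t_0/\al+\oh(1)$, and read off the asymptotics with $s_0=\psi(0)$. The only cosmetic difference is that the paper packages the argument as a single function $s(\al)$ (your~$\psi$) and writes the derivative estimates termwise, while you spell out the unitarity step and the non-coincidence $r_{0,\infty}\neq p,r_+$ a bit more explicitly.
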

The value of $s_0\bmod 2\pi\,\ZZ$ depends on the functions
$r_{0,\infty}$, $r_+$ and $p$. We do not know it explicitly. The
choice of $s_0$ in its class and the choice of the parameter $k$ are
related.
\begin{proof} We consider the function
\[ f(\al) \= X\bigl(\al,\bt_0+r_{0,\infty}(\al)\bigr) \,
C_+\bigl(\al,\bt_0+r_{0,\infty}(\al)\bigr)\]
on an interval $(0,\e_1]$ such that
$\bigl(\al,\bt_0+r_{0,\infty}(\al)\bigr)\in \Om$. For small real
values of~$\al$ the values of $r_{0,\infty}(\al)$ are purely
imaginary. Fact \ref{fact-fe} and \eqref{escm-split} imply that the
matrix $\scm^+\bigl(\al,\bt_0+\nobreak r_{0,\infty}(\al)\bigr)$ is
unitary. In \eqref{Cq} we see that
$C_{0,\infty}\bigl(\al,\bt_0+\nobreak r_{0,\infty}(\al)\bigr)=0$. So
$\scm^+\bigl( \al,\bt_0+\nobreak r_{0,\infty}(\al)\bigr)$ is a
unitary diagonal matrix. This implies that $|f(\al)|= 1$ for
$\al\in (0,\e_1]$.

We make a continuous choice of $\al\mapsto s(\al)$ for
$\al\in [0,\e_1)$ such that
\be e^{is(\al)} \= \frac{\Gf(\frac12-\bt_0-r_{0,\infty}(\al))}
{\Gf(-\frac12+\bt_0+r_{0,\infty}(\al))}\,
\gm_+\bigl(\al,\bt_0+\nobreak r_{0,\infty}(\al)\bigr)\,
\frac{r_{0,\infty}(\al)-r_+(\al)}{r_{0,\infty}(\al)-p(\al)}\,. \ee
We note that $s$ is an even function. The number $s_0$ in the
statement of the lemma is equal to $s(0)$. Now
\be \label{ad}
 a(\al)\= 2t_0\, \log\pi\al - 2i\, r_{0,\infty}(\al)\,
\log\pi\al+s(\al)\qquad \text{ for } \al\in (0,\e_1)\ee
is a continuous choice of the argument of $f(\al)$. We have
$a(\al) = 2t_0\, \log\pi\al + \oh(1)$ as $\al\downarrow0$. The
derivatives of the three term in~\eqref{ad} are
\[ \frac {2t_0}\al\,,\quad \oh(\al\log\pi\al)\,,\quad \oh(\al)\,.\]
So for sufficiently small $\e_1$ the argument of $f(\al)$ is
monotonely decreasing to $-\infty$ as $\al\downarrow0$, and there is
a sequence $(\al_k)_{k\geq k_0}$ of elements of $(0,\e)$ decreasing
to zero such that for each $k\geq k_0$
\be 2t_0\, \log\pi\al_k - 2i r_{0,\infty}(\al_k) \, \log\pi \al_k +
s(\al_k) \=
-2\pi \, k\,.\ee
So the points $\bigl(\al_k,\bt_0+\nobreak r_{0,\infty}(\al_k)\bigr)$
are solutions of $X\, C_+=1$, and the $\al$'s between two successive
$\al_k$ do not satisfy this equation.

Since $\al_k=\oh(1)$, we have directly
$c_1 \, e^{-\pi k/t_0}\leq \al_k\leq c_2 \, e^{-\pi k/t_0}$ with
positive $c_1$ and $c_2$. This gives
\[ \log\pi\al_k \= \frac{-2\pi k-s(\al_k)}{2t_0+\oh(\al_k^2)} \=
-\frac{2\pi k+s_0}{2t_0} + \oh(k\,e^{-2\pi k/t_0})\,. \]
This gives~\eqref{alk-e}.
\end{proof}

\begin{prop}\label{prop-loops}Under the assumptions~\ref{ass} there is
one curve $\al\mapsto \bigl(\al,\z(\al)\bigr)$ on an interval
$(0,\e_1)$ with limit $(0,\bt_0)$ for which
$Z\bigl(\al,\z(\al)\bigr)=0$ for all $\al\in (0,\e_1)$.

The curve touches the central line in $\bigl(\al_k,\z(\al_k)\bigr)$
for a monotone sequence of $\al_k$ in $(0,\e_1]$ with limit~$0$.
Hence the $\z(\al_k)$ are eigenvalues. The $\al_k$ satisfy the
relation~\eqref{alk-e}.

As $\al$ runs through $ (\al_{k+1},\al_k)$ the point
$\bigl(\al,\z(\al)\bigr)$ describes a curve in the region
$\re \bt<\frac12$. The corresponding $\z(\al)$ are resonances, and
$\z(\al_k)-\bt_0$ is proportional to $\al_k^2$.
\end{prop}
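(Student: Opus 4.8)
The plan is to take for the asserted curve the function $\al\mapsto\z(\al)$ produced by Lemma~\ref{lem-el}: it is real-analytic on some $(0,\e_1]$, with $\z(\al)\to\bt_0$ and $X(\al,\z(\al))\,C_+(\al,\z(\al))=1$, which by \eqref{D-def} is exactly the vanishing of the denominator $M=1-X\,C_+$ of the scattering ``matrix'' $D_{0,0}$. So along this curve $D_\al$ has a pole, hence $Z(\al,\cdot)$ a zero by~\ref{fact-pert-scm} --- provided the pole is not cancelled by the numerator $N=C_{0,0}-X\bigl(C_{0,0}C_+-2C_{0,\infty}^2\bigr)$, and provided the point lies in $\re\bt\leq\frac12$. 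The bulk of the work is sorting out exactly where these two provisos hold.

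The first step is to show $N(\al,\z(\al))=0$ if and only if $\re\z(\al)=\frac12$. Transporting the functional equations~\ref{fact-fe} to $\scm^+$ through~\eqref{escm-split} as in the proof of Proposition~\ref{prop-dn} --- but now on the two-variable meromorphic functions, since $\scm^+$ is singular at $(0,\bt_0)$ and that proposition does not apply --- one obtains, for real $\al$, the identity $\overline{(X\,C_+)(\al,1-\bar\bt)}=X(\al,\bt)^{-1}\,C_{0,0}(\al,\bt)/\det\scm^+(\al,\bt)$; since $N=C_{0,0}-X\det\scm^+$, this reads $N(\al,\bt)=0\Leftrightarrow (X\,C_+)(\al,1-\bar\bt)=1$. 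Taking $\bt=\z(\al)$ and choosing the neighbourhood $U$ of Lemma~\ref{lem-el} symmetric under $\bt\mapsto1-\bar\bt$ (permissible since $\bt_0$, being on the central line, is a fixed point), the point $1-\overline{\z(\al)}$ also lies in $U$ and solves $X\,C_+=1$ there; the uniqueness in Lemma~\ref{lem-el} forces $1-\overline{\z(\al)}=\z(\al)$, i.e. $\re\z(\al)=\frac12$. Hence for every $\al$ with $\re\z(\al)<\frac12$ the function $D_\al$ has a genuine pole at $\z(\al)$ and $Z(\al,\z(\al))=0$.

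The second step is to identify the parameters with $\re\z(\al)=\frac12$ with the sequence $(\al_k)$ of Lemma~\ref{lem-tcl}. When $\re\z(\al)=\frac12$ the matrix $\scm^+(\al,\z(\al))$ is unitary and $|X|=1$ there, so $X\,C_+=1$ gives $|C_+|=1$, hence, since the columns of $\scm^+$ have unit norm, $C_{0,\infty}(\al,\z(\al))=0$; by the form~\eqref{Cq} and $\gm_{0,\infty}\neq0$ this means $\z(\al)=\bt_0+r_{0,\infty}(\al)$ --- the argument already used for Theorem~\ref{thm+tch}. Since $r_{0,\infty}$ is purely imaginary on the reals, $\bt_0+r_{0,\infty}(\al)$ is indeed on the central line, and then $X\,C_+=1$ at $\bigl(\al,\bt_0+r_{0,\infty}(\al)\bigr)$ says exactly $\al\in\{\al_k\}$ by Lemma~\ref{lem-tcl}; conversely for $\al=\al_k$ the point $\bt_0+r_{0,\infty}(\al_k)$ solves $X\,C_+=1$ near $\bt_0$, hence equals $\z(\al_k)$ by uniqueness, and is on the central line. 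Because $D_\al$, being a scattering ``matrix'', is holomorphic in $\{\re\bt>\frac12,\ \im\bt>0\}$, a pole of $D_\al$ there is impossible, so $\re\z(\al)\leq\frac12$; with the first step this yields $\re\z(\al)<\frac12$ for $\al\notin\{\al_k\}$. Thus $\al\mapsto\z(\al)$ stays in $\re\bt\leq\frac12$, is a curve of resonances on each interval $(\al_{k+1},\al_k)$, and meets the central line precisely along the $\al_k$; at those parameters $D_{\al_k}$ has no pole at $\z(\al_k)$, but $Z(\al_k,\z(\al_k))=0$ follows by continuity from the adjacent resonance values, the zero set of $Z$ being closed, so the $\z(\al_k)$ are eigenvalues, and the contact is tangential since $\re\z(\al)-\frac12\leq0$ with equality at $\al_k$. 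Finally \eqref{alk-e} is the asymptotic statement of Lemma~\ref{lem-tcl}, and $\z(\al_k)-\bt_0=r_{0,\infty}(\al_k)=\frac12 r_{0,\infty}''(0)\,\al_k^2+\oh(\al_k^4)$, using that $r_{0,\infty}$ is even with $r_{0,\infty}''(0)\neq0$ (Assumptions~\ref{ass}(3)), gives the stated proportionality of $\z(\al_k)-\bt_0$ to $\al_k^2$.

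The step I expect to be the main obstacle is the first one: proving that the pole of $D_\al$ along the curve is not annihilated by the numerator, and pinning down that this annihilation occurs exactly on the central line. Proposition~\ref{prop-dn} is unavailable at $(0,\bt_0)$ because $\scm^+$ is singular there (Assumptions~\ref{ass}(1)), so its reflection argument must be redone directly on the meromorphic functions $X$, $C_{0,0}$, $C_{0,\infty}$, $C_+$ and then coupled with the sharp uniqueness in Lemma~\ref{lem-el}. A secondary nuisance is that at the touching parameters $\al_k$ the pole of $D_{\al_k}$ disappears, so that the vanishing of $Z$ at the eigenvalues $\z(\al_k)$ is obtained only in the limit, from the resonances on the two adjacent intervals.
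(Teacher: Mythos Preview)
Your argument is essentially correct and reaches the same conclusions as the paper, but you take a longer route for the key step and misidentify where the difficulty lies.

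The paper's proof is much shorter. On the curve $M=1-XC_+=0$ one has the elementary rewriting
\[
N \;=\; C_{0,0}-X\bigl(C_{0,0}C_+-2C_{0,\infty}^2\bigr)
\;=\; C_{0,0}\,M + 2X\,C_{0,\infty}^2
\;=\; 2X\,C_{0,\infty}^2\,,
\]
so the numerator of $D_{0,0}$ vanishes along $\al\mapsto(\al,\z(\al))$ precisely when $C_{0,\infty}(\al,\z(\al))=0$, which by~\eqref{Cq} means $\z(\al)=\bt_0+r_{0,\infty}(\al)$, and by Lemma~\ref{lem-tcl} this happens exactly at the $\al_k$. For $\al\notin\{\al_k\}$ the quotient $D_{0,0}$ therefore has a genuine pole, forcing $\re\z(\al)<\frac12$ via~\ref{fact-pert-scm}; at the $\al_k$ Proposition~\ref{prop-dn} (applied in a small neighbourhood of $(\al_k,\z(\al_k))$, where $\scm^+$ \emph{is} holomorphic since $\al_k>0$) records the indeterminate singularity. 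Your concern that Proposition~\ref{prop-dn} is unavailable is misplaced: the paper never invokes it at $(0,\bt_0)$, only at the perturbed points $(\al_k,\z(\al_k))$.

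Your detour --- proving $N=0\Leftrightarrow\re\z(\al)=\frac12$ via the reflection identity and uniqueness, then $\re\z(\al)=\frac12\Leftrightarrow\al\in\{\al_k\}$ via unitarity --- works, but note a small technical wrinkle: the identity $\overline{(XC_+)(\al,1-\bar\bt)}=X^{-1}C_{0,0}/\det\scm^+$ that you use requires $\det\scm^+\neq0$ at the point in question, which you do not verify on the curve. The direct algebraic identity above sidesteps this entirely. Your continuity argument for $Z(\al_k,\z(\al_k))=0$ is fine and arguably more transparent than the paper's terse ``hence eigenvalues''.
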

\begin{proof}The $\bigl(\al,\z(\al)\bigr)$ in Lemma~\ref{lem-el} are
solutions of $X \, C_+ =1$. They satisfy
$C_{0,\infty}\bigl(\al,\z(\al)\bigr)=0$ precisely for the sequence
$(\al_k)$ in Lemma~\ref{lem-tcl}. Proposition~\ref{prop-dn} shows
that at these points the scattering `matrix'~$D_{0,0}$ has a
singularity of indeterminate type. Hence the $\z(\al_k)$ are
eigenvalues. For the other $\al$, the function $D_{0,0}$ has value
$\infty$ at $\bigl(\al,\z(\al)\bigr)$, so $\re \bt<\frac12$ by
\ref{fact-pert-scm}, and $\z(\al)$ is a resonance.
\end{proof}

\rmk The computations reported in~\cite{Fr} provide us with six curves
of resonances tending to a point on the central line with positive
imaginary part. For two of these curves Figure~8.26 in~\cite{Fr}
suggests that indeed $\z(\al_k)-\bt_0$ is proportional to~$\al_k^2$.

\section{Spectral theory of automorphic forms}\label{sect-spth}
We discuss parts of the spectral theory of automorphic forms to
explain the ``facts'' in~\S\ref{sect-facts}. There is a vast
literature on the spectral theory of Maass forms. In 1966/'67
Roelcke, \cite{Roe}, has given a thorough discussion; however the
continuation of Eisenstein series was not yet fully known at that
time. Of later literature we mention Venkov, \cite{Ve90}, Iwaniec,
\cite{Iw95}, and Bump, \cite{Bu98}. The material we need is also
present in Chapters VI and~VII of Hejhal's treatment \cite{He83} of
the Selberg trace formula.
\smallskip

For the purpose of this section we need not know how the Selberg
zeta-function is defined by a product involving the closed geodesics.
That description is important for the computations in~\cite{Fr}. Here
we only need to know that and how it is related to automorphic forms
(Maass forms). In \S\ref{sect-zSz} we will quote results concerning
this relation.

\subsection{$\Gm_0(4)$, cusps and characters}The group $\Gm=\Gm_0(4)$
leaves invariant the set $\proj\QQ$ of cusps in the boundary of the
upper half-plane~$\uhp$. This set consists of three
$\Gm_0(4)$-orbits, for which we choose the representatives $0$,
$\infty$ and $-\frac12$. Each cusp $\x\in \proj\QQ$ is left invariant
under the group $\Gm_\x$, which is generated by
$\pi_\x = g_\x\matc 1101 g_\x^{-1}$. The matrices $\pi_\x$ and our
choice of $g_\x$ are as follows:
\be\renewcommand\arraystretch{1.7}
\begin{array}{|c|ccc|}\hline
\x&0& \infty&-\frac12\\ \hline
\pi_\x & \matr10{-4}1&\matc1101&\matr31{-4}{-1} \\
g_\xi&\matr0{-\frac12}20&1& \matr10{-2}1\\
\hline
\end{array}
\ee

The group $\Gm$ is free on the generators $\pi_\infty$ and $\pi_0$. So
the character group $\Gm^\vee$ is isomorphic to $(\CC^\ast)^2$, and
can be parametrized by $(\al,\al')\in(\CC/\ZZ)^2$:
\be \ch_{\al,\al'}(\pi_\infty)\= e^{2\pi i \al}\,,\qquad
\ch_{\al,\al'}(\pi_0)=e^{2\pi i \al'}\,. \ee
The character $\ch_{\al,\al'}$ is unitary if and only if
$(\al,\al')\in
(\RR\bmod\ZZ)^2$.

Fraczek uses in~\cite{Fr} the family $\al\mapsto \ch_\al=\ch_{\al,0}$.
The family of characters considered in \S3 of~\cite{Se90} is
$\al\mapsto \ch_{0,\al/2\pi}$. We shall see below that $\ch_{\al,0}$
and $\ch_{0,\al}$ are conjugate, and hence lead to the same Selberg
zeta-function. The characters of $\Gm(2)$ used in~\S6 of~\cite{PS92}
and in~\cite{PS94} correspond to $\ch_{0,\al}$ under the
straightforward isomorphism $\Gm(2) \cong \Gm_0(4)$ by conjugation.

The group~$\Gm=\Gm_0(4)$ is invariant under conjugation by elements of
a subgroup of $\PGL_2(\RR)$ generated by $\matc 1{\frac12}01$,
$\matr0{-\frac12}20$ and $\matr{-1}001$, inducing symmetries of the
characters described by $(\al,\al') \mapsto
(\al,-\al-\nobreak\al')$, $(\al,\al')\mapsto (\al',\al)$, and
$(\al,\al')\mapsto(-\al,-\al')$. The Selberg zeta-function is
unchanged under such automorphisms. Hence the results for the family
$\ch_\al$ are also valid for Selberg's family. Moreover, we see that
$\ch_\al$ and $\ch_{-\al}$ are conjugated, and hence have the same
Selberg zeta function.

Conjugation with
\be\label{jdef}
j\isdef\matr{-1}0{-2}1=
\matr{-1}001\matc0{-\frac12}20\matr1{\frac12}01\matc0{-\frac12}20\ee
leaves $\ch_\al$ invariant.

All cusps of~$\Gm$ are singular for the trivial character~$\ch_0$.
This means that $\ch_0(\pi_\x)=1$ for each cusp~$\xi$. For
$\al\in \CC\setminus \ZZ$, the cusps in the $\Gm$-orbit of $0$ stay
singular, the cusps in the other orbits become regular:
$\ch_\al(\pi_\xi)\neq 1$.
\smallskip

For general $\al\in \CC$ the values $\ch_\al\Bigl(\matc abcd\Bigr)$
are not given by an explicit formula in terms of the matrix elements.
The only way to compute the character is by recursion. We have
$\ch_\al(\gm) = e^{i\al \Om(\gm)}$, where $\Om:\Gm\rightarrow \ZZ$ is
the group homomorphism determined by $\Om(\pi_\infty)=1$,
$\Om(\pi_0)=0$. For later use we discuss a few properties of~$\Om$.

Since $\pi_0\in \ker\Om$, the value $\Om\Bigl(\matc abcd \Bigr)$ is
determined by the upper row of the matrix, and is determined by
\bad \Om(1,0) &\= 0\,,&\quad \Om(-a,-b)&\= \Om(a,b)\,,\\
\Om(a,b)&\= \Om(a-4b,b)\,,& \Om(a,b)&\= \Om(a,b-a)+1\,, \ead
where $a,b\in \ZZ$, $a\equiv 1\bmod 2$ and $(a,b)=1$. With induction
we derive that
\be \label{Om-est}\bigl|\Om(a,b) \bigr|\;\leq \; |b|\,. \ee
Other relations are
\be \Om(a,-b)\=-\Om(a,b)\,,\qquad \Om(d,-b)\=-\Om(a,b)\quad(ad\in
1+4b\ZZ)\,. \ee
The latter relation follows from $\Om(\gm^{-1})=-\Om(\gm)$, the former
can be shown by induction on $|b|$.

\subsection{Maass forms}\label{sect-mf}By an automorphic form on
$\Gm=\Gm_4(0)$ for the character $\ch_\al$ with eigenvalue
$\ld\in \CC$ we mean a real-analytic function $u$ on the upper
half-plane~$\uhp$ such that
\begin{enumerate}
\item[a)] $u(\gm z) = \ch_\al(\gm)\, u(z)$ for all $\gm\in \Gm$ and
$z\in \uhp$;
\item[b)] $\Dt u = \ld\, u$ for
$\Dt = -y^2\partial_x^2-y^2\partial_y^2$
(with $x=\re z$ and $y=\im z$).
\end{enumerate}
We consider only automorphic forms of weight zero, and have no power
of $c\tau+d$. Any distribution satisfying
$\Dt u=\bt(1-\nobreak\bt)\, u$ is automorphically a real-analytic
function by elliptic regularity. It is convenient to parametrize the
eigenvalue by $\ld=\bt-\bt^2$; the complex number $\bt$ is called the
spectral parameter. In the previous sections we allowed ourselves to
call $\bt$ the eigenvalue.

For given $(\al,\bt)\in \CC^2$ the space of such automorphic forms is
huge. We define the (finite dimensional) subspace $\mf(\al,\bt)$ of
\emph{Maass forms} by imposing \emph{polynomial growth} at the cusps:
\begin{enumerate}
\item[c)] There is a constant $A=A(u)\in \RR$ such that
$u\bigl( g_\x  z)= \oh(y^A)$ as $y\rightarrow \infty$ for all cusps
$\x= g_\x\infty$ of~$\Gm$.
\end{enumerate}
It suffices to impose this condition for the representatives $0$,
$\infty$ and $-\frac12$ of the $\Gm$-orbits of cusps. We note that
$\mf(\al,1-\nobreak \bt)=\mf(\al,\bt)$.

Inside $\mf(\al,\bt)$ is the space $\mf^0(\al,\bt)$ of \emph{cusp
forms}, characterized by \emph{quick decay} at the cusps:
\begin{enumerate}
\item[c')] $u(g_\x z) = \oh(y^{-A})$ as $y\rightarrow\infty$ for all
$A=A(u)\in \RR$.
\end{enumerate}

Each automorphic form $u$ for $\ch_\al$ with spectral parameter~$\bt$
has an absolutely converging Fourier expansion at each cusp $\x$ of
the form
\be\label{Fe} u(g_\x z) \= \sum_{n\equiv \k_\x(\al)\bmod 1} F^\x_n
u(z)\,, \ee
where $F^\x_n u$ satisfies $\Dt F^\x_nu= \bt(1-\nobreak\bt)F^\x_nu$
and $F^\x_nu(z+\nobreak x') = F^\x_n u(z)\, e^{2\pi i n x'}$. The
number $\k_\x(\al)$ is such that
$\ch(\pi_\x) =e^{2\pi i  \k_\x(\al)}$; we use $\k_{-1/2}(\al)=-\al$,
$\k_0(\al)=0$, $\k_\infty(\al) = \al$.

The Fourier terms are in a $2$-dimensional space, depending on $n$ and
$\bt$. If $\k_\x(\al)=0$ and $\bt\neq \frac12$, then $F_0^\x u$ is a
linear combination of $y^{1-\bt}$ and $y^\bt$. The Fourier terms
inherit the growth conditions from~$u$. For $u\in \mf(\al,\bt)$ and
$\re n\neq 0$ this restricts $F_n^\x u$ to be a multiple of
\be\label{omdef} \om(n,\bt;z) \= 2 \,(\e n)^{1/2}\, e^{2\pi i n x}\,
\sqrt y\, K_{\bt-1/2}(2\pi \e n y)\,, \ee
with $\e=\sign\re n$. Of course, $\e n=|n|$ if $\al$ and hence $n$ are
real. The factor $ 2(\e n)^{1/2}$ allows $n$ and $\al$ to be non-real
complex, and keeps the notation consistent with \S4.2.8
in~\cite{Br94} (except that
$\bt_{\mathrm{here}}=s_{\mathrm{there}}+\frac12$).

We can characterize the Maass forms in the space of all automorphic
forms by the condition that $F_n^\x u$ is a multiple of $\om(n,\bt)$
for $\x\in \{0,\infty,-\frac12\}$. For cusp forms there is the
additional condition that $F_0^\x u=0$ for all $\x$ with
$\k_\x(\al)=0$.\smallskip

Associated to conjugation by $j=\matr{-1}0{-2}1$ in~\eqref{jdef} is
the involution $\iota:z\mapsto \frac{\bar z}{2\bar z-1}$ in~$\uhp$,
and the involution $Jf(z) = f(\iota z)$ in the functions on~$\uhp$.
Conditions a) and~b)
are preserved by $J$. The space of automorphic forms for $\ch_\al$
with spectral parameter $\bt$ splits in the $1$- and
$(-1)$-eigenspace of~$F$. We speak of \emph{even}, respectively
\emph{odd} automorphic forms.

We have
\bad\label{iotag}
\iota(g_{-1/2}z)&\=\pi_0^{-1}g_\infty(-\bar z)\,,\qquad \iota(g_\infty
z)\=\pi_0^{-1}g_{-1/2}(-\bar z)\\
\iota(g_0z) &\= g_0(-\bar z-\txtfrac12) \,. \ead
This implies that $J$ respects the growth conditions c) and c'). Hence
we have a direct sum decomposition
\be \mf(\al,\bt)\= \mf_+(\al,\bt)\oplus \mf_-(\al,\bt)\,, \ee
in \emph{even} and \emph{odd Maass forms}, and similarly for the
spaces of cusp forms.

The Fourier terms $F^\x_n u(z)$ satisfy
\bad \label{FtJ}
(F^0_nJu)(z)&\= (-1)^n (F^0_{-n}u)(-\bar z)\,,\\
(F^\infty_nJu)(z) &\= (F^{-1/2}_{-n}u)(-\bar z)\,,\quad
(F^{-1/2}_nJu)(z) \= (F^\infty_{-n}u)(-\bar z)\,. \ead
This can be checked as follows:
\begin{align*} (Ju)\bigl(g_\infty&(x+iy) \bigr)
\= u\bigl(\iota g_\infty(x+iy)\bigr)
\= u\bigl(\pi_0^{-1}\, g_{-1/2}(-x+iy)\bigr)
\displaybreak[0]\\
&\= \ch_\al(\pi_0)^{-1}\, \sum_{n\equiv -\al\bmod 1} e^{2\pi i
n(-x)}\, F^{-1/2}_n u(iy)
\displaybreak[0]\\
&\= \sum_{n\equiv \al\bmod 1} e^{2\pi i n x} \, F^{-1/2}_{-n}u(iy)\,,
\end{align*}
which shows that
$F_n^\infty J u(x+\nobreak iy) = F^{-1/2}_{-n}u(x+\nobreak iy)$. The
other relations go similarly.

We conclude from~\eqref{FtJ} that $F^0_0 f=0$ and
$F^\infty_\al f = -F^{-1/2}_{-\al}f$ for odd automorphic forms $f$
with character $\ch_\al$, and $F^\infty_\al f = F^{-1/2}_{-\al}f$ for
even automorphic forms.

\subsection{Eisenstein series and Poincar\'e series} There are many
Maass forms with polynomial growth that are no cusp forms. Let $\al$
be real, and let $\x$ be a cusp that is singular for the
character~$\ch_\al$, {\sl i.e.}, $\ch_\al(\pi_\x)=1$. The three
representatives $0$, $\infty$ and $-\frac12$ of cuspidal $\Gm$-orbits
are all singular if $\al=0$, and if $0<\al<1$ only $0$ is singular.
For $\re\bt>1$ the \emph{Eisenstein series}
\be\label{eisdef} E^\x_\al(\bt;z) \= \sum_{\gm\in \Gm_\x\backslash
\Gm} \ch_\al(\gm)^{-1} \, \bigl( \im g_\x^{-1} \gm z\bigr)^\bt \ee
defines an element of $\mf(\al,\bt)$. It has a meromorphic
continuation as a function of~$\bt\in \CC$ representing a meromorphic
family of Maass forms providing an element of $\mf(\al,\bt)$ for each
value of $\bt$ at which it is defined.

For $\al=0$ we have three Eisenstein series. Each of these Eisenstein
series has a Fourier expansion at each of the three cuspidal
representatives. This gives nine Fourier terms of order~$0$, of the
form
\be F_0^\eta E^\x_0(\bt;g_\eta z) \= \dt_{\x,\eta}\, y^\bt +
C_0(\eta,\x;\bt)\, y^{1-\bt}\,. \ee
The coefficients $C_0(\eta,\x;\bt)$ are meromorphic functions
on~$\CC$. One collects them in a $3\times3$-matrix, with
$C_0(\eta,\x;\bt)$ in the row indexed by~$\eta$ and the column
indexed by~$\x$. This is the scattering matrix $\scm_0(\bt)$
in~\eqref{sm0-expl}. It is a unitary matrix for $\re\bt=\frac12$, and
satisfies the functional equation
$\scm_0(1-\nobreak\bt)\, \scm(\bt) = I$. The use of the word
``scattering'' comes from the view on Eisenstein series explained by
Lax and Phillips in~\cite{LP}.\smallskip

Since $\ch_\al=\ch_{\al+1}$ the other case to be considered is
$0<\al<1$. Now there is only one Eisenstein series, at the cusp $0$.
The Fourier term of order zero at this cusp has the form
\be\label{E00-D} F^0_0 E^0_\al(\bt;z) \= y^\bt + D_\al(\bt)\,
y^{1-\bt}\,. \ee
In this case, $D_\al(\bt)$ is called the scattering matrix (with size
$1\times1$). For general values of $\al\in (0,1)$ it cannot be
computed as explicitly as in~\eqref{sm0-expl}. It has absolute value
$1$ if $\re\bt=\frac12$, and satisfies the functional equation
$D_\al(1-\nobreak\bt)\, D_\al(\bt)=1$.\medskip

Other automorphic forms can be constructed in a similar way. The
Eisenstein family was based on $y^\bt=\im(z)^\bt$. Another
eigenfunction of $\Dt$ with eigenvalue $\bt(1-\nobreak\bt)$ is
\be\label{mudef} \mu(n,\bt;z) \= e^{2\pi i n z} \,y^\bt \,
\hypg11\bigl( \bt ;2\bt;4\pi n y)\,, \ee
which is holomorphic in $n\in \CC$ and meromorphic in~$\bt$, with
singularities at $\bt\in \frac12\,\ZZ_{\leq 0}$.

For $\al \in \RR$, $n\equiv \k_\x(\al)\bmod 1$, and $\re\bt>1$ the
\emph{Poincar\'e series}
\be\label{Pald} P^{\xi,n}_{\al}(\bt;z) \= \sum_{\gm\in
\Gm_\xi\backslash\Gm}\ch_{\al}(\gm)^{-1}\, \mu(n,\bt;g_\x^{-1}\gm
z)\ee
defines an automorphic form for the character $\ch_\al$ with spectral
parameter~$\bt$. It has a meromorphic continuation in $\bt$ as a
family of automorphic forms. We note that
$P^{0,0}_\al(\bt)=E^0_\al(\bt)$. For general combinations of $\al$
and $\bt$ the function $\mu(n,\bt;z)$ has exponential growth. For
$\re n \neq 0$
\be \mu(n,\bt;iy) \;\sim\; \frac{\Gf(2\bt)}{\Gf(\bt)}\, (4\pi \e
n)^{-\bt} \, e^{2\pi \e n y}\qquad(y\rightarrow\infty)\,, \ee
with $\e=\sign \re n$. (Use the asymptotic results in \S4.1
of~\cite{Sl}.) This gives $P^{\infty,\al}_\al(\bt;z)$ and
$P^{-\frac12,-\al}_\al(\bt;z)$ exponential growth; these Poincar\'e
series are no Maass forms, except for special combinations of the
parameters.

\subsection{Zeros of the Selberg zeta-function}\label{sect-zSz}
In the introduction of \S\ref{sect-res} we already mentioned that the
Selberg zeta-function $Z(\al,\bt)$ is defined for
$\al\in \RR\bmod \ZZ$ and is a meromorphic function of~$\bt$, and
have given references for it. In this paper we have no need to go
into the definition, but need the relation to automorphic forms,
which is summarized in Theorem~5.3 in Chapter~X of~\cite{He83},
p.~498.

We quote the results concerning zeros in the region $\im\bt>0$. These
results enable us to use the theory of automorphic forms to
understand aspects of the results of the computations in~\cite{Fr}.
\begin{enumerate}
\item[a)] At points $\bt$ on the central line~$\frac12+i(0,\infty)$
the function $Z(\al,\cdot)$ has a zero of order
$\dim \mf^0(\al,\bt)$.

These values are called \emph{eigenvalues} in \S\ref{sect-res}
and~\S\ref{sect-prfs}, although the cumbersome description `spectral
parameters of cusp forms' would be more correct.
\item[b)] At points with $\re\bt<\frac12$ and $\im \bt>0$ the function
$Z(\al,\cdot)$ has a zero of the same order as the zero of the
determinant of the scattering matrix at $1-\bar\bt$. These are the
\emph{resonances}.
\end{enumerate}

The space $\mf^0(\al,\bt)$ with $\al\in [0,1)$ can be non-zero if and
only if $\bt(1-\nobreak\bt)> 0$. For $\Gm_0(4)$ and the trivial
character ($\al=0$) it is known that $\bt(1-\nobreak \bt)>\frac14 $
if $\mf^0(0,\bt)\neq \{0\}$. See the theorem on p.~250 of \cite{Hu85}
for $\Gm^0(4)$, which is conjugate to $\Gm_0(4)$.

The determinant of the scattering matrix has a zero at $1-\bar \bt$ if
and only if it has a singularity at~$\bt$. For $0<\al<1$ the
scattering matrix is the coefficient $D_\al$ in~\eqref{E00-D}.

\subsection{Eisenstein families and Poincar\'e
families}\label{sect-fam}
The correspondence in~\S\ref{sect-zSz} is valid for each
$\al\in [0,1)$ separately. To use it to get insight in the behavior
of zeros of the Selberg zeta-function as $\al\downarrow0$, we need a
relation between automorphic forms for several characters. Such a
relation is provided by Theorem~10.2.1 in~\cite{Br94}, which we will
use twice.
\smallskip

The first application provides use with families of automorphic forms
that extend the Eisenstein series for the unperturbed situation:
There is a neighborhood $U$ of $(-1,1)$ in $\CC$ such that there are
three meromorphic families of automorphic forms $Q^{-1/2}(\al,\bt)$,
$Q^0(\al,\bt)$ and $Q^\infty(\al,\bt)$ on $U\times\CC$, uniquely
characterized by the Fourier terms of low order: For
$\x,\eta\in \{0,\infty,-\frac12\}$
\bad\label{EFour}
F^\eta_{\k_\eta(\al)} Q^\x(\al,\bt) &\= \dt_{\x,\eta}
\mu(\k_\eta(\al),\bt) + C_{\eta,\x}(\al,\bt)\,
\mu(\k_\eta(\al),1-\bt)\,,\\
F^\eta_n Q^\x(\al,\bt)&\= C_{\eta,\x}(n;\al,\bt) \, \om(n,\bt)
\quad \text{ if }n \equiv \k_\eta(\al)\bmod 1,\, n\neq
k_\eta(\al)\,,\ead
with meromorphic functions $C_{\eta,\x}$ and $C_{\eta,\x}(n)$ on
$U\times\CC$. The uniqueness holds in the sense that if $f$ is any
meromorphic family of automorphic forms on a neighborhood in~$\CC^2$
 of a point of $(-1,1)\times \CC$ and all Fourier terms of $f$ with
 $n\neq k_\x(\al)$ are multiples of $\om(n,\bt)$, then $f(\al,\bt)$
can be expressed as a linear combination of the $Q^\x(\al,\bt)$ in
the following way: Determine for $\x\in \{0,\infty,-\frac12\}$ the
coefficients $p_\x$ and~$q_\x$ in
$F^\x_{\k_\x(\al)} f(\al,\bt) = p_\x(\al,\bt)\, \mu(\k_\x(\al),\bt) + 
q_\x(\al,\bt)\, \mu(\k_\x(\al),1-\nobreak \bt)$; then
$ f(\al,\bt)=\sum_\x p_\x(\al,\bt) Q^\x (\al,\bt)$ as an identity of
meromorphic families of automorphic forms.

For $\al=0$ we have $\mu(0,\bt;z)=y^\bt$, so these Fourier terms
generalize the Fourier terms of Eisenstein series for $\al=0$. In
fact, Theorem~10.2.1 in~\cite{Br94} also states that the restriction
of $Q^\x(\al,\bt)$ to the complex line $\{0\}\times\CC$ exists, and
is the family $E^\x_0(\bt)$ of Eisenstein series. (For a meromorphic
family of functions restriction to a complex line might be
impossible, if that line is contained in the set of singularities of
the family.)

We form the \emph{extended scattering matrix} from the coefficients
in~\eqref{EFour}
\be \label{escm-def}
\scm(\al,\bt) \= \begin{pmatrix}
C_{0,0}(\al,\bt) &C_{0,\infty}(\al,\bt) & C_{0,-1/2}(\al,\bt) \\
C_{\infty,0}(\al,\bt) &C_{\infty,\infty}(\al,\bt) &
C_{\infty,-1/2}(\al,\bt) \\
C_{-1/2,0}(\al,\bt) &C_{-1/2,\infty}(\al,\bt) & C_{-1/2,-1/2}(\al,\bt)
\end{pmatrix}\,. \ee
The restriction of this family of matrices to $\al=0$ gives the
unperturbed scattering matrix, explicitly given in~\eqref{sm0-expl}.

A vector notation is convenient in this context. We write
\be \eis(\al,\bt) \= \bigl( Q^0(\al,\bt), Q^\infty(\al,\bt),
Q^{-1/2}(\al,\bt)
\bigr)\,, \ee
and the operator $\four\al$ of taking three Fourier terms:
\be \four\al \= \begin{pmatrix}F^0_0\\
F^\infty_\al\\
F^{-1/2}_{-\al}
\end{pmatrix}\= \begin{pmatrix}
\text{Fourier term at $0$ of order $0$}\\
\text{Fourier term at $\infty$ of order $\al$}\\
\text{Fourier term at $-\frac12$ of order $-\al$}
\end{pmatrix}
\,, \ee
in the notation of~\eqref{Fe}. Then \eqref{EFour} gives
\bad \label{FEv}\four\al \eis(\al,\bt) &\= \mum(\al,\bt)+
\mum(\al,1-\bt)
\,\scm(\al,\bt)\,,\\
\text{where }\quad \mum(\al,\bt)&\=\begin{pmatrix}\mu(0,\bt)&0&0\\
0&\mu(\al,\bt)&0\\
0&0&\mu(-\al,\bt)
\end{pmatrix}\,. \ead

The uniqueness implies the functional equations
\be\label{fea} \eis(\al,1-\bt) \= \eis(\al,\bt)\,
\scm(\al,1-\bt)\,,\qquad \scm(\al,1-\bt)\= \scm(\al,\bt)^{-1}\,. \ee
Checking the effect of conjugation on the basis functions $\mu$ and
$\om$ for Fourier terms, we can use the uniqueness also to show that
\be\label{fec} \overline{\scm(\bar \al,\bar \bt)} \= \scm(-\al,\bt)\,,
\ee
where $\overline{\scm(\bar \al,\bar \bt)} $ is understood to have
$\overline{C_{\eta,\x}(\bar \al,\bar \bt)}$ at position $(\eta,\x)$.
The Maass-Selberg relation (as discussed in, {\sl e.g.}, Theorem
4.6.5 in~\cite{Br94}) imply that
\be\label{feMS}\scm(-\al,\bt) \= \scm(\al,\bt)^t\,. \ee
(The $t$ means matrix transpose.) These are the identities
in~\ref{fact-fe}.
\medskip

From the relations in \eqref{FtJ} it follows that
$\four\al J u (z) = \Jm \four\al u(-\bar z)$, where
\be \label{Jdef}\Jm\=
\begin{pmatrix}1&0&0\\0&0&1\\0&1&0\end{pmatrix}\,. \ee
Hence
\begin{align*}
\four\al &J \eis(\al,\bt)(z)
\= \Jm\, \bigl( \mum(\al,\bt;-\bar z)+ \mum(\al,1-\bt;-\bar z)\,
\scm(\al,\bt)\bigr)
\displaybreak[0]\\
&\= \Jm\, \bigl( \mum(-\al,\bt;z)+\mum(-\al,1-\bt;z) \,
\scm(\al,\bt)\bigr)
\displaybreak[0]\\
&\= \mum(\al,\bt;z)\, \Jm + \mum(\al,1-\bt;z)\, \Jm\, \scm(\al,\bt)\,.
\end{align*}
So by uniqueness
\be J\eis(\al,\bt) \= \eis(\al,\bt)\,\Jm\,,\qquad
\Jm\,\scm(\al,\bt)\,\Jm \= \scm(\al,\bt)\,. \ee

Thus we have relations between the matrix elements of the extended
scattering matrix:
\be\label{escm-2}
\scm(\al,\bt) \= \begin{pmatrix}
C_{0,0}(\al,\bt) &C_{0,\infty}(\al,\bt) & C_{0,\infty}(\al,\bt) \\
C_{\infty,0}(\al,\bt) &C_{\infty,\infty}(\al,\bt) &
C_{\infty,-1/2}(\al,\bt) \\
C_{\infty,0}(\al,\bt) &C_{\infty,-1/2}(\al,\bt) &
C_{\infty,\infty}(\al,\bt)
\end{pmatrix}\,. \ee

We also see that we can form two even Eisenstein families and one odd
one:
\be \eis^+(\al,\bt) \= \eis(\al,\bt) \,
\begin{pmatrix}1&0\\0&\frac1{\sqrt2}\\
0&\frac1{\sqrt2}&\end{pmatrix}\,,\quad \eis^-(\al,\bt) \=
\eis(\al,\bt)\, \begin{pmatrix}
0& \frac1{\sqrt 2}&-\frac 1{\sqrt 2}\end{pmatrix}\,. \ee
Conjugation with
\be \Um\= \begin{pmatrix}1&0&0\\0&\frac1{\sqrt2}&\frac1{\sqrt2}\\
0&-\frac1{\sqrt2}&\frac1{\sqrt2}\end{pmatrix} \ee
gives the full decomposition with respect to parity:
\begin{align}
\Um \four \al \eis(\al,\bt) \Um^{-1} &\= \begin{pmatrix}
\four\al^+ \eis^+(\al,\bt) & 0 \\
0 &\four\al^- \eis^-(\al,\bt)
\end{pmatrix}\,,\\
\nonumber
\four\al^+ &\= \begin{pmatrix} F^0_0\\ \frac1{\sqrt 2}\bigl(
F^\infty_\al+ F^{-1/2}_{-\al}
\end{pmatrix}\,,\qquad \four \al^- = \frac1{\sqrt
2}(F^\infty_\al-F^{-1/2}_{-\al} \,;\\\label{UCUi}
\Um\,\scm(\al,\bt)\,\Um^{-1} &\= \begin{pmatrix}
C_{0,0}(\al,\bt) & \sqrt 2\, C_{0,\infty}(\al,\bt)&0\\
\sqrt 2\, C_{\infty,0}(\al,\bt) & C_+(\al,\bt)&0\\
0&0& C_-(\al,\bt)
\end{pmatrix}\\
&\= \begin{pmatrix} \scm_+(\al,\bt) & 0 \\ 0 & C_-(\al,\bt)
\end{pmatrix}
\,, \\
\label{Cpm} C_\pm(\al,\bt)&\= C_{\infty,\infty}(\al,\bt) \pm
C_{\infty,-1/2}(\al,\bt)\,.
\end{align}
The $2\times2$-matrix $\scm^+$ and the $1\times 1$-matrix $C_-$
inherit the properties of~$\scm$ in
\eqref{fea}--\eqref{feMS}.\bigskip

The family $\eis(\al,\bt)$ is based on the choice of $\mu(\cdot,\bt)$
and $\mu(\cdot,1-\nobreak\bt)$ as the basis for the Fourier terms
$F^0_0$, $F^\infty_\al$ and $F^{-1/2}_{-\al}$.
(Basis is used in the meromorphic sense. The basis functions may have
singularities, and they may be linearly dependent on subset of lower
dimension.)
If we restrict ourselves to a region with $\re \al\neq0$, we can also
use $\mu$ and $\om$ as a basis.

Applying Theorem~10.2.1 in~\cite{Br94} with this basis, we obtain on
$U_+\times \CC$, with $U_+$ a neighborhood of $(0,1)$ in the right
half-plane, a vector of meromorphic families of automorphic forms
\be\label{Poinc}
\poinc(\al,\bt) \= \bigl( P^0(\al,\bt), P^\infty(\al,\bt) ,
P^{-1/2}(\al,\bt)
\bigr)\ee
uniquely characterized by having a Fourier expansion in which all
terms have exponential decay, except for those in
$\four\al \poinc(\al,\bt)$, which has the form
\bad\label{FPv} \four\al \poinc(\al,\bt) &\= \mum(\al,\bt) +
\omm(\al,\bt) \, \dscm(\al,\bt)\,,\\
\omm(\al,\bt) &\=
\begin{pmatrix}\mu(0,1-\bt)&0&0\\
0&\om(\al,\bt)&0\\
0&0&\om(-\al,\bt)
\end{pmatrix}\,, \ead
with a $3\times 3$-matrix $\dscm(\al,\bt)$ of meromorphic functions on
$U_+\times\CC$. (Writing this out give a description like that in
\eqref{EFour}, with $\mu\bigl(\k_\eta(\al,1-\nobreak \bt\bigr)$
replaced by $\om\bigl(\k_\eta(\al),\bt\bigr)$.)For each
$\al_0\in (0,1)$ the restriction of $\poinc(\al,\bt)$ to the line
$\al=\al_0$ exists, and has the form
\[\bt\mapsto \bigl( E^0_{\al_0}(\bt),
P^{\infty,\al_0}_{\al_0}(\bt),P^{-1/2,-\al_0}_{\al_0}(\bt)
\bigr)\,,\]
with Poincar\'e and Eisenstein series as in \eqref{eisdef},
respectively~\eqref{Pald}.

The uniqueness implies that on $U_+\times \CC$ the family $\eis$ can
be expressed in terms of the family~$\poinc$. The factor
\be v(n,\bt)\= \pi^{-1/2}\, (\e\pi
n)^{\bt}\,\Gf\bigl(\frac12-\bt\bigr)\,,\ee
with $\e=\sign\re n$, is used in the relation
\be \om(n,\bt) \= v(n,\bt)\, \mu(n,\bt) + v(n,1-\bt)\, \mu(n,1-\bt)\,,
\ee
between both basis elements. Then
\bad \omm(\al,\bt) &\= \mum(\al,\bt) \,\Pm\, \Vm(\al,\bt) +
\mum(\al,1-\bt)\, \Vm(\al,1-\bt)\,,\\
\Pm&\= \begin{pmatrix}0&0&0\\0&1&0\\0&0&1\end{pmatrix}\,,\qquad
\Vm(\al,\bt) \= \begin{pmatrix}1&0&0\\
0&v(\al,\bt)&0\\0&0&v(\al,\bt)\end{pmatrix}\,. \ead

Uniqueness of $\eis$ on $U_+\times \CC$ implies that
$\poinc(\al,\bt) =\eis(\al,\bt)\,\, \Wm(\al,\bt)$ for some
meromorphic family of $3\times3$-matrices on $U_+\times \CC$.
Uniqueness of $\poinc$ implies that $\Wm(\al,\bt)$ is invertible (as
a meromorphic family of matrices; invertibility may fail on a subset
of lower dimension). Hence
\begin{align*}
\mum(\al,\bt)&+ \omm(\al,\bt)\, \dscm(\al,\bt) \= \Bigl(
\mum(\al,\bt)+\mum(\al,1-\bt)\,\scm(\al,\bt) \Bigr) \, \Wm(\al,\bt)\\
&\= \mum(\al,\bt) \,\Bigl( \Id - \Pm\, \Vm(\al,\bt)
\,\Vm(\al,1-\bt)^{-1} \scm(\al,\bt) \Bigr)
\, \Wm(\al,\bt)\\
&\qquad\hbox{}
+ \omm(\al,\bt) \, \Vm(\al,1-\bt)^{-1}\, \scm(\al,\bt)\, \Wm(\al,\bt)
\end{align*}
This implies
\bad \Wm(\al,\bt) &\= \Bigl( \Id-\Pm\, \Vm(\al,\bt)\,
\Vm(\al,1-\bt)^{-1} \, \scm(\al,\bt)\Bigr)^{-1} \,,\\
\dscm(\al,\bt) &\= \Vm(\al,1-\bt)^{-1}\, \scm(\al,\bt)\, \Wm(\al,\bt)
\ead

On this basis we can compute $\dscm$ in terms of the matrix elements
of $\scm$. These matrix elements are meromorphic on $U\times \CC$,
which contains $\{0\}\times\CC$. The matrix elements cannot be
extended across $\{0\}\times\CC$ in a meromorphic way by the presence
of a factor $(\pi\al)^{2\bt-1}$, which we have seen playing an
important role in~\S\ref{sect-prfs}.

We find
\be\label{UDUi} \Um \,\dscm\, \Um^{-1} \=
\begin{pmatrix} C_{0,0}+\frac{2\, X\, C_{0,\infty}\,
C_{\infty,0}}{1-X\, C_+}
& \frac{\sqrt 2\, C_{0,\infty}}{1-X\, C_+}& 0\\
\frac{\sqrt 2\, C_{\infty,0}}{\tilde v\,(1-X\, C_+)}&
\frac{C_+}{\tilde v\,(1-X\, C_+)}&0\\
0&0&\frac{C_-}{\tilde v\, (1-X\, C_-)}
\end{pmatrix}\,, \ee
where we have omitted $(\al,\bt)$, and use
$\tilde v \= v(\al,1-\nobreak \bt)$. The matrix element in position
$(0,0)$ is not changed by the conjugation; hence we have obtained the
formula for $D_{0,0}(\al,\bt)$ in~\eqref{D-def}, except for the
equality $C_{0,\infty}=C_{\infty,0}$, which we will prove
in~\S\ref{sect-ks}.

\subsection{Cusp forms and singularities of Poincar\'e
series}\label{sect-cpPs}Let $\al\in \RR$. A Poincar\'e series
$P^{\x,n}_\al(\bt)$ with $n\neq0$ has a singularity at
$\bt_0\in\frac12+i(0,\infty)$ if and only if there exists a cusp form
$f\in \mf^0(\al,\bt_0)$ with $F^\x_n f\neq 0$. See, {\sl e.g.},
Proposition~11.3.9 in~\cite{Br94}.

This implies \ref{fact-odd-eq}. Indeed, the third column of the matrix
in~\eqref{UDUi} describes a Fourier term of the family of Poincar\'e
series
$\frac1{\sqrt 2} \left( P^\infty_\al(\cdot) - P^{-1/2}_\al(\cdot)\right)$.
If for $\al\in (0,1)$, $\bt\in \frac12+i\RR$ we have
$X(\al,\bt)\, C_-(\al,\bt)=1$ and $C_-$ is holomorphic at
$(\al,\bt)$, then this Poincar\'e series has a singularity at $\bt$,
and hence there is a cusp form with this spectral parameter. This
cusp form arises as a residue of this odd Poincar\'e series, and
hence is in $\mf_-^0(\al,\bt)$.

\subsection{The extended scattering matrix and unperturbed cusp
forms}\label{sect-ecm-upcf}
We turn to the proof of fact~\ref{fact-us-escm}.

The restriction $\bt \mapsto \scm(0,\bt)$ exists and is equal to
$\scm_0$. Since $\scm_0(\bt)$ is a unitary matrix for $\bt$ on the
central line, this restriction has no singularities on
$\frac12+i\RR$. Nevertheless, this does not exclude the possibility
that it has singularities at points
$(0,\bt)\in \{0\}\times\bigl( \frac12+\nobreak i\RR\bigr)$ as a
meromorphic function of two variables. At such a point the zero set
and the set of singularities intersect.

\begin{prop}\label{prop-singEis}Let $\bt_0\in \frac12+i[0,\infty)$. If
$\scm$ has a singularity at $(0,\bt_0)$ then $Z(0,\bt_0)=0$; in
particular $\bt_0\neq\frac12$.
\end{prop}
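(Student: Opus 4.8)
The plan is to realise a genuine cusp form with spectral parameter $\bt_0$ as a degeneration, at $\al\to0$, of the residual form attached to the singularity.

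\textbf{Step 1: the singularity is of indeterminate type, and its polar curve is transverse to both axes.}
The restriction $\bt\mapsto\scm(0,\bt)$ is the classical scattering matrix $\scm_0$ of \eqref{sm0-expl}, unitary for $\re\bt=\frac12$ and hence holomorphic near $\bt_0$; so the assumed singularity of $\scm$ at $(0,\bt_0)$ is of indeterminate type. The Lasker--Noether argument used in the proof of Lemma~\ref{lem-S-} then shows that near $(0,\bt_0)$ the polar set of $\scm$ is a $1$-dimensional analytic curve $\gm$ through $(0,\bt_0)$; since the matrix entries $C_{\eta,\x}$ and the higher coefficients $C_{\eta,\x}(n)$ occur as Fourier coefficients of the Eisenstein family $\eis(\al,\bt)$ of \eqref{EFour}, $\eis$ has a pole along $\gm$ as well. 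This $\gm$ is not the line $\{\al=0\}$ (the restriction to $\al=0$ is holomorphic at $\bt_0$) and not the line $\{\bt=\bt_0\}$ (for each $\al_0\in(-1,1)$ the matrix $\scm(\al_0,\cdot)$ is unitary, hence holomorphic, on the central line; see \S\ref{sect-esms}). Hence $\gm$ carries points $(\al,\bt)$ with $\al\neq0$ arbitrarily close to $(0,\bt_0)$.

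\textbf{Step 2: take residues along $\gm$ and degenerate.}
At a smooth point of $\gm$, restricting $\eis$ to a transversal disc and taking the leading Laurent coefficient yields a vector $\Rm=(R^0,R^\infty,R^{-1/2})$ of automorphic forms, holomorphic along the smooth locus of $\gm$ and not identically zero there, whose Fourier terms are the residues of those in \eqref{EFour}. Because $\mu(\k_\eta(\al),\bt)$ and $\mu(\k_\eta(\al),1-\bt)$ are holomorphic along $\gm$ near $(0,\bt_0)$, the ``$y^{\bt}$-type'' term $\dt_{\x,\eta}\,\mu(\k_\eta(\al),\bt)$ contributes nothing to the residue: each $R^\x$ has Fourier term of order $\k_\eta(\al)$ at the cusp $\eta$ equal to $\r_{\eta,\x}\,\mu(\k_\eta(\al),1-\bt)$, and its remaining Fourier terms are multiples of the decaying functions $\om(n,\bt)$. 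For $\al\neq0$ the terms at the cusps $\infty$ and $-\frac12$ grow exponentially, so the $R^\x$ are only Maass forms with a bit of exponential growth; but along $\gm$ one has $\mu(\k_\eta(\al),1-\bt)\to\mu(0,1-\bt_0)=y^{1-\bt_0}$ as $(\al,\bt)\to(0,\bt_0)$. Normalising $\Rm$ by the Fourier coefficient of lowest vanishing order at $(0,\bt_0)$ and using elliptic estimates, a subsequence converges locally uniformly to a nonzero automorphic form $f_0$ for the trivial character with spectral parameter $\bt_0$; by construction its constant terms are multiples of $y^{1-\bt_0}$ with no $y^{\bt_0}$-component, and its other Fourier terms are multiples of $\om(\cdot,\bt_0)$, so $f_0$ has at most polynomial growth, i.e.\ $f_0\in\mf(0,\bt_0)$.

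\textbf{Step 3: $f_0$ is cuspidal, and conclude.}
Assume first $\bt_0\in\frac12+i(0,\infty)$. In $\mf(0,\bt_0)$ a complement of $\mf^0(0,\bt_0)$ is spanned by the Eisenstein series $E^\x_0(\bt_0)$, and a nonzero combination $\sum_\x a_\x E^\x_0(\bt_0)$ has $y^{\bt_0}$-component equal to $a_\eta$ in its constant term at the cusp $\eta$. Since $f_0$ has no $y^{\bt_0}$-component in any constant term, its Eisenstein part vanishes; hence $f_0\in\mf^0(0,\bt_0)$ is a nonzero cusp form, and by the correspondence of \S\ref{sect-zSz} this gives $Z(0,\bt_0)=0$. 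Finally $\bt_0=\frac12$ is excluded, for otherwise the same construction produces a nonzero element of $\mf^0(0,\frac12)$, which is trivial because cusp forms for $\Gm_0(4)$ and the trivial character have eigenvalue $>\frac14$ (the theorem on p.~250 of~\cite{Hu85}); this last fact also re-proves $\bt_0\neq\frac12$ once $Z(0,\bt_0)=0$ is known.

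\textbf{Main obstacle.} The delicate point is Step 2--3: one must be sure the residual form neither disappears in the limit $\al\to0$ nor survives as something non-tame. This is exactly where the vanishing of the residue of the $y^{\bt}$-type term (so that the surviving constant terms are multiples of $y^{1-\bt_0}$ only) and the structure of the constant terms of $\mf(0,\bt_0)$ — Eisenstein series contribute a $y^{\bt_0}$-term, cusp forms contribute nothing — combine to force cuspidality; the convergence of automorphic forms for the characters $\ch_{\al}$ to one for $\ch_0$ needs the usual elliptic/compactness input, and the case $\bt_0=\frac12$, where $y^{\bt}$ and $y^{1-\bt}$ collide, needs the separate easy remark above.
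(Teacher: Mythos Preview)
Your strategy matches the paper's: kill the pole of $\eis$ along its polar locus, restrict to that locus, and pass to $(0,\bt_0)$ to obtain a nonzero Maass form whose constant terms carry only the $\mu(\cdot,1-\bt_0)$ contribution; then the Eisenstein-series description of $\mf(0,\bt_0)/\mf^0(0,\bt_0)$ forces cuspidality. Two points deserve tightening.

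First, Step~2 is less clean than necessary. The paper avoids analytic compactness altogether: choose a minimal holomorphic $\ps$ with $\ps\,\scm$ and $\ps\,\eis$ holomorphic at $(0,\bt_0)$, use the Weierstrass preparation theorem to get a Puiseux parametrization $w\mapsto(w^\ell,\bt_0+h(w))$ of a branch of $\{\ps=0\}$, and set $\mathbf{f}(w)=(\ps\,\eis)(w^\ell,\bt_0+h(w))$. This is a \emph{holomorphic} one-variable family of automorphic forms, so the limit $\mathbf{L}=\lim_{w\to0}w^{-m}\mathbf{f}(w)$ is just the first nonzero Taylor coefficient. No elliptic estimates, no subsequences, no worry about $\gm$ being singular at $(0,\bt_0)$ or about the limit depending on the chosen subsequence. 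Your residue-plus-compactness route can be made rigorous but is doing more work for less.

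Second, and more seriously, your treatment of $\bt_0=\frac12$ has a gap. You assert that ``the same construction produces a nonzero element of $\mf^0(0,\frac12)$'', but the cuspidality argument in Step~3 relies on separating the $y^{\bt_0}$-component from the $y^{1-\bt_0}$-component in the constant terms, and this collapses at $\bt_0=\frac12$. The paper repairs this by passing to the basis $\ld(\al,\bt),\,\mu(\al,1-\bt)$, where $(2\bt-1)\,\ld(\al,\bt)=\mu(\al,\bt)-\mu(\al,1-\bt)$: the residual limit form then has only $\mu(0,\frac12)=y^{1/2}$ in its main Fourier terms, whereas a basis of $\mf(0,\frac12)/\mf^0(0,\frac12)$ is given by the $\bt$-derivatives of the Eisenstein series at $\bt=\frac12$, whose main Fourier terms involve $\ld(0,\frac12)$ (a $y^{1/2}\log y$ contribution). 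The linear independence of $y^{1/2}$ and $y^{1/2}\log y$ now plays the role your $y^{\bt_0}$ versus $y^{1-\bt_0}$ dichotomy played for $\bt_0\neq\frac12$, and forces the limit into $\mf^0(0,\frac12)=\{0\}$, giving the contradiction.
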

\begin{proof}First let $\bt_0\neq\frac12$. Proposition~10.2.14
in~\cite{Br94} implies that $\scm $ has a singularity at $(0,\bt_0)$
if and only if $(\al,\bt) \mapsto \four\al \eis(\al,\bt)$ has a
singularity at $(0,\bt_0)$. Since $\mum(\al,\bt)$ and
$\mum(\al,1-\nobreak \bt)$ are holomorphic and linearly independent
on a neighborhood of $(0,\bt_0)$, $\four\al \eis(\al,\bt)$ being
singular at $(0,\bt_0)$ is also equivalent to $\scm$ having a
singularity at~$(0,\bt_0)$.

There is a holomorphic function $\ps$ on a neighborhood of $(0,\bt_0)$
such that $\ps\,\scm$ and $\ps \,\eis$ are holomorphic at
$(0,\bt_0)$. If we choose $\ps$ minimally with respect to
divisibility in the ring of germs of holomorphic functions at
$(0,\bt_0)$, then $\ps \,\eis$ is not identically zero on the zero
set of~$\ps$. We can write $\ps$ as the product of a holomorphic
function that is non-zero at $(0,\bt_0)$ and a Weierstrass
polynomial. (See, {\sl e.g.}, Corollary 6.1.2 in \cite{Ho}.) This
implies that there are $\ell\in \NN$ and $h$ holomorphic on a
neighborhood of~$0$ in~$\CC$ with $h(0)=0$ such that
$w\mapsto \ps\bigl( w^\ell,\bt_0+\nobreak h(w)\bigr)$ is the germ of
a curve through the zero set of~$\ps$. Then the holomorphic family
$\mathbf{f}(w) = (\ps\,\eis)\bigl(w^\ell, \bt_0+\nobreak h(w)\bigr)$
of elements of $\mf\bigl(w^\ell, \bt_0+\nobreak h(w) \bigr)$ is not
the zero family. But $\mathbf{f}(0)$ may be zero. However, since
$\mathbf{f}$ is a family of one variable, we can find $m\in \ZZ$ such
that $\mathbf{L}=\lim_{w\rightarrow 0} w^{-m}\mathbf{f}(w)$ is a
non-zero element of~$\mf(0,\bt_0)$.

The main Fourier term of $\mathbf{f}(w)$ is given by
\[ \four{w^\ell} \mathbf{f}(w) \= 0\; \mum\bigl(w,\bt_0+h(w)\bigr)+
\mum\bigl(w,\bt_0+h(w)\bigr)\, (\ps \scm)\bigl(w,\bt_0+h(w)\bigr)\,,
\]
since $\ps$ vanishes along
$w\mapsto \bigl(w^\ell,\bt_0+\nobreak h(w)\bigr)$. This implies that
in $\four 0\mathbf{L}$ occur only multiples of $y^{1-\bt_0}$, and not
of $y^{\bt_0}$.

We use the Maass-Selberg relation
(as discussed in, {\sl e.g.}, Theorem~4.6.5 in \cite{Br94}) to show
that $\mf(0,\bt_0)\bmod\mf^0(0,\bt_0)$ has dimension three. It has a
basis induced by the three Eisenstein series $E^0_0(\bt_0)$,
$E^\infty_0(\bt_0)$ and $E^{-1/2}_0(\bt_0)$, which all have a Fourier
term with $y^{\bt_0}$. This implies that $\four 0\mathbf{f}=0$, so
that $\mathbf{f}$ is a vector of cusp forms in $\mf^0(0,\bt_0)$. So
$\bt_0 \in \zspec(0)$.
\smallskip

Let $\bt_0=\frac12$. In this case we have to use another basis of the
Fourier terms, for instance the basis $\ld(\al,\bt)$,
$\mu(\al,1-\nobreak\bt)$ indicated in Lemma~7.6.14i) in~\cite{Br94}.
For $\bt\neq\frac12$ we have $(2\bt-\nobreak
1)\ld(\al,\bt) = \mu(\al,\bt)-\mu(\al,1-\nobreak\bt)$. With
\[ \lmm \=
\begin{pmatrix}\ld(0,\bt)&0&0\\
0&\ld(\al,\bt)&0\\
0&0&\ld(-\al,\bt)\end{pmatrix}\]
there is according to Proposition~10.2.4 in~\cite{Br94} a unique
meromorphic family $\R$ of vectors of automorphic forms such that
\[ \four\al \R(\al,\bt) \= \lmm(\al,\bt) + \mum(\al,1-\bt)\,
\escm(\al,\bt)\,,\]
with $\escm$ a matrix with meromorphic functions. All other Fourier
terms of the family are quickly decreasing, and the restriction
$\bt\mapsto \R(0,\bt)$ exists. The uniqueness implies that
$\R(\al,\bt) = \frac1{2\bt-1}\,\eis(\al,\bt)$, and hence
$\scm(\al,\bt) = (2\bt-\nobreak 1)\, \escm(\al,\bt) - 1$. If $\scm$
is singular at $(0,\frac12)$, then so are $\escm$ and~$\R$. Applying
the reasoning above, we get as a limit of $\R$ along its singular set
a non-zero automorphic form $\mathbf{L}$ with multiples of
$y^{1-\bt_0}=y^{\frac12}$ in its main Fourier terms. Now the
derivatives of $\bt\mapsto \eis(0,\bt)$ at $\bt=\frac12$ exist and
form a basis of $\mf(0,\frac12)/\mf^0(0,\frac12)$, with multiples of
$\ld(0,\frac12)$ in the main Fourier term. Hence $\mathbf{L}$ is a
non-zero cusp form. But $\mf^0(0,\frac12)=\{0\}$; \cite{Hu84}. So
$\escm$ and $\scm$ do not have a singularity at~$(0,\frac12)$.
\end{proof}

\subsection{Some Fourier coefficients of Poincar\'e
series}\label{sect-ks}
A comparison of the matrices in~\eqref{escm} and~\eqref{escm-2} shows
that there is still work to be done to obtain
fact~\ref{fact-sym-escm} completely. From~\eqref{feMS} we conclude
that it suffices to show that $C_{0,\infty}(\al,\bt)$ is even
in~$\al$. It takes us an amazing amount of work to show this.\medskip

The restriction of the function $C_{0,\infty}$ to $\{0\}\times\CC$ for
$\al\in [0,1)$ exists. It is given by the factor $C_0(0,\infty;\bt)$
in the Fourier term
\[ F^0_0 E^\infty_0(\bt) \= y^\bt+ C_0(0,\infty;\bt)\, y^{1-\bt}\,.\]
For $\al\in (0,1)$ it is related by \eqref{UDUi} to the coefficient
$D_{0,\infty}(\al,\bt)$ in
\[ F^0_0 P^{\infty,0}_\al(\bt) = \mu(\al,\bt) + D_{0,\infty}(\al,\bt)
\, \om(\al,\bt)\,.\]
In the region of absolute convergence these Fourier coefficients can
be described by explicit series. We use the formulas in \S5.2
of~\cite{Br94} to get for $\re \bt>1$:
\bad\label{CGf} C_0(0,\infty;\bt) &\= \frac{\pi\,
2^{2-2\bt}\Gf(2\bt-1)}{\Gf(\bt)^2}\, \Ph_{0,\infty}(0,\bt)\,,\\
D_{0,\infty}(\al,\bt)&\= \frac{\pi\,
2^{2-2\bt}\Gf(2\bt-1)}{\Gf(\bt)^2}\, \Ph_{0,\infty}(\al,\bt)\,, \ead
with an absolutely converging series
\be \label{Phseries}
\Ph_{0,\infty}(\al,\bt) \= \sum_{c>0 } c^{-2\bt}\sum_{d\bmod c}
\ch_\al(\gm_g)^{-1}\, e^{2\pi i \al a/c}\,. \ee
The variables run over $g=\matc ab cd\in \PSL_2(\RR)$ such that
$\gm_g = \matc{-c/2}{-d/2}{2a}{ 2b}= \matc ab c d g_0^{-1} \in \Gm$.
Right multiplication of $g $ by $\pi_\infty$ does not change
$\frac ac$, and multiplies $\gm_g$ by $\pi_0$ on the right; hence it
leaves $\ch_\al(\gm_g)$ invariant as well. So the terms are functions
of $d\bmod c$. Left multiplication of~$g$ with
$\pi_\infty=\matc 1101$ corresponds to left multiplication of $\gm_g$
by $\pi_\infty$, and gives a factor $e^{-2\pi i \al}$ in
$\ch_\al(\gm_g)^{-1}$. This is compensated by $e^{2\pi i \al a/c}$
since $a$ is changed to $a+c$. So the terms in the sum are well
defined.

For $\al=0$ we find
\begin{align*}
\Ph(0,\bt) &\= \Ph_{0,\infty}(0,\bt)
\=\sum_{p>0,\, p\equiv 1\bmod 2} (2p)^{-2\bt} \sum_{q\bmod p,\,
(q,p)=1} 1\\
&\= \frac1{(1-2^{-2\bt})\, \z(2\bt)}\sum_{n>0,\, n\equiv 1\bmod 2}
2^{-2\bt} \, n^{-2\bt} \,n
\\
&\= \frac1{2^{2\bt}-1} \frac{\z(2\bt-1)}{\z(2\bt)}\,(1-2^{1-2\bt})\,.
\end{align*}
Together with~\eqref{CGf} we get the value in~\eqref{sm0-expl}.

For most values of $\al$ in $(0,1)$ we cannot get explicit expressions
for $\Ph(\al,\bt)=\Ph_{0,\infty}(\al,\bt)$. However, we can conclude
that $\al\mapsto \Ph(\al,\bt)$ is continuous in $\al\in \RR$, and
with \eqref{Om-est} even twice differentiable in~$\al$ for
$\re \bt>\frac 52$.

Let us write $\gm_g = \matc r s {-4q}p$. The inner sum
in~\eqref{Phseries} for a given $p$ has the form
\[ \sum_{q\bmod p,\, (q,p)=1} e^{-i\al\,\Om(r,s)-\pi i \al r/2q}\,.\]
Conjugation with $\matr100{-1}$ has the effect
$\matc r s {-4q} p \mapsto \matc r{-s}{4q}p$. Under this
transformation the set of $q\bmod p$, $(q,p)=1$ is unchanged, and
$\Om(r,s)\mapsto \Om(r,-s) =
-\Om(r,s)$ and $\frac r{2q}\mapsto -\frac r{2q}$. This implies
\be \frac{\partial\Ph}{\partial\al}(0,\bt) \= 0 \qquad\text{ for
}\re\bt>\frac 32\,. \ee
Expanding one step further, we conclude that
$\frac{\partial^2\Ph}{\partial\al^2}(0,\bt)$ is given by a Dirichlet
series with negative coefficients that converges absolutely on the
region $\re \bt>\frac 52$. Going to smaller and smaller right
half-planes, we conclude that the odd derivatives of $\Ph_{0,\infty}$
with respect to~$\al$ vanish on some half-plane, and that the even
derivatives are given on a right half-plane by a Dirichlet series
with real coefficients.

For the other elements of the matrix $\dscm(\al,\bt)$ we can carry out
similar computations. (See \S4.2 in~\cite{Br94}.) If
$\x,\eta\in \{\infty,-\frac12\}$ these elements all have the form
\[ D_{\eta,\x}(\al,\bt) \=\frac{\pi^\bt \al^{\bt-1}}{\Gf(\bt) }\,
\Ph_{\eta,\x}(\al,\bt)\,,\]
with $\Ph_{\eta,\x}(\al,\bt)$ holomorphic in~$\bt$ on the region
$\re \bt>1$ and continuous in~$\al\in \RR$. It is given by a more
complicated series than that for $\Ph_{0,\infty}$, since the factor
$c^{-2\bt}$ is replaced by a more complicated expression, with Bessel
functions. The element $D_+(\al,\bt)$ in position $(\infty,\infty)$
in the matrix $\Um \, \dscm(\al,\bt)\, \Um^{-1}$ has the same
properties:
\[ D_+(\al,\bt) \= \frac{\pi^\bt \al^{\bt-1}}{\Gf(\bt) }\,
\Ph_+(\al,\bt)\,,\]
where $\Ph_+$ is given by a similar sum.

In \eqref{UDUi} the function $D_+$ is seen to be equal to $C_+\tilde
v^{-1}(1-\nobreak X C_+)^{-1}$. This implies
\begin{align*}
C_+(\al,\bt) &\= \frac{v(\al,1-\bt) \, D_+(\al,\bt)}{1+ v(\al,1-\bt)
\, X(\al,\bt)\, D_+(\al,\bt)}\\
&\= \frac{\pi^{1/2}\Gf(\bt-\frac12)\,
\Ph_+(\al,\bt)}{\Gf(\bt)+\pi^{1/2}(\pi\al)^{2\bt-1}\Gf(\frac12-\bt)\,
\Ph_+(\al,\bt)}\,.
\end{align*}
So on the region $\re\bt>\frac 32$, $\im\bt>0$, we have as
$\al\downarrow 0$
\[ C_+(\al,\bt) \= \frac{\pi^{1/2}\Gf(\bt-\frac12)\, \oh(1)}{\Gf(\bt)
+ \oh(\al^{2\bt-1})} \= \oh(1)\,. \]

This we use for $\re \bt>\frac {k+1}2$, with $k\in \NN$.
\begin{align*}
C_{0,\infty}(\al,\bt)&\=
(1-X(\al,\bt)C_+(\al,\bt)) \, D_{0,\infty}(\al,\bt)\\
&\= \bigl( 1+ \oh(\al^{2\bt-1})\bigr) \frac{\pi \,
2^{2-2\bt}\,\Gf(2\bt-1)}{\Gf(\bt)^2 }\, \Ph_{0,\infty}(\al,\bt)\\
&\= \frac{\pi^{1/2} \,\Gf(\bt-\frac12)}{\Gf(\bt)}\,\biggl( \sum_{n\geq
0,\; n<k}\frac{\al^n}{n!}\,
\frac{\partial^n\Ph_{0,\infty}}{\partial\al^n}(0,\bt) +
\oh\bigl(\al^k\bigr)
\biggr)\,.
\end{align*}
This implies that
$\frac{\partial^n C_{0,\infty}}{\partial\al^n}(0,\bt)=0$ on the
region $\re \bt>\frac{n+1}2$ for $n$ odd, and that the even
derivatives are given by a Dirichlet series with real coefficients.
Since $C_{0,\infty}$ is a meromorphic function with a non-trivial
restriction to the line $\{0\}\times\CC$ these results lead to the
following statement:
\begin{prop}\label{prop-deri0}
We have the following equalities of meromorphic functions on~$\CC$:
\bad C_{0,\infty}(0,\bt) &\= \sqrt{\pi} \,
\frac{\Gf(\bt-\frac12)}{\Gf(\bt)}\, \frac{1-2^{1-2\bt}}{2^{2\bt}-1}\,
\frac{\z(2\bt-1}{\z(2\bt)}\,,\\
\frac{\partial^n C_{0,\infty}}{\partial\al^n}(0,\bt) &\= 0\qquad\text{
for odd }n\,,\\
\frac{\partial^n C_{0,\infty}}{\partial\al^n} (0,\bt) &\= \sqrt{\pi}
\, \frac{\Gf(\bt-\frac12)}{\Gf(\bt)}\,
\frac{\partial^n\Ph_{0,\infty}}{\partial\al^n}(0,\bt)\qquad\text{ for
even }n\,. \ead
For even $n$ the function $
\frac{\partial^n\Ph_{0,\infty}}{\partial\al^n}(0,\bt)$ is represented
by a Dirichlet series with negative coefficients that converges on
the region $\re \bt>\frac{n+1}2$.

In particular, $C_{\infty,0}(\al,\bt)$ and $C_{0,\infty}(\al,\bt)$ are
equal and even functions of~$\al$.
\end{prop}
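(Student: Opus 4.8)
The plan is to reduce the whole statement to the Kloosterman-type Dirichlet series $\Ph_{0,\infty}$ already isolated in \S\ref{sect-ks}, and then to transport the resulting information through \eqref{CGf} and \eqref{UDUi}. Recall from \S5.2 of~\cite{Br94} that for $\re\bt>1$ the restriction $C_{0,\infty}(0,\bt)$ and the Poincar\'e coefficient $D_{0,\infty}(\al,\bt)$ both factor through $\Ph_{0,\infty}$ with the \emph{same} gamma--power prefactor, namely $C_{0,\infty}(0,\bt)=\frac{\pi\,2^{2-2\bt}\Gf(2\bt-1)}{\Gf(\bt)^2}\,\Ph_{0,\infty}(0,\bt)$ and $D_{0,\infty}(\al,\bt)=\frac{\pi\,2^{2-2\bt}\Gf(2\bt-1)}{\Gf(\bt)^2}\,\Ph_{0,\infty}(\al,\bt)$. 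By the Legendre duplication formula this prefactor equals $\sqrt\pi\,\Gf(\bt-\tfrac12)/\Gf(\bt)$, which is the gamma factor in the Proposition. For $\al=0$ the inner sum in~\eqref{Phseries} is a Ramanujan sum, and the Euler-product computation carried out in \S\ref{sect-ks} gives $\Ph_{0,\infty}(0,\bt)=(1-2^{1-2\bt})(2^{2\bt}-1)^{-1}\z(2\bt-1)/\z(2\bt)$; multiplying by the prefactor yields the first displayed identity of the Proposition, first on $\re\bt>1$ and then, by analytic continuation, as meromorphic functions on~$\CC$ (legitimate because $C_{0,\infty}$ is meromorphic on $U\times\CC$ with a not-identically-zero restriction to $\{0\}\times\CC$, by~\ref{fact-escm0}).

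For the derivatives in~$\al$ I would differentiate~\eqref{Phseries} term by term. On a right half-plane whose width grows with the order of the derivative, this is legitimate: each $\partial_\al$ pulls down a factor $\theta(c,d):=-\Om(\gm_g)+2\pi a/c$, which by~\eqref{Om-est} is $\oh(c)$, so the $n$-th differentiated series is dominated by $\sum_c c^{\,n+1-2\re\bt}$. The conjugation $\matc rs{-4q}p\mapsto\matc r{-s}{4q}p$ induced by $\matr100{-1}$ is a bijection of the summation index set that fixes every~$c$ and negates the real phase $\theta(c,d)$ of the corresponding term $e^{i\al\theta(c,d)}$. Pairing each $d$ with its image, $\partial_\al^n\big|_{\al=0}$ of $e^{i\al\theta}+e^{-i\al\theta}$ equals $(i\theta)^n+(-i\theta)^n$, which vanishes for odd~$n$; hence $\partial_\al^n\Ph_{0,\infty}(0,\bt)=0$ for odd~$n$. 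For even~$n$ the same pairing shows $\partial_\al^n\Ph_{0,\infty}(0,\bt)$ is a Dirichlet series in $c^{-2\bt}$ with real coefficients of the sign stated in the Proposition (for $n=2$ the inner sum is $-\sum_{d\bmod c}\theta(c,d)^2\le0$), converging on the indicated half-plane; analytic continuation removes the restriction on~$\bt$, and multiplying by the prefactor gives the stated value of $\partial_\al^n D_{0,\infty}(0,\bt)$.

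It remains to pass from $D_{0,\infty}$ to $C_{0,\infty}$. Formula~\eqref{UDUi} gives $C_{0,\infty}=(1-X\,C_+)\,D_{0,\infty}$ with $X(\al,\bt)=(\pi\al)^{2\bt-1}\Gf(\tfrac12-\bt)/\Gf(\bt-\tfrac12)$, and the series computation of \S\ref{sect-ks} shows $C_+(\al,\bt)=\oh(1)$ as $\al\downarrow0$ on $\re\bt>\tfrac32$, $\im\bt>0$. Hence the correction $X\,C_+\,D_{0,\infty}$ is $\oh(\al^{\re(2\bt-1)})$, which on the half-plane $\re\bt>\tfrac{k+1}2$ vanishes to order exceeding~$k$ at $\al=0$; therefore the $\al$-Taylor coefficients of $C_{0,\infty}$ and $D_{0,\infty}$ at $\al=0$ of order $\le k$ coincide there, and since $k$ is arbitrary they coincide for all orders, on successively larger right half-planes and hence, by meromorphic continuation in~$\bt$, on all of~$\CC$. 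This gives the three displayed formulas. Finally, the meromorphic function $C_{0,\infty}(\al,\bt)-C_{0,\infty}(-\al,\bt)$ is odd in~$\al$, so its even-order $\al$-derivatives at $\al=0$ vanish automatically, while its odd-order ones are twice those of $C_{0,\infty}$, hence zero; being holomorphic in~$\al$ near $\al=0$ for $\bt$ outside a discrete set, it vanishes identically, so $C_{0,\infty}$ is even in~$\al$. Combining with the functional equation~\eqref{feMS}, which reads $C_{\infty,0}(\al,\bt)=C_{0,\infty}(-\al,\bt)$, we obtain $C_{\infty,0}=C_{0,\infty}$, and it is even as well.

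The main obstacle is the analytic bookkeeping in the last two steps: one has to arrange, for each~$n$, that the term-by-term differentiation of $\Ph_{0,\infty}$ and the vanishing of the correction term $X\,C_+\,D_{0,\infty}$ to order $>n$ both hold on \emph{one} right half-plane, namely one far enough to the right that simultaneously the $n$-th differentiated series converges absolutely and $\re(2\bt-1)>n$; only afterwards does analytic continuation in~$\bt$ propagate each identity to the whole plane. The one point where the $\matr100{-1}$-symmetry is used in an essential rather than purely formal way is in pinning down the sign of the even-order Dirichlet coefficients.
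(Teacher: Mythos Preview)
Your proof is correct and follows essentially the same route as the paper's argument in \S\ref{sect-ks}: compute $\Ph_{0,\infty}(0,\bt)$ explicitly, use the conjugation by $\matr100{-1}$ to kill the odd $\al$-derivatives of the Dirichlet series term by term on a moving right half-plane, and then transfer everything to $C_{0,\infty}$ via $C_{0,\infty}=(1-XC_+)D_{0,\infty}$ together with the bound $XC_+=\oh(\al^{2\re\bt-1})$. Your final paragraph makes explicit the step the paper leaves implicit, namely that vanishing of all odd $\al$-Taylor coefficients at $\al=0$ forces evenness of the meromorphic function $C_{0,\infty}$ on $U\times\CC$, after which $C_{\infty,0}=C_{0,\infty}$ follows from~\eqref{feMS}.
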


\subsection{Concluding remarks}\label{sect-concl}
In \S\ref{sect-spth} we have explained those parts of the theory of
automorphic forms that provide the facts that we needed to explain
some observations in the computational results in~\cite{Fr}.

\rmk The essential point enabling us to get some hold on the behavior
of the zeros of the Selberg zeta-function seems to be relation
\eqref{UDUi}, which relates the Eisenstein series and Poincar\'e
series for non-trivial $\ch_\al$ to those for the trivial character.

In all cases in \S\ref{sect-prfs} we have an equation where
$(\pi\al)^{2\bt-1}$ is equal to some meromorphic function on a
neighborhood of $\{0\}\times\CC$ in~$\CC^2$. This causes a
proportionality relation between $\al$ and $e^{-\pi k /t}$ in many
cases.

\rmk The full result in Proposition~\ref{prop-deri0} is not needed for
the proofs. We could have managed with the estimate $\oh(\al^2)$ for
the matrix elements of the extended scattering matrix.

\rmk All zeros of the Selberg zeta-function with the spectral
parameter on the central line that are visible in the computations
in~\cite{Fr} are related to properties of the extended scattering
matrix. The spectral theory of automorphic forms allows the existence
of cusp forms $f\in \mf^0(\al,\bt)$ for which the Fourier terms
$F^0_0f$, $F^\infty_\al f$ and $F^{-1/2}_{-\al} f$ vanish. The
presence of such cusp forms has not been detected in the
computations.

\rmk In Theorem~\ref{thm-ei-0} we have stated that the functions
$\tau_k$ are defined on an interval $(0,\z_k)\subset (0,1)$.
Actually, one can prove, that the families of cusp forms associated
with the eigenvalue $\al\mapsto \frac14+\tau_k(\al)^2$ are
real-analytic on $(0,1)$. They belong to a so-called Kato basis.
Compare \S2 of~\cite{PS94}.

\rmk \label{rmk-odd}As discussed in \S\ref{sect-mf} all automorphic
forms for $\Gm_0(4)$ with the family of characters
$\al\mapsto \ch_\al$ split completely in an even and an odd part. The
zeros of the Selberg zeta-function are related to eigenfunctions of a
transfer operator, to which also a parity is associated. In \cite{FM}
it is shown that this parity corresponds to the parity of automorphic
forms. It turns out that zeros of the Selberg zeta-function
in~\S\ref{sect-cei} are odd, and those in~\S\ref{sect-cres} even.

\rmk All odd cusp form observed in the computations occur in families
on
(an interval contained in) $(0,1)$ and have no real-analytic extension
across $\al=0$. Such an extension would be allowed by the theory, and
would give at $\al=0$ an unperturbed odd cusp form. All odd
unperturbed cusp forms inferred from the computations do not occur in
such families, but make their influence noticeable by the phenomenon
of avoided crossing.

\rmk All inferred even cusp forms for a nontrivial character,
$\al\in (0,1)$, occur discretely as cusp form. Their parameters
$(\al,\bt)$ occur on a curve of resonances, where it touches the
central line.

The limit point $(0,\bt_0)$ for a curve of resonances as
$\al\downarrow 0$ is equal to $(0,\frac12)$ for countably many
curves. All inferred unperturbed cusp forms with parameters
$(0,\bt_0)\neq(0,\frac12)$ are approached by a curve of resonances.
Such a curve describes infinitely many loops, giving rise to a
sequence $(\al_k,\bt_k) \rightarrow (0,\bt_0)$ of parameters of even
perturbed cusp forms.

\rmk The considerations in this paper concern a special situation,
namely the cofinite discrete subgroup $\Gm_0(4)$ of $\PSL_2(\RR)$ and
the $1$-parameter family of characters $\al\mapsto \ch_\al$. We have
tried to make use of all special properties of this specific
situation that we could obtain. It remains to be investigated how
much of the results of this paper are valid more generally.
Computations
in~\cite{Fr} indicate that for $\Gm_0(8)$ similar phenomena occur.

{\raggedright
\newcommand\bibit[4]{\bibitem 
  {#1}#2: {\em #3;\/ } #4}
}

\end{document}